\DeclareMathAlphabet\mathbfcal{OMS}{cmsy}{b}{n}
\newcommand*\circled[1]{\tikz[baseline=(char.base)]{
            \node[shape=circle,draw,inner sep=2pt] (char) {#1};}}
 \def\N{\mathbb{N}}
 \def\R{\mathbb{R}}
 \def\S{\mathbb{S}}
 \def\cK{\mathcal{K}}
\def\bcK{{\mathbfcal{K}}}
\def\bG{{\bm{G}}} 
\def\bz{{\bm{z}}} 
\def\bcM{{\mathbfcal{M}}}
\def\bcN{{\mathbfcal{N}}}
\def\bcC{{\mathbfcal{C}}}
\def\bcD{{\mathbfcal{D}}}
\newcommand{\cM}{\mathcal M}
\newcommand{\bc}{\mathbf c}
\newcommand{\be}{\mathbf e}
\newcommand{\bp}{\mathbf p}
\newcommand{\br}{\mathbf r}
\newcommand{\bv}{\mathbf v}
\newcommand{\bw}{\mathbf w}
\newcommand{\bN}{\mathbf N}
\newcommand{\bQ}{\mathbf Q}
\newcommand{\bR}{\mathbf R}
 \newcommand{\dps}{{\displaystyle }}
 \newtheorem{theorem}{Theorem}
 \newtheorem{proposition}[theorem]{Proposition}
 \newtheorem{lemma}[theorem]{Lemma}
 \newtheorem{remark}[theorem]{Remark}
\newcommand{\mybox}[1]{
\fbox{%
\parbox{\textwidth}{
#1
}%
}\medskip

}
\definecolor{cadmiumgreen}{rgb}{0.0, 0.42, 0.24}
\DeclareMathOperator*{\argmin}{arg\,min}
\author[1,2]{Robert Benda}
\author[1]{Eric Canc\`es}
\author[1]{Virginie Ehrlacher}
\author[3]{Benjamin Stamm}
\affil[1]{CERMICS, Ecole des Ponts and Inria Paris, 6 \& 8 avenue Blaise Pascal, 77455~Marne-la-Vall\'ee, France}
\affil[2]{LPICM, CNRS, Ecole Polytechnique, Institut Polytechnique de Paris, Route~de~Saclay, 91128 Palaiseau, France}
\affil[3]{ACoM, Department of Mathematics, RWTH Aachen University, Schinkelstrasse 2, D-52062 Aachen, Germany}
 \title{Multi-center decomposition of molecular densities: \\ a mathematical perspective}
\begin{document}

\maketitle
\begin{abstract} 
The aim of this paper is to analyze from a mathematical perspective some existing schemes to partition a molecular density into several atomic contributions, with a specific focus on Iterative Stockholder Atom (ISA) methods. We provide a unified mathematical framework to describe the latter family of methods and propose a new scheme, named L-ISA (for linear approximation of ISA). We prove several important mathematical properties of the ISA and L-ISA minimization problems and show that the so-called ISA algorithms can be viewed as alternating minimization schemes, which in turn enables us to obtain new convergence results for these numerical methods. Specific mathematical properties of the ISA decomposition for diatomic systems are also presented. We also review the basis-space oriented Distributed Multipole Analysis method, the mathematical formulation of which is also clarified. Different schemes are numerically compared on different molecules and we discuss the advantages and drawbacks of each approach.   
\end{abstract}
\tableofcontents

\section{Introduction}

A successful study of intermolecular interactions is closely interwoven with the quality of the representation of the electronic density and subsequently of the electrostatic potential (ESP) around molecules. When a continuous representation of the density, such as in Density Functional Theory (DFT) \cite{hohenberg1964inhomogeneous,Kohn1965} or Wave-Function Theory (WFT) \cite{jensen2017introduction}, cannot be used because of a too high computational burden, the density has to be summarized by a finite number of parameters thanks to \textit{localization schemes}. 
Classical force fields, such as GROMOS \cite{Scott1999}, CHARMM \cite{MacKerell1998} or AMBER \cite{Pearlman1995,Cornell1995}, among others, use such a representation, with (fixed) so-called partial charges assigned to all atomic sites, to represent the electronic density in a simplified way and account for electrostatic interactions with neighboring molecules. Molecular dynamics (MD) simulations based on these force fields are commonly used to gain insight into the structural or energetic properties of biologic or organic compounds.
The physical relevance of MD studies heavily relies on their quality, and thus on the specific localization scheme used to compute them. 
The purpose of the present paper is precisely to review several localization schemes from a mathematical perspective, providing a sound mathematical basis to the latter. Let us first review the existing localization schemes that have been used to derive atomic partial charges or atomic multipole moments in the context of force field parameterization. 

First, a popular method to derive partial charges is to fit the ESP generated by the set of atomic point charges to the exact quantum ESP (derived from DFT or WFT calculations) at a given geometry \cite{Singh1984}, possibly imposing additional constraints to lower the conformational dependence of the obtained optimal partial charges, such as in the RESP method \cite{Bayly1993}. Adding constraints to reproduce the total dipole moments at different conformations is also possible \cite{Reynolds1992}. So is to fit the ESP generated by the atomic charges to the approximate ESP generated by a series of distributed multipole moments \cite{Stone1981,Stone1985}, as implemented \textit{e.g.} in the Mulfit code \cite{ferenczy1991charges}. In both cases, the partial charges are chosen as those leading to the smallest error on the ESP for a suitable norm. 
This family of ESP-derived charges methods amounts to find indirectly the partial charges from the quantum calculation, using the electrostatic potential as a proxy -- and possibly the molecular dipole or quadrupole moments -- as relevant target. However, they are associated to several drawbacks, such as conformational-dependent charges -- although strategies to mitigate this undesired feature have been designed \cite{Reynolds1992} -- and lack of transferability. Chipot \textit{et al.} have for instance shown that ESP derived partial charges are not transferable from one to another similar molecule \cite{chipot1993}. In other words, the partial charge assigned to a carbon atom in a given specific local chemical environment of a given molecule cannot be used as partial charge for a carbon atom in a similar local chemical environment of another molecule.
It is also possible to derive atomic partial charges directly from a quantum calculation (namely, using the coefficients of the converged density matrix) using \textit{e.g.} Mulliken \cite{Mulliken1955} or L\"owdin \cite{Loewdin1953} population analysis, although these methods are not commonly used for partial charges parameterization -- or, if they are, charges are readjusted afterwards by fitting to the ESP, to minimize the deviation between the reference QM ESP and the atomic charges-generated ESP. Indeed, Mulliken and L\"owdin schemes adopt a basis space approach, which results in significant basis-set dependency of partial charges and lack of convergence with increasing basis set size. 

Real-space approaches, using a Voronoi-like partitioning of the space, such as the Bader Atom In Molecules \cite{Bader1994}, the Becke space-decomposition (fuzzy Voronoi cells) approach \cite{Becke1988a}, or stockholder partitionning schemes originating from the Hirshfeld method \cite{hirshfeld1977xvii} (see below), have also been used to derive partial charges, and do not suffer as much from basis set dependence -- but may suffer other drawbacks \cite{heidar2017information}. 

Finally, thanks to the recent availability of large databases of molecules already parameterized for classical force fields, modern machine-learning inspired methods have also been used to estimate partial charges, \textit{e.g.} formulating the charge assignment problem as a multiple-choice knapsack problem \cite{Engler2018}. 
Yet, although these methods allow to avoid explicit charge calculations, they may end up with atomic charges suffering from the same drawbacks as those of the database on which they have been fitted.

To correct for the too approximate nature of the ESP represented by atomic partial charges only \cite{chipot1993} (among other drawbacks), higher order moments of the charge distribution have been introduced within localization schemes, and consequently in force fields used for MD simulations. At their early times, multipole moments of the electronic density were introduced as a generalization of Mulliken population analysis, to analyse and interpret molecular wave-functions with a larger number of local descriptors \cite{Stone1981,Stone1985,Vigne-Maeder1988}. Since the advent of modern polarizable force fields, such as AMOEBA \cite{Ren2002,Ren2003,Ren2011,Wu2012}, these local descriptors have also been used successfully to improve the description of biomolecular and organic molecules, and of their mutual interactions, in particular thanks to an improved treatment of the water solvent. 
Local multipole moments -- which are used \textit{e.g.} up to quadrupole moments in AMOEBA force field -- allow to capture the local anisotropy of the charge density (contrary to atomic charges alone) and are thus a key ingredient of polarizable force fields. 

In a similar manner as for partial charges, local multipole moments can be derived either \textit{indirectly} (by fitting the multipoles-derived ESP to the quantum ESP) or \textit{directly} (\textit{e.g.} using the density matrix corresponding to a certain basis expansion, or the values of the density on a grid) from the result of a quantum calculation.
To the former rationale belongs the fitting method implemented \textit{e.g.} in the Mulfit code \cite{Ferenczy1997,Winn1997}, which allows to derive atomic charges \cite{Winn1997} or multipoles \cite{Ferenczy1997} (up to user-specified orders, for every atom) by fitting on the ESP generated by higher order local multipoles -- typically obtained previously from distributed multipole analysis (DMA) \cite{Stone1981,Stone1985}. 
To the latter family of methods belong Stone's DMA \cite{Stone1981,Stone1985}, performing a redistribution of the density matrix coefficients and elementary local multipole moments to final expansion sites (\textit{e.g.} all the atoms), and real-space partitioning schemes in the continuity of Bader concepts of atoms in molecules \cite{Bader1972}. The family of Hirshfeld-like partitionning schemes \cite{Hirshfeld1977,maslen1985atomic,ayers2000atoms,parr2005atom}, namely its iterative version Hirshfeld-I \cite{bultinck2007uniqueness,bultinck2007critical}, the iterative stockholder approach (ISA) \cite{lillestolen2008redefining,Lillestolen2009} and its variants GISA \cite{verstraelen2012conformational}, MB-ISA \cite{verstraelen2016minimal}, and more recent basis-space implementation (BS-ISA) \cite{Misquitta2014}, belong to this family of real-space oriented methods.  
This family of methods yields multipole moments which have been claimed to be rather transferable \cite{heidar2017information}.
Note that in principle, real-space partitioning schemes end up with local multipole moments located at atomic sites only (to enable force computations in force field applications), although this is not a requirement. The DMA method can also naturally allocate contributions to atoms and to other non-atomic sites, such as bond centers.

In this article, we review and clarify several existing schemes to partition a molecular density into several (\textit{e.g.} atomic) contributions or to derive directly distributed multipole moments. In Section~\ref{sec:AIM_methods_based_objective_functionals}, we first focus on the real-space oriented methods of the Hirshfeld / Iterative Stockholder Atom (ISA) family. We provide a unified mathematical framework to describe the existing ISA methods, through the formulation (\ref{eq:opt_AIM}). 
This formulation is based on information theory, a link that was first introduced by Nalewajski and Parr (see~\cite{nalewajski2000information}) and then subsequently used in the works of Ayers, Bultinck, Heidar-Zadeh and co-workers (see \cite{bultinck2007uniqueness,bultinck2007critical,heidar2017information} and references therein), but is new to our knowledge. 
It is also worthwhile to stress the mathematical properties established in~\cite{ayers2000atoms}.
Driven by formulation (\ref{eq:opt_AIM}), we propose a new AIM scheme, named L-ISA (linear approximation of ISA), and presented in Section~\ref{sec:L-ISA}. We also formulate the Minimal-Basis ISA (MB-ISA) method as a minimization of the Kullback-Leibler entropy between atomic-\textit{shell} densities and the corresponding pro-atomic-\textit{shell} densities, under equality constraints involving zero and first order moments of the atomic densities (see section \ref{sec:MB_ISA_iterations}). 
We then focus in Section \ref{sec:maths} on the mathematical analysis of the ISA and L-ISA methods. We first prove the existence and uniqueness of the solutions to the ISA and L-ISA optimization problems (Theorem~\ref{thm:WPcontPb}). While uniqueness easily follows from the strict convexity properties of the Kullback-Leibler entropy, establishing the existence of a minimizer is more difficult due to possible loss of compactness.
We then prove that the ISA and L-ISA algorithms can be interpreted as alternating minimization methods, that the associated entropy is a Lyapunov function of the algorithm, and that the $L^2$ norm between two successive iterates converges to $0$. We also prove that, in the case of the L-ISA method, the L-ISA algorithm converges toward the unique minimizer of the L-ISA optimization problem. All these results are collected in Theorem~\ref{th:convergence}. We finally prove specific mathematical properties of ISA decomposition for diatomic systems in Propositions~\ref{prop:spheric} and~\ref{prop:continuity}. 
In Section \ref{sec:DMA}, we review a popular basis-space oriented method, the Distributed Multipole Analysis (DMA), whose mathematical formulation is also clarified. 
Finally, we present in Section \ref{sec:numerical_results} some numerical results (with a focus on diatomic systems) of the different ISA schemes, namely GISA, L-ISA (section \ref{sec:L-ISA}), MB-ISA and (historical) ISA (section \ref{sec:ISA_iterations}), which have been implemented numerically. The DMA method, which has also been implemented in a more modern and modular form, enabling for more redistribution strategies to be explored (section \ref{sec:DMA}), is also compared to the ISA method in terms of convergence of local multipole moments with increasing basis set size, for some test systems.

\section{AIM methods based on objective functionals}
\label{sec:AIM_methods_based_objective_functionals}

We denote by
$$
X:=\left\{ f \in L^1(\R^3) \cap L^\infty(\R^3) \; \middle| \; \lim_{|\br| \to \infty} f(\br) = 0, \; \int_{\R^3} |\br| |f(\br)| \, d\br < \infty \right\}
$$
the Banach space of bounded, integrable, real-valued functions on the physical space $\R^3$ with finite first moments, and vanishing at infinity.  In the sequel, we consider a given nonnegative density $\rho \in X$ such that
$$
N:= \int_{\R^3} \rho(\br) \, d\br > 0,
$$
and a collection $\bR=(\bR_a)_{1 \le a \le M} \in (\R^3)^M$ of $M$ points in $\R^3$. Ground and excited state electronic densities of molecules and clusters are known to be continuous, positive, functions on $\R^3$ decaying exponentially fast at infinity (see e.g.~\cite[Theorem 1.3]{Fournais_et_al_2002}), and are therefore elements of $X$. 

\subsection{Set of admissible AIM decompositions}

The goal is to decompose $\rho$ as a sum of translated nonnegative densities $\rho_a^{\rm opt}\in X$, i.e.
\begin{equation}\label{eq:AIM}
\rho(\br) = \sum_{a=1}^M \rho_a^{\rm opt}(\br-\bR_a),
\end{equation}
each $\rho_a^{\rm opt}$ being localized around the origin and optimized in some sense. In quantum chemistry, the $\bR_a$'s are most often the positions of the $M$ nuclei of a molecular system containing $N \in \N^*$ electrons, and $\rho$ an approximation of its ground-state (or $k$-th excited-state) electronic density obtained by a given electronic structure calculation method in a given basis set. We will then denote by $z_a \in \N^*$ the charge of nucleus $a$ for $1\leq a \leq M$. However, in some applications, it can be useful to choose expansion centers away from the nuclei, for instance on a chemical bond, or at the center of mass of a functional group. We will therefore call the points $(\bR_a)_{1\leq a \leq M}$ the expansion centers (rather than the atomic positions); for convenience, we will however use the standard terminology {\it atoms-in-molecules} (AIM) to refer to the family of functions $(\rho_a^{\rm opt})_{1\leq a \leq M}$.

For later purposes, we introduce the atomic charge linear map $\bcN: X^M \to \R^M$ defined by
\begin{equation}\label{eq:atomic_charge_map}
\forall \bm\rho:=(\rho_a)_{1 \le a \le M} \in X^M, \quad [\bcN(\bm\rho)]_a = \int_{\R^3} \rho_a(\br) \, d\br, \; \forall 1\leq a\leq M.
\end{equation}

\begin{remark} We use the decomposition \eqref{eq:AIM} instead of the (equivalent) usual decomposition $\rho(\br) = \displaystyle \sum_{a=1}^M \rho_a^{\rm opt}(\br)$ because the former is more convenient for the analysis of the dependence of the AIM decomposition on the atomic positions. For instance, it is expected that the $\rho_a^{\rm opt}$ (centered at the origin) in \eqref{eq:AIM} converge to the ground (or excited) state density of the isolated atom or ion in the dissociation limit.
\end{remark}

\medskip

The $\rho_a^{\rm opt}$'s can be used as such, or transformed into a collection of descriptors, typically their charges, and their first (dipolar) and second-order moments:
$$
q_a := \int_{\R^3} \rho_a^{\rm opt}(\br) \, d\br, \quad \bp_a := \int_{\R^3} \br \rho_a^{\rm opt}(\br) \, d\br, \quad \bQ_a :=\int_{\R^3} \br \otimes \br \, \rho_a^{\rm opt}(\br) \, d\br.
$$

\medskip
We denote by $X_+:= \{f \in X \; | \; f \ge 0 \; \mbox{a.e.} \}$ the convex cone of bounded integrable nonnegative densities vanishing at infinity and introduce
\begin{equation}\label{eq:KrhoR}
\boxed{\bcK_{\rho,\bR}:=\left\{ \bm \rho=(\rho_a)_{1 \le a \le M} \in X_+^M \; \middle| \; \sum_{a=1}^M \rho_a(\,\cdot\,-\bR_a) = \rho \right\}}, 
\end{equation}
the set of admissible AIM decompositions of the density $\rho$ with respect to the expansion centers~$\bR=(\bR_a)_{1 \le a \le M} \in (\R^3)^M$.

The set $\bcK_{\rho,\bR}$ is non-empty and convex. It has also some nice topological properties which will allow us to prove the existence of an optimal decomposition for the usual AIM decomposition methods (see the mathematical results in Section~\ref{sec:maths} and their proofs in Section~\ref{sec:proofs}).

\subsection{Main ingredients of an AIM decomposition method}

Let us denote by $X^r \subset X$ (respectively $X^r_+ \subset X_+$) the subset of functions of $X$ (respectively $X_+$) that are radially symmetric. 

An objective-functional-based AIM decomposition method consists in seeking an optimal decomposition $\bm\rho^{\rm opt} :=(\rho_a^{\rm opt})_{1 \le a \le M} \in \bcK_{\rho,\bR}$, for a specific criterion. It is characterized by two main ingredients: 
\begin{itemize}
\item [(i)] a collection of sets of (radially symmetric) proatom densities $(\cK^0_z)_{z\in \mathbb{N}^*}$, so that for each $z\in \mathbb{N}^*$, $\cK^0_z \subset X^r_+$ contains reasonable approximations of the ground state density of an isolated atom with nuclear charge $z$. The set of proatom densities is then used to build a set of admissible proatoms-in-molecule (or promolecule) densities
    \begin{equation}\label{eq:def_K0}
    \boxed{\bcK^0:= \cK^0_{z_1} \times \cdots \times \cK^0_{z_M} = \left\{ \bm\rho^0:=(\rho_a^0)_{1 \le a \le M}, \; \rho_a^0 \in \cK_{z_a}^0 \right\} \subset (X^r_+)^M}
    \end{equation}
\item[(ii)] an objective functional $\mathcal E: X_+^M \times (X_+^r)^M \to \mathbb{R} \cup \{+\infty\}$, usually in the form of a relative entropy. 
\end{itemize}

\medskip

The set $\cK^0_z$ may, or not, depend explicitly on $z$. It may have the structure of a vector space but does not need to in general.  For all $1\leq a \leq M$, it is expected that, $\rho_a^{\rm opt}$ should be close in some sense to one of the elements of $\cK_{z_a}^0$, but let us emphasize that, in general, $\rho_a^{\rm opt}$ will not be a radially symmetric function, hence will not belong to $\cK^0_{z_a}$. 

\medskip

The quantity $\mathcal E(\bm\rho, \bm\rho^0)$ measures the discrepancy between a trial AIM decomposition $\bm\rho:=(\rho_a)_{1 \le a \le M}\in X_+^M$ and a trial promolecule density $\bm\rho^0:=(\rho_a^0)_{1 \le a \le M}\in \bcK^0$. Most objective-functional-based AIM decomposition methods (including ISA, Hirschfeld, Hirschfeld-I, L-ISA, NL-ISA) consist in finding $\left( \bm\rho^{\rm opt}, \bm\rho^{0, \rm opt}\right) \in \bcD_{\rho,\bR}$ solution to the minimization problem
\begin{equation}\label{eq:general}
\boxed{\left( \bm\rho^{\rm opt}, \bm\rho^{0, \rm opt}\right) \in \mathop{\rm argmin}_{\left( \bm\rho, \bm\rho^{0}\right) \in\bcD_{\rho,\bR} } \mathcal E(\bm\rho, \bm\rho^0)}
\end{equation}
where $\bcD_{\rho,\bR}$ is a well-chosen subset of $\bcK_{\rho,\bR} \times \bcK^0$.

\medskip

In Sections~\ref{sec:proatom} and~\ref{sec:objective}, we detail various possible choices of sets of proatom densities $\cK^0_{z}$ and objective functionals which are encountered in the literature. The aim of Section~\ref{sec:KL} is to present in more details the methods based on the Kullback-Leibler divergence, including Hirschfeld, Hirschfeld-I, ISA, L-ISA and NL-ISA. The GISA and MB-ISA methods cannot be expressed under the form (\ref{eq:general}); their mathematical structures are presented in Sections~\ref{sec:GISA} and~\ref{sec:MB-ISA} respectively.

\subsection{Proatom densities}\label{sec:proatom}
As anticipated above, we now discuss several choices for the proatom densities $\cK^0_z$.
\\

\noindent\boxed{1} 
In the original \emph{Hirschfeld method}~\cite{Hirshfeld1977}, the set $\cK^0_{z}$  is a singleton and only contains the ground state density of the neutral atom
$$
\cK^0_{z,{\rm Hirschfeld}} = \left\{ \rho_{z,z}^0 \right\},
$$
where $\rho_{z,n}^0$ denotes (an approximation of) the radially symmetric ground state density of the atomic system consisting of a single nucleus of charge $z$ and $n$ electrons\footnote{If the ground state is degenerate and the radial symmetry broken, $\rho_{z,n}^{0}$ is chosen equal to the radially symmetric mixed-state ground-state density obtained by averaging the pure-state ground-state densities (with respect to the Haar measure of the rotation group SO(3)).}, being understood that 
$\rho_{z,n}^0=\rho_{z,N_z^{\rm max}}^0$ for $n$ greater than $N_z^{\rm max}$, the maximum number of electrons that a nucleus of charge $z$ can bind. However, choosing for $\rho_a^0$ electronic densities of {\em neutral} atoms does not seem appropriate for strongly polarized molecules where significant charge transfers occur. This and other drawbacks are detailed in~\cite{bultinck2007critical}, such as the fact that the Hirshfeld method is limited to neutral molecules, and that Hirshfeld atomic charges are on average too small to accurately describe molecular polarization. A desirable feature of an AIM method is its ability to detect charge transfer in an automatic way, without requiring {\it a priori} knowledge from the user on the chemical system under consideration. \\

\noindent\boxed{2} 
In the so-called \emph{Hirschfeld-I method} \cite{bultinck2007critical} (where \emph{I} stands for iterative), this is achieved to some point by the following choice of proatom densities which contains not only the neutral ground-state atomic density, but also the ground-state densities of the ionized forms of the atom, as well as those of the mixed states corresponding to fractional (in the sense of non-integer) numbers of electrons:
$$
\cK^0_{z,{\rm Hirschfeld-I}} = \left\{ \rho_{z,n}^0, \, n \in \R_+ \right\}
$$
with
$$
\forall n \in \R_+, \quad \rho_{z,n}^0(\br) :=  \left( \lceil n \rceil - n\right) \rho^0_{z,\lfloor n \rfloor}(\br) + \left( n - \lfloor n \rfloor\right) \rho_{z,\lceil n \rceil}^0(\br) ,
$$
where for $x \in \R$, $\lfloor x \rfloor$ and $\lceil x \rceil$ respectively denote the largest integer smaller than $x$, and the smallest integer larger than $x$. \\

\noindent\boxed{3} In the \emph{iterative stockholder approach (ISA)}~\cite{lillestolen2008redefining,Lillestolen2009}, this set is independent of the chemical element and is chosen as the whole convex cone of bounded integrable nonnegative radially symmetric functions vanishing at infinity:
$$
 \cK^0_{\rm ISA} = X_+^r.
$$\\

\noindent\boxed{4} In the \emph{finite-dimensional linear approximations of ISA (L-ISA)}, the set $\cK^0_{z}$ is chosen to be a non-empty closed convex subset of $X_+^r$. More precisely, 
\begin{equation}\label{eq:L-ISA}
\cK^0_{z,\rm L-ISA} = \left\{ \rho^0(\br) =\sum_{k=1}^{m_{z}} c_{k} \, g_{z,k}(\br), \; c_{k} \in \R_+ \right\}, 
\end{equation}
where the $g_{z,k}$'s, $1 \le k \le m_{z}$ are $z$-dependent given linearly-independent, positive, $L^1$-normalized functions of $\cK^0_{\rm ISA} = X_+^r$.\\

\noindent\boxed{5} In the \emph{finite-dimensional nonlinear approximations of ISA (NL-ISA)},
\begin{equation}\label{eq:NL-ISA}
\cK^0_{z}=\cK^0_{z,\rm NL-ISA} = \left\{ \rho^0(\br) =\sum_{k=1}^{m_{z}} c_{k} \, g_{z,k,\alpha_{k}}(\br), \; c_{k} \in \R_+, \;  \alpha_{k} >0 \right\}, 
\end{equation}
where the $g_{z,k,\alpha_{k}}$'s are $L^1$-normalized positive functions of $\cK^0_{\rm ISA} = X_+^r$ depending on a parameter $\alpha_{k}$ in a non-affine manner and which needs to be optimized. For the sake of simplicity, here, we assume that the parameters $\alpha_{k}$ are positive real numbers, but $\alpha_{k}$ could be a vector subjected to equality and/or inequality constraints.\\

\noindent\boxed{6} In \emph{GISA} \cite{verstraelen2012conformational}, the set $\cK^0_z$ is defined as the finite-dimensional convex cone
\begin{equation}\label{eq:GISA_set}
\cK^0_{z,{\rm GISA}} = \left\{ \rho_z^0(\br) = \sum_{k=1}^{m_z} c_{k} \, \zeta_{\alpha_{z,k}}(\br), \; c_{k} \in \R_+ \right\} \quad \mbox{where} \quad \zeta_{\alpha}(\br) = \left( \frac{\alpha}{\pi} \right)^{3/2} e^{-\alpha|\br|^2},
\end{equation}
 and where $m_z \in \N^*$ and $\alpha_{z,k} > 0$ are fixed empirical parameters.

\subsection{Objective functionals}\label{sec:objective}
 As mentioned above, the aim of the quantity $\mathcal E(\bm\rho, \bm\rho^0)$ is to measure the discrepancy (in some sense) between a trial AIM decomposition $\bm\rho:=(\rho_a)_{1 \le a \le M}\in X_+^M$ and a trial promolecule density $\bm\rho^0:=(\rho_a^0)_{1 \le a \le M}\in \bcK^0$.

Most of the popular information-theory-based AIM methods (Hirschfeld, Hirschfeld-I, ISA, L-ISA, NL-ISA -- see below for further details) make use of the relative entropy 
\begin{equation}\label{eq:EKL}
\mathcal E(\bm\rho, \bm\rho^0) := S(\bm\rho|\bm\rho^0),
\end{equation}
where
\begin{equation}\label{eq:def_S}
S(\bm\rho|\bm\rho^0):= \sum_{a=1}^M \int_{\R^3} \rho_a(\br) \log \left( \frac{\rho_a(\br)}{\rho_a^0(\br)} \right) \, d\br =  \sum_{a=1}^M s_{KL}(\rho_a|\rho_a^0),
\end{equation}
constructed from the Kullback-Leibler divergence commonly used in information theory:
\begin{equation}\label{eq:KL}
\forall f,g \in X_+, \quad s_{\rm KL}(f|g):=\int_{\R^3} f(\br) \log \left( \frac{f(\br)}{g(\br)} \right) \, d\br.
\end{equation}
In order to make the above definition for $L^1$ functions consistent with the general definition of the Kullback-Leibler divergence for bounded positive measures, the following conventions must be used:
\begin{equation}\label{eq:convention_entropy}
0 \log \left(\frac{0}{0}\right)=0 \quad \mbox{and for all } p > 0, \quad 0 \log \left(\frac{0}{p}\right)=0, \quad p \log \left(\frac{p}{0}\right)=+\infty.
\end{equation}
Note however that alternative methods have been proposed \cite{heidar2014deriving}, where the objective functional $\mathcal E(\bm\rho, \bm\rho^0)$ is chosen as 
$$
\mathcal E(\bm\rho, \bm\rho^0) = \sum_{a=1}^M s(\rho_a|\rho_a^0),
$$
where $s(\cdot|\cdot)$ is either the Hellinger distance 
\begin{equation}\label{eq:Hel}
s_{\rm Hel}(f|g):=\int_{\R^3} \left( \sqrt{f(\br)}- \sqrt{g(\br)}\right)^2 \, d\br = \int_{\R^3} f(\br)  \left( \sqrt{\frac{g(\br)}{f(\br)}}-1 \right)^2,
\end{equation}
or a more general divergence of the form
\begin{equation}\label{eq:relat_ent}
s_{\phi}(f|g):= \int_{\R^3} f(\br)  \phi\left(\frac{g(\br)}{f(\br)}\right),
\end{equation}
where $\phi:\R_+ \to \R \cap \{+\infty\}$ is a strictly convex function, smooth on $\R_+^*$ and such that $\phi(1)=0$.

\medskip

The GISA method \cite{verstraelen2012conformational} makes use of both the relative entropy $S(\bm\rho|\bm\rho^0)$ and the $L^2$-distance
$$
d_{L^2}(\bm\rho,\bm\rho^0):= \left( \sum_{a=1}^M \| \rho_a-\rho_a^0\|_{L^2}^2\right)^{1/2},
$$
in a more complex fashion as will be summarized below in Section~\ref{sec:GISA}.

Lastly, the Minimal Basis Iterative Stockholder algorithm (MB-ISA) \cite{verstraelen2016minimal} is in fact not based on a direct atomic decomposition on $\rho$. It rather provides an {\em atomic-shell} decomposition of $\rho$ on {\em proatomic-shell} densities, from which AIM densities can be derived. As will be seen in Section~\ref{sec:MB-ISA}, it nevertheless nicely fits in the unified mathematical framework described in this article.

\subsection{Kullback-Leibler entropy methods}\label{sec:KL}

In this section, we more specifically focus on the AIM decomposition methods the objective functional of which is based on the Kullback-Leibler divergence  $S(\cdot|\cdot)$. Let us first recall the well-known formula~\eqref{eq:G_function} below, a rigorous proof of which is provided in Section~\ref{sec:proofs}.

\begin{lemma}\label{lem:Step1} Let $\bm \rho^0 := (\rho^0_a)_{1\leq a\leq M}\in (X_+^r)^M$ and $\rho \in X_+$ be such that
\begin{equation}\label{eq:hyp_G0}
\quad s_{\rm KL}(\rho|\rho^0) < +\infty \quad \mbox{where} \quad \rho^0(\br) := \sum_{a=1}^M \rho^0_a(\br-\bR_a).
\end{equation}
Then, the constrained minimization problem 
\[
\inf_{\bm \rho\in \bcK_{\rho, \bR}} S( \bm \rho| \bm \rho^0)
\]
admits an unique minimizer 
$\bG_{\rho,\bR}({\bm\rho}^0) := ([\bG_{\rho,\bR}({\bm\rho}^0)]_a)_{1\leq a \leq M}$, given by
\begin{equation} \label{eq:G_function}
 \forall 1\leq a \leq M, \quad    [\bG_{\rho,\bR}({\bm\rho}^0)]_a(\bm r) = \left| \begin{array}{ll} \dps
 \frac{\rho_a^0(\bm r)}{\rho^0(\bm r + \bR_a)}\rho(\bm r + \bR_a) & \quad \mbox{if } \rho_a^0(\bm r) > 0, \\
 0 & \quad \mbox{if } \rho_a^0(\bm r) =0, \end{array} \right.
\end{equation}
and it holds that
\begin{equation}\label{eq:min_on_KrhoR}
    \min_{\bm \rho\in \bcK_{\rho, \bR}} S( \bm \rho| \bm \rho^0) 
    = s_{\rm KL}(\rho|\rho^0).
\end{equation}

\end{lemma}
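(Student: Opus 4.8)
## Proof plan for Lemma~\ref{lem:Step1}

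\textbf{Setup and reduction to a pointwise problem.}
The plan is to exploit the fact that the functional $S(\bm\rho|\bm\rho^0) = \sum_{a=1}^M s_{\rm KL}(\rho_a|\rho_a^0)$ decomposes as an integral over $\R^3$ of a function depending only on the values $(\rho_a(\br))_{1\le a\le M}$ at each point, after a common translation. More precisely, using the change of variables $\br \mapsto \br + \bR_a$ in each term and writing $\widetilde\rho_a(\br) := \rho_a(\br)$, $\widetilde\rho_a^0(\br):=\rho_a^0(\br)$ evaluated appropriately, the constraint $\sum_a \rho_a(\br - \bR_a) = \rho(\br)$ becomes, for a.e.\ fixed $\br$, a constraint on the nonnegative numbers $t_a := \rho_a(\br - \bR_a)$, namely $\sum_a t_a = \rho(\br)$. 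One checks that, with $p_a := \rho_a^0(\br - \bR_a) \ge 0$ and $P := \rho^0(\br) = \sum_a p_a$, the integrand to be minimized pointwise is $\sum_{a} t_a \log(t_a / p_a)$ over the simplex $\{t_a \ge 0,\ \sum_a t_a = \rho(\br)\}$, with the conventions~\eqref{eq:convention_entropy}. The key elementary fact (a finite-dimensional Gibbs / log-sum inequality) is that this pointwise minimum equals $\rho(\br)\log(\rho(\br)/P)$ and is attained uniquely at $t_a = \rho(\br)\, p_a / P$ whenever $P>0$ (and requires $p_a=0 \Rightarrow t_a=0$, which is forced since otherwise the integrand is $+\infty$). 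This is exactly formula~\eqref{eq:G_function} after translating back.

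\textbf{Verification that the candidate is admissible and achieves the value.}
First I would check that $\bG := \bG_{\rho,\bR}(\bm\rho^0)$ defined by~\eqref{eq:G_function} indeed belongs to $\bcK_{\rho,\bR}$: each component is nonnegative; measurability is clear; the constraint $\sum_a [\bG]_a(\br - \bR_a) = \rho(\br)$ holds a.e.\ because on the set where $\rho^0(\br)>0$ we have $\sum_a \rho_a^0(\br-\bR_a)/\rho^0(\br)\cdot\rho(\br) = \rho(\br)$, while on the (null-for-$\rho$, by hypothesis $s_{\rm KL}(\rho|\rho^0)<+\infty$) set where $\rho^0(\br)=0$ one must have $\rho(\br)=0$ as well, so both sides vanish. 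Membership of each $[\bG]_a$ in $X$ (boundedness, integrability, first moment, decay at infinity) follows from $0 \le [\bG]_a(\br-\bR_a) \le \rho(\br)$ pointwise and $\rho \in X$. Then $\sum_a s_{\rm KL}([\bG]_a | \rho_a^0) = s_{\rm KL}(\rho|\rho^0)$ by a direct computation using $[\bG]_a(\br)/\rho_a^0(\br) = \rho(\br+\bR_a)/\rho^0(\br+\bR_a)$ on $\{\rho_a^0>0\}$, which gives~\eqref{eq:min_on_KrhoR} once the lower bound below is established.

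\textbf{The lower bound and uniqueness.}
For any $\bm\rho \in \bcK_{\rho,\bR}$, I would apply the pointwise Gibbs inequality above at a.e.\ $\br$ (noting that if $S(\bm\rho|\bm\rho^0)<+\infty$ then for a.e.\ $\br$ one has $\rho_a^0(\br-\bR_a)=0 \Rightarrow \rho_a(\br-\bR_a)=0$, so the hypotheses of the pointwise lemma hold; if $S(\bm\rho|\bm\rho^0)=+\infty$ there is nothing to prove) and integrate over $\R^3$ to get $S(\bm\rho|\bm\rho^0) \ge \int_{\R^3} \rho(\br)\log(\rho(\br)/\rho^0(\br))\,d\br = s_{\rm KL}(\rho|\rho^0)$. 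Combined with the previous paragraph this proves~\eqref{eq:min_on_KrhoR} and that $\bG$ is a minimizer. Uniqueness follows from the strict convexity of $t \mapsto t\log t$: the pointwise minimizer of $\sum_a t_a\log(t_a/p_a)$ on the simplex is unique whenever $P>0$, so any minimizer must coincide a.e.\ with $\bG$ on $\{\rho^0(\cdot)>0\}$; on $\{\rho^0(\cdot)=0\}$ all components vanish anyway. The only points needing care are the convention bookkeeping at zeros of $\rho^0_a$ and the measurability/integrability of the pointwise minimum, which are routine given $\rho \in X$; the genuinely substantive ingredient is the elementary but crucial finite-dimensional Gibbs inequality with equality case, and I expect the write-up's main effort to go into stating and proving that cleanly with the conventions~\eqref{eq:convention_entropy}.
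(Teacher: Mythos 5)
Your proposal is correct and follows essentially the same route as the paper: exhibit the explicit candidate \eqref{eq:G_function}, verify it lies in $\bcK_{\rho,\bR}$ with $S(\bG_{\rho,\bR}(\bm\rho^0)|\bm\rho^0)=s_{\rm KL}(\rho|\rho^0)$, obtain the lower bound from a Gibbs-type inequality, and get uniqueness from strict convexity. The only (cosmetic) difference is that you disintegrate the inequality into a pointwise log-sum inequality on the simplex and integrate, whereas the paper works globally via the identity $S(\bm\rho|\bm\rho^0)-S(\bm\rho^*|\bm\rho^0)=S(\bm\rho|\bm\rho^*)\ge 0$, justified with the positive/negative-part decomposition of Remark~\ref{rem:integration_entropy} — the same convexity fact in integrated form.
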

Observe that if $\rho_a^0(\bm r) > 0$, then  $\rho^0(\bm r + \bR_a) \ge\rho_a^0(\bm r)   >0$, while if $\rho_a^0(\bm r) =0$, then we can have either $\rho^0(\bm r + \bR_a) > \rho_a^0(\bm r)$, in which case $\frac{\rho_a^0(\bm r)}{\rho^0(\bm r + \bR_a)}\rho(\bm r + \bR_a)$ is well-defined and equal to $0$, or $\rho^0(\bm r + \bR_a) = \rho_a^0(\bm r)=0$, in which case $\frac{\rho_a^0(\bm r)}{\rho^0(\bm r + \bR_a)}\rho(\bm r + \bR_a)$ is not well-defined.

\begin{remark} \label{rem:product_space}
In the case when $\mathcal E( \bm \rho, \bm \rho^0)=S( \bm \rho| \bm \rho^0)$ and $\bcD_{\rho,\bR}=\bcK_{\rho,\bR} \times \bcK^0$, we infer from Lemma~\ref{lem:Step1} that the minimization problem in \eqref{eq:general} is equivalent to
\begin{equation}\label{eq:caseH}
\inf_{\bm \rho^0 := (\rho^0_a)_{1\leq a\leq M} \in \bcK^0} s_{\rm KL}\left( \, \rho \;  \bigg| \sum_{a=1}^M \rho_a^0(\br - \bR_a) \right).
\end{equation}
Although this problem seems natural, it is not the one solved in most AIM methods, probably because it is computationally challenging in general. Only the basic Hirschfeld method fits into this framework, as will be seen in the next section, but in this case, \eqref{eq:caseH} is trivial since $\bcK^0_{\rm Hirschfeld} =\{{\bm\rho}^0_{\rm Hirschfeld}\}$ contains only one element.
\end{remark}

\subsubsection{The Hirschfeld method}\label{sec:Hirschfeld}

The Hirshfeld method consists in solving 
\begin{equation} \label{eq:opt_AIM0}
\inf_{\bm\rho \in \bcK_{\rho,\bR}} S(\bm\rho | {\bm\rho}^0_{\rm Hirschfeld}),
\end{equation}
where ${\bm\rho}^0_{\rm Hirschfeld}=(\rho_{z_a,z_a}^0)_{1 \le a \le M}$ is given. As already mentioned in Remark~\ref{rem:product_space}, this problem fits into the general framework (\ref{eq:general}) using the objective functional $\mathcal E$ defined in (\ref{eq:EKL}) and $\bcD_{\rho, \bR} = \bcK_{\rho, \bR} \times \bcK^0_{\rm Hirschfeld}$.

\medskip

The atomic densities $\rho^0_{z_a,z_a}$ are positive everywhere and decay as exponentials at infinity; in addition, exact molecular densities and their usual approximations decay (at least) exponentially fast at infinity. The assumptions in Lemma~\ref{lem:Step1} are thus satisfied, which implies that the basic Hirschfeld problem \eqref{eq:opt_AIM0} is well-posed and has an explicit solution:
\begin{equation}\label{eq:sol_Hirschfeld}
\rho_a^{\rm opt,Hirschfeld}(\br)
=[\bG_{\rho,\bR}({\bm\rho}^0_{\rm Hirschfeld})]_a(\br)= \frac{\rho^0_{z_a,z_a}(\br)}{\displaystyle \sum_{b=1}^N \rho_{z_b,z_b}^0(\br-\bR_b+\bR_a)} \rho(\bR_a+\br).
\end{equation}

\subsubsection{AIM iterative methods based on the Kullback-Leibler divergence}
\label{subsec:AIM_methods_based_KL_divergence}

The ISA, Hirschfeld-I, L-ISA and NL-ISA methods can be formulated using the following unified formalism: the optimal AIM decomposition is obtained by solving 
\begin{equation} \label{eq:opt_AIM}
\boxed{\inf_{(\bm\rho,\bm\rho^0) \in \bcC_{\rho,\bR}} S(\bm\rho | {\bm\rho}^0)}
\end{equation}
where
$$
\boxed{\bcC_{\rho,\bR}= \big\{  (\bm\rho,\bm\rho^0) \in \bcK_{\rho,\bR} \times  \bcK^0 \; | \; \bcN(\bm\rho)=\bcN(\bm\rho^0) \big\}}
$$
with $\bcN$ being the atomic charge map defined in \eqref{eq:atomic_charge_map}.
Note that the variation that determines the different methods (ISA, Hirschfeld-I, L-ISA, NL-ISA) lies in the definition of $\bcK^0$ defined from the $\cK^0_z$'s given in Section~\ref{sec:proatom}, i.e. $\cK^0_z=\cK^0_{z,{\rm Hirschfeld-I}}$, $\cK^0_z=\cK^0_{\rm ISA}$, $\cK^0_z=\cK^0_{z,\rm L-ISA}$ or $\cK^0_z=\cK^0_{z, \rm NL-ISA}$.
These methods hence fit into the general framework (\ref{eq:general}) using the objective functional $\mathcal E$ defined in (\ref{eq:EKL}) and $\bcD_{\rho, \bR} = \bcC_{\rho,\bR}$ with corresponding $\bcK^0$. The solution to \eqref{eq:opt_AIM} is not explicit and must be computed numerically, by an iterative algorithm. 
The constraints $\bcN(\bm\rho)=\bcN(\bm\rho^0)$ request that for all $1\leq a\leq M$, $\rho_a$ and $\rho_a^0$ have the same charge, i.e. $\int_{\R^3}\rho_a(\br) \, d\br = \int_{\R^3}\rho_a^0(\br) \, d\br$, which is not the case for the optimal AIM obtained with the basic (non-iterative) Hirschfeld method. 

\medskip

\begin{remark} \label{rem:assumptions_ISA} For this approach to make sense, the least one can ask is that 
\begin{equation}\label{eq:cond_inf_finite}
\mbox{there exists $(\bm\rho,\bm\rho^0) \in \bcC_{\rho,\bR}$ such that $S(\bm\rho|\bm\rho^0) < +\infty$.}
\end{equation}
This condition guarantees that the infimum in \eqref{eq:opt_AIM} is finite. Note that it is not satisfied, even for physical ground-state electronic densities $\rho$, if e.g. the $\cK^0_z$'s only contain compactly supported functions. For \eqref{eq:cond_inf_finite} to be satisfied, the sets $\cK^0_z$ must contain functions which do not decay extremely fast at infinity. This is always the case for ISA for which the sets $\cK^0_{\rm ISA}=X_+^r$'  contains the function $e^{- |\br|}$, so that taking $\rho_a(\br) = M^{-1} \rho(\br+\bR_a)$ and $\rho^0_a(\br)=\frac{N}{8\pi M} e^{- |\br|}$, we have
$$
S(\bm\rho|\bm\rho^0) = \int_{\R^3} \rho \log\rho -  \left(\frac NM \log M - N \log \frac{N}{8\pi M}  \right) +  \frac 1M \sum_{a=1}^M \int_{\R^3} |\br| \rho(\br+\bR_a) \, d\br,
$$
which is a finite quantity for all $\rho \in X_+$. In the Hirschfeld-I, L-ISA, or NL-ISA settings, the sets $\cK^0_{z_a}$ must contain functions decaying asymptotically not faster than $e^{-\alpha |\br|}$ (for some $\alpha > 0$) for \eqref{eq:cond_inf_finite} to be satisfied for all $\rho \in X_+$. The requirements on the sets $\cK^0_{z_a}$ can be weakened by imposing more conditions on $\rho$. For instance, if we have $\int_{\R^3} |\br|^2 \rho(\br) \, d\br < \infty$, then condition \eqref{eq:cond_inf_finite} is satisfied as soon as the sets $\cK^0_{z_a}$ contain functions decaying asymptotically not faster than a Gaussian.
 \end{remark}

\medskip

Most of the iterative numerical schemes for solving \eqref{eq:opt_AIM} that have been proposed in the literature amounts to minimizing $S(\bm\rho|\bm\rho^0)$ alternatively with respect to each of the variables $\bm\rho^0$ and $\bm\rho$, under suitable constraints. They read as follows. 

\medskip

\noindent
\mybox{
{\bf Generic iterative AIM algorithm}: 
\begin{itemize}
\item {\bf Initialization}:  
Choose $\bm \rho^{0,(0)} \in \bm\cK^0$ such that $s_{\rm KL}(\rho|\rho^{0,(0)}) < +\infty$.
\item \bfseries Iteration $m\geq 0$: \normalfont 
\begin{description}[leftmargin=1cm,labelwidth=0.5cm,labelsep=0.5cm,itemindent=1cm]
\item[{\bf{Step 1}:}] 
set
\begin{equation}
    \label{eq:step1}
    \bm\rho^{(m)} = \bG_{\rho,\bR}(\bm\rho^{0,(m-1)})  = \argmin_{\bm\rho \in \bcK_{\rho,\bR}} S(\bm\rho|\bm\rho^{0,(m-1)}),
\end{equation}
\item[{\bf{Step 2}:}] 
find 
\begin{equation}
    \label{eq:step2} 
    \bm\rho^{0,(m)} \in \argmin_{\bm\rho^0 \in \bcK^0, \, \bcN(\bm\rho^0)=\bcN(\bm\rho^{(m)})} S(\bm\rho^{(m)}|\bm\rho^0).
\end{equation}
\end{description}
\end{itemize}
}
Recall that the solution to \eqref{eq:step1} in Step 1 is explicit, since the expression of the function $\bG_{\rho,\bR}(\bm\rho^0)$ is given by~\eqref{eq:G_function}. 

\medskip

Due to the particular nature of $S(\cdot|\cdot)$, the solution to Step~2 can then be obtained by solving $M$ independent and local problems of the form
\begin{equation}
    \label{eq:comp_G}
    \inf_{\rho^0_a \in \cK^0_{z_a}, \; \int_{\R^3} \rho^0_a=\int_{\R^3} \rho_a} s_{\rm KL}(\rho_a|\rho_a^0)
\end{equation}
for $\rho_a=\rho_a^{(m-1)}$.
\\

At the noticeable exception of the ISA method, solving the optimization problem in Step~2 for the various AIM methods presented above require evaluating the integrals 
$$
N_a^{(m)}:=[\bcN(\bm\rho^{(m)})]_a=\int_{\R^3} \rho_a^{(m)}(\br) \, d\br, 
$$
with $\bm\rho^{(m)}=\bG_{\rho,\bR}(\bm\rho^{0,(m-1)})$. Step 2 therefore requires an efficient numerical quadrature scheme to evaluate integrals of the form
\begin{equation} \label{eq:integral_atomic_charge}
Q_{a,\rho,\bR}(\bm\rho^0)
:=
[\bcN(\bG_{\rho,\bR}(\bm\rho^{0}))]_a
=
\int_{\R^3} \frac{\rho^0_a(\br)}{\sum_{b=1}^N \rho_b^0(\br-\bR_b+\bR_a)} \rho(\br+\bR_a) \, d\br,
\end{equation}
(with the convention that the integrand is equal to zero when the denominator vanishes),
for $\bm\rho^0=(\rho_a^0)_{1 \le a \le M}$ in the set $\bcK^0$ of promolecule densities. 
The elements of the sets $\bcK^0_{\rm GISA}$, $\bcK^0_{\rm Hirschfeld-I}$ or $\bcK^0_{\rm MB-ISA}$ of promolecule densities constructed from the GISA, Hirschfeld-I, and MB-ISA proatom densities respectively, are smooth, fast-decaying, radially symmetric functions. The integrals of the form~\eqref{eq:integral_atomic_charge} can therefore be efficiently computed using a one-dimensional quadrature scheme for the radial part and a Lebedev quadrature scheme for the angular part.

\medskip

We successively present in details in the following sections how problems of the form (\ref{eq:comp_G}) are solved for four different methods:
the Hirschfeld-I method, the general ISA method, the finite-dimensional linear approximations of the ISA (L-ISA) method,  the finite-dimensional nonlinear approximations of the ISA (NL-ISA) method.

\medskip

Note that the abstract versions L-ISA and NL-ISA haven not been reported in the literature, although they are similar, but not identical, to the GISA and MB-ISA methods respectively. For L-ISA however, we can establish rigorous results (see Section~\ref{sec:maths}) such as existence and uniqueness of the solution as well as global convergence of the AIM iteration scheme.

\subsubsection{Hirschfeld-I iterations}

For the Hirschfeld-I method \cite{bultinck2007critical}, solving \eqref{eq:comp_G} is trivial since the set $\{\rho^0_a \in \cK^0_{z_a,{\rm Hirschfeld-I}} \, | \,  \int_{\R^3} \rho^0_a=N_a\}$ contains a single element, namely $\rho^0_{z_a,N_a}$.
At the continuous level (i.e., without any discretization of the proatom densities), the Hirschfeld-I iterative procedure corresponding to the generic iterative AIM algorithm for 
$$
\bcK^0=\bcK^0_{\rm Hirschfeld-I}=\cK^0_{z_1,\rm Hirschfeld-I} \times \cdots \times \cK^0_{z_M,\rm Hirschfeld-I}
$$
can therefore be formulated more explicitly as follows:

\medskip

\noindent
\mybox{
\bfseries Hirschfeld-I algorithm: \normalfont

\begin{itemize}
\item {\bf Initialization:} 
choose an initial guess of charges $\bN^{(0)}=(N_a^{(0)})_{1 \le a \le M}\in (\R_+)^M$ so that $\displaystyle \sum_{a=1}^N N_a^{(0)}=N$.
\item \bfseries Iteration $m\geq 0$:
\normalfont 
\begin{description}[leftmargin=1cm,labelwidth=0.5cm,labelsep=0.5cm,itemindent=1cm]
\item 
set
\begin{align}
  &\rho_{a}^{(m)}(\br):=\frac{\rho^0_{z_a,N_a^{(m)}}(\br)}{\displaystyle \sum_{b=1}^M \rho^0_{z_b,N_b^{(m)}}(\br-\bR_b+\bR_a)} \rho(\bR_a+\br), \label{eq:HF1}
\end{align}
where
$$
\rho_{z_a,N_a^{(m)}}^0 = \left(\lceil N_a^{(m)} \rceil - N_a^{(m)}\right) \rho_{z_a,\lfloor N_a^{(m)} \rfloor}^0 + \left(N_a^{(m)}-\lfloor N_a^{(m)} \rfloor\right) \rho_{z_a,\lceil N_a^{(m)} \rceil}^0;
$$
\item 
compute  
\begin{equation}
    N_a^{(m+1)}:= \int_{\R^3}  \rho_{a}^{(m)}(\br) \, d\br.  \label{eq:comp_Na} 
\end{equation}
\end{description}
\end{itemize}
}
The atomic charges $N_a^{(m)}$ at iteration $m$ are fractional (i.e. non-integer) in general. The global convergence property of this iterative scheme was observed numerically in \cite{bultinck2007critical} on a benchmark of 168 molecules. A convergence proof was proposed in~\cite{bultinck2007uniqueness}.

\medskip

From a numerical point of view, the Hirschfeld-I method can be seen as a fixed-point procedure of the form $\bN^{(m+1)}=F_{\rm Hirschfeld-I}(\bN^{(m)})$, where $F_{\rm Hirschfeld-I}: \R_+^M \to \R_+^M$ is the function defined by \eqref{eq:HF1}-\eqref{eq:comp_Na}. Using $\bN=(N_a)_{1 \le a \le M} \in \R_+^M$ as main variables, it is not necessary to store in memory the functions $\rho_a^{(m)}$: they can be evaluated using \eqref{eq:HF1} from $\rho$, $\bN^{(m-1)}$, and the $\rho_{z,n}^{0}$'s. The implementation of the Hirschfeld-I method thus requires:
\begin{enumerate}
\item the pre-computation and storage of the radially symmetric functions $\rho_{z,n}^{0}$, for all chemical elements contained in the molecular system of interest, and integer values $n \le N_{z}^{\rm max}$. Note that these functions can be pre-computed once and for all with high accuracy and then be used for any molecular system;
\item repeated evaluations of integrals \eqref{eq:integral_atomic_charge}, where the $\rho_a^0$'s are of the form $\rho^0_{z_a,N_a}$. 
\end{enumerate}

\subsubsection{ISA iterations}
\label{sec:ISA_iterations}

The ISA method was originally introduced in~\cite{lillestolen2008redefining,Lillestolen2009} as the iterative scheme described below, which was then interpreted in \cite{heidar2017information} as a specific instance of the generic iterative AIM algorithm for 
$$
\bcK^0=\bcK^0_{\rm ISA}:=\left\{ \bm\rho^0=(\rho^0_a)_{1 \le a \le M}  \in (X_+^{r})^M \right\}.
$$ 
For $\cK^0_{z_a}=\cK^0_{\rm ISA} = X_+^r$ and Step 2, the solution to problem \eqref{eq:comp_G}, when it exists, has a particularly simple form.
Indeed, denoting by 
\begin{align}
\langle\rho_{a}\rangle_s(\br) :&= \fint_{\S^2} \rho_a(|\br|\bm\sigma) \, d\bm\sigma  \label{eq:rhoas} \\
&= \frac{1}{4\pi} \int_0^\pi \sin\theta \,  \int_0^{2\pi}  \, \rho_a(r \sin\theta\cos\phi \, \be_x + r \sin\theta\sin\phi \, \be_y + r \cos\theta \, \be_z) \, d\phi \,d\theta, \nonumber
\end{align}
with $\fint_{\S^2} := \frac{1}{4\pi} \int_{\S^2}$, the spherical average of $\rho_a$, we have the following result.

\begin{lemma}\label{lem:Step2_ISA} Let $\rho_a \in X_+$. Then, 
the constrained optimization problem
\begin{equation}\label{eq:pb_Lemma_1}
\mathop{\inf}_{\rho^0_a \in \cK^0_{\rm ISA}, \; \int_{\R^3} \rho^0_a = \int_{\R^3} \rho_a} s_{\rm KL}(\rho_a |\rho_a^0)
\end{equation}
is well-posed and its solution is $\langle \rho_a\rangle_s$.
\end{lemma}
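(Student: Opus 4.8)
The plan is to reduce the minimization to an elementary one-dimensional problem via a Pythagorean (chain-rule) identity for the Kullback--Leibler divergence: since $\rho_a^0$ is constrained to be radial, $\langle\rho_a\rangle_s$ plays the role of the ``conditional expectation'' of $\rho_a$ given its radial variable, so that the angular fluctuations of $\rho_a$ contribute to $s_{\rm KL}(\rho_a|\rho_a^0)$ an additive term that is independent of $\rho_a^0$, and minimizing over $\rho_a^0$ only affects the remaining purely radial part.

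First I would verify that $\langle\rho_a\rangle_s$, defined in~\eqref{eq:rhoas}, is an admissible competitor: it is radial and nonnegative by construction, bounded by $\|\rho_a\|_{L^\infty(\R^3)}$, and Tonelli's theorem gives both $\int_{\R^3}\langle\rho_a\rangle_s = \int_{\R^3}\rho_a$ and $\int_{\R^3}|\br|\,\langle\rho_a\rangle_s(\br)\,d\br = \int_{\R^3}|\br|\,\rho_a(\br)\,d\br < \infty$; since $\rho_a(\br)\to0$ as $|\br|\to\infty$, also $\langle\rho_a\rangle_s(r)\to0$ as $r\to\infty$, so $\langle\rho_a\rangle_s\in\cK^0_{\rm ISA}=X_+^r$ and it meets the mass constraint. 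Then, for an arbitrary admissible $\rho_a^0\in X_+^r$ (with $\int_{\R^3}\rho_a^0=\int_{\R^3}\rho_a$), I would pass to spherical coordinates $\br=r\bm\sigma$ and, for fixed $r$, split $\log\frac{\rho_a(r\bm\sigma)}{\rho_a^0(r)}=\log\frac{\rho_a(r\bm\sigma)}{\langle\rho_a\rangle_s(r)}+\log\frac{\langle\rho_a\rangle_s(r)}{\rho_a^0(r)}$; integrating the second summand over $\bm\sigma\in\S^2$ gives $4\pi\,\langle\rho_a\rangle_s(r)\log\frac{\langle\rho_a\rangle_s(r)}{\rho_a^0(r)}$ because $\int_{\S^2}\rho_a(r\bm\sigma)\,d\bm\sigma=4\pi\,\langle\rho_a\rangle_s(r)$. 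Multiplying by $r^2$ and integrating in $r$ yields
\begin{equation}
s_{\rm KL}(\rho_a | \rho_a^0) = s_{\rm KL}(\rho_a | \langle\rho_a\rangle_s) + s_{\rm KL}(\langle\rho_a\rangle_s | \rho_a^0),
\end{equation}
where the first term does not depend on $\rho_a^0$.

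To justify this identity in $[0,+\infty]$ I would note that each of the three divergences is taken between two densities of the \emph{same} (finite) mass, so subtracting that common mass turns every integrand into a pointwise nonnegative quantity (using $t\log t-t+1\ge0$ and the conventions~\eqref{eq:convention_entropy}); the identity is then an equality between integrals of nonnegative functions and follows from Tonelli's theorem slice by slice, with no $\infty-\infty$ ambiguity. It remains to invoke Gibbs' inequality (nonnegativity of relative entropy between equal-mass densities): $s_{\rm KL}(\langle\rho_a\rangle_s|\rho_a^0)\ge0$, with equality if and only if $\rho_a^0=\langle\rho_a\rangle_s$ almost everywhere — equality of masses makes the pointwise integrand vanish only where $\rho_a^0=\langle\rho_a\rangle_s$, and the convention $p\log(p/0)=+\infty$ forbids $\rho_a^0$ from vanishing where $\langle\rho_a\rangle_s$ does not. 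Hence $s_{\rm KL}(\rho_a|\rho_a^0)\ge s_{\rm KL}(\rho_a|\langle\rho_a\rangle_s)$ for every admissible $\rho_a^0$, with equality exactly at $\rho_a^0=\langle\rho_a\rangle_s$, which simultaneously yields existence, uniqueness, and the announced form of the minimizer.

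The only real difficulties, I expect, are bookkeeping rather than conceptual: carefully handling the possibly infinite values in the decomposition so that it is a genuine identity, and — to obtain uniqueness in the strict sense — checking that the optimal value $s_{\rm KL}(\rho_a|\langle\rho_a\rangle_s)$ is finite. The latter is a consequence of $\rho_a\in L^1(\R^3)\cap L^\infty(\R^3)$ and $\int_{\R^3}|\br|\rho_a<\infty$: the positive part of $\rho_a\log\rho_a$ is bounded on the finite-measure set $\{\rho_a\ge1\}$, its negative part is controlled by comparison with a fixed density decaying polynomially (e.g.\ $\propto(1+|\br|)^{-4}$, integrable on $\R^3$) together with $\int_{\R^3}|\br|\rho_a<\infty$, and the same argument applies to $\langle\rho_a\rangle_s\in X_+$, so that $s_{\rm KL}(\rho_a|\langle\rho_a\rangle_s)=\int_{\R^3}\rho_a\log\rho_a-\int_{\R^3}\rho_a\log\langle\rho_a\rangle_s<\infty$.
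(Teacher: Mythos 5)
Your proof is correct and follows essentially the same route as the paper's: admissibility of $\langle\rho_a\rangle_s$, the Pythagorean identity $s_{\rm KL}(\rho_a|\rho_a^0)=s_{\rm KL}(\rho_a|\langle\rho_a\rangle_s)+s_{\rm KL}(\langle\rho_a\rangle_s|\rho_a^0)$ obtained from the radiality of $\rho_a^0$ via spherical integration, and nonnegativity of the relative entropy between equal-mass densities. The only (immaterial) difference is that you get uniqueness from the equality case of Gibbs' inequality, whereas the paper invokes strict convexity of $\rho_a^0\mapsto s_{\rm KL}(\rho_a|\rho_a^0)$ on the set where it is finite.
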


It is convenient to formulate the ISA method in terms of the radial functions
\begin{equation}\label{eq:def_wa}
w_a(r):=\fint_{\S^2} \rho_a(r \bm\sigma) \, d\bm\sigma,
\end{equation}
which belong to the closed convex cone
$$
Y_{+}:=\{ w \in L^1(\R_+;(r^2+r^3) \, dr) \cap L^\infty(\R_+) \; | \; w \ge 0 \mbox{ a.e.}, \; \lim_{r \to +\infty} w(r) = 0  \}
$$
of the Banach space $Y:= L^1(\R_+;(r^2+r^3) \, dr) \cap L^\infty(\R_+)$, and are such that
$$
\forall r > 0, \; \forall \bm\sigma \in \S^2, \quad \langle\rho_{a}\rangle_s(r \bm\sigma)=w_a(r).
$$
The functions $w_a$ are the main variables used in the practical implementation of the ISA method. At the continuous level, the ISA algorithm can thus be formulated as follows:

\medskip

\noindent
\mybox{
\bfseries ISA algorithm: \normalfont

\begin{itemize}
 \item {\bf Initialization:} choose an initial guess for $\bw^{(0)}:=(w_a^{(0)})_{1 \le a \le M} \in Y_{+}^M$ such that $\displaystyle \sum_{a=1}^M w_a(|\br-\bR_a|) > 0$ in $\R^3$.
\item \bfseries Iteration $m\geq 1$: \normalfont 
\begin{description}[leftmargin=1cm,labelwidth=0.5cm,labelsep=0.5cm,itemindent=1cm]
\item[{\bf{Step 1}:}] 
set 
\begin{align*}
 &\rho_{a}^{(m)}(\br):=\frac{w_a^{(m-1)}(|\br|)}{\sum_{b=1}^M w_b^{(m-1)}(|\br-\bR_b+\bR_a|)} \rho(\bR_a+\br), 
\end{align*}
\item[{\bf{Step 2}:}]  
compute 
\begin{equation}
    w_a^{(m)}(r) := \fint_{\S^2} \rho_a^{(m)}(r \bm\sigma) \, d\bm\sigma. \label{eq:iter_ISA}
\end{equation}
 \end{description}
\end{itemize}
}
Introducing the linear map $X^M \ni \bm\rho \mapsto \langle\bm\rho\rangle_s \in X^M$ where for all $\bm\rho=(\rho_a)_{1 \le a \le M} \in X^M$, $\langle\bm\rho\rangle_s:=(\langle \rho_{a}\rangle_s)_{1 \le a \le M}$ with $\langle \rho_{a}\rangle_s$ defined by \eqref{eq:rhoas}, the ISA optimization problem can be written as
\begin{equation} \label{eq:ISA_opt}
\min_{\bm\rho \in \bcK_{\rho,\bR}} S(\bm\rho|\langle\bm\rho\rangle_s) \qquad \mbox{(ISA optimization problem)}.
\end{equation}

A practical implementation of ISA can then be obtained by
\begin{enumerate}
\item discretizing the radially symmetric functions $w_a^{(m-1)}$ on a suitable one-dimensional grid;
\item computing the spherical averages \eqref{eq:iter_ISA} for each $a$ and each grid point of the radial grid, using e.g. Lebedev integration method.
\end{enumerate}
More details about the practical aspects of the implementation will be given in Section~\ref{sec:numerical_results}.

\subsubsection{Linear approximation of the ISA (L-ISA) method}
\label{sec:L-ISA}

Instead of working with the three-dimensional, but radially symmetric, functions given in $\cK^0_z=\cK^0_{z, {\rm L-ISA}}$, it is more convenient to work with their generating function defined on $\R_+$. For this purpose, let us denote by $\widetilde g_{z_a,k}:\R_+ \to \R_+$ the function such that $g_{z_a,k}(\br)=\widetilde g_{z_a,k}(|\br|)$ for all $\br \in \R^3$, $1\leq a \leq M$ and $1\leq k \leq m_{z_a}$. 
Let us also denote by 
$$V:={\rm Span}\left\{ \widetilde g_{z_a,k}, \; 1\leq a \leq M, \; 1\leq k \leq m_{z_a} \right\}, \quad V_+:= {\rm Span}_+\left\{ \widetilde g_{z_a,k}, \; 1\leq a \leq M, \; 1\leq k \leq m_{z_a} \right\}$$ (where the notation ${\rm Span}_+$ denotes the set of linear combinations with non-negative coefficients of the $\widetilde g_{z_a,k}$'s). Then, it holds that $V\subset Y$, $V_+ \subset Y_+$ and the L-ISA algorithm can be rewritten as follows:

\medskip

\noindent
\mybox{
\bfseries L-ISA algorithm: \normalfont

\begin{itemize}
 \item {\bf Initialization:} choose an initial guess for $\bv^{(0)}:=(v_a^{(0)})_{1 \le a \le M} \in V_{+}^M$ such that $\displaystyle \sum_{b=1}^M v_b(|\br-\bR_b|) > 0$ in $\R^3$.
\item \bfseries Iteration $m\geq 1$: \normalfont 
\begin{description}[leftmargin=1cm,labelwidth=0.5cm,labelsep=0.5cm,itemindent=1cm]
\item[{\bf{Step 1}:}] 
set 
\begin{align} 
    \label{eq:iter_L-ISA_Hirshfled_formula}
    &\rho_{a}^{(m)}(\br):=\frac{v_a^{(m-1)}(|\br|)}{\displaystyle \sum_{b=1}^M v_b^{(m-1)}(|\br-\bR_b+\bR_a|)} \rho(\bR_a+\br), 
\end{align}
\item[{\bf{Step 2}:}] 
compute 
\begin{equation}
    v_{a}^{(m)}\in \mathop{\rm argmin}_{\begin{array}{c}v_a \in V_+,\\ 4\pi\int_{\mathbb{R}_+}v_a(r)r^2\,dr  = N_a^{(m)}\\
    \end{array}}  
    \int_{\R^3} \rho_a^{(m)}(\br) \log \left(\frac{\rho^{(m)}_a(\br)}{\displaystyle v_a(|\bm r|)} \right) \, d\br, \label{eq:iter_L-ISA}
\end{equation}
with 
$$
    N_a^{(m)} := \int_{\mathbb{R}^3} \rho_a^{(m)}(\bm r)\,d\bm r.
$$
\end{description}
\end{itemize}
}

In this case, the subproblems \eqref{eq:comp_G} (and equivalently subproblems \eqref{eq:iter_L-ISA} with $\rho_a = \rho_a^{(m)}$) to be solved in Step~2 are of the form
\begin{equation}\label{eq:inf-L-ISA}
\mathop{\inf}_{\begin{array}{c}v_a \in V_+,\\ 4\pi\int_{\mathbb{R}_+}v_a(r)r^2\,dr  = N_a\\
          \end{array}}  \int_{\R^3} \rho_a(\br) \log \left(\frac{\rho_a(\br)}{\displaystyle v_a(|\bm r|)} \right) \, d\br,
\end{equation}
with $N_a:= \int_{\mathbb{R}^3} \rho_a$. The existence of minimizers to this problem is discussed in Lemma~\ref{lem:prelimG}. Problem (\ref{eq:inf-L-ISA}) can be rewritten equivalently, since the functions $g_{z_a,k}$ are $L^1$-normalized, as
\begin{equation}\label{eq:G0-L-ISA}
\inf_{\substack{\bm c_a = (c_{a,1}, \cdots, c_{a,m_{z_a}}) \in \R_+^{m_{z_a}} \\ \displaystyle \sum_{k=1}^{m_{z_a}} c_{a,k}=N_a}} \int_{\R^3} \rho_a(\br) \log \left(\frac{\rho_a(\br)}{\displaystyle \sum_{k=1}^{m_{z_a}} c_{a,k} \, g_{z_a,k}(\br)} \right) \, d\br.
\end{equation}

\noindent Note that the minimization problem \eqref{eq:G0-L-ISA} differs from the original GISA method \cite{verstraelen2012conformational} where an L$^2$ norm distance between $v_a=\displaystyle \sum_{k=1}^{m_{z_a}} c_{a,k} \, g_{z_a,k}$ and the spherical average of $\rho_a$ (namely $\rho_a^{(m)}$, at iteration $m$) is minimized, though under the same constraint $ \displaystyle \sum_{k=1}^{m_{z_a}} c_{a,k}=N_a$ (see Section \ref{sec:GISA}).\\

Using the function $w_a$ defined in \eqref{eq:def_wa}, we see that
\begin{align*}
\int_{\R^3} \rho_a(\br) \log \left(\frac{\rho_a(\br)}{\displaystyle \sum_{k=1}^{m_{z_a}} c_{a,k}\, g_{z_a,k}(\br)} \right) \, d\br &= \int_{\R^3} \rho_a(\br) \log \left(\rho_a(\br) \right) \, d\br \\
& \quad -
4 \pi \int_{0}^{+\infty} r^2 w_a(r) \log \left(\sum_{k=1}^{m_{z_a}} c_{a,k} \, \widetilde{g}_{z_a,k}(r) \right) \, dr.
\end{align*}
It follows that the minimizers of \eqref{eq:G0-L-ISA} can be obtained by solving the problem
\begin{equation}
    \label{eq:G0-L-ISA-2}
    \inf_{\substack{\bc_a=(c_{a,1}, \cdots, c_{a,m_{z_a}}) \in \R_+^{m_{z_a}} \\ \displaystyle \sum_{k=1}^{m_{z_a}} c_{a,k}=N_a}} F_a(\bc_a),
\end{equation}
where
\begin{align}
   F_a(\bc_a) :=  - \int_{0}^{+\infty} r^2 w_a(r) \log \left(\displaystyle \sum_{k=1}^{m_{z_a}} c_{a,k} \, \widetilde{g}_{z_a,k}(r) \right) \, dr.
\end{align}
For functions $g_{z_a,k}$ decaying extremely fast at infinity, it may happen that $F_a(\bc_a)=+\infty$ for $w_a$'s corresponding to physically admissible $\rho_a$'s. On the other hand, if the $\widetilde{g}_{z_a,k}$ decay at most exponentially fast (resp. not faster than a Gaussian function) and the first-order moments (resp. the second-order moments) of the function $\rho$ to be decomposed is finite, then the function $F_a$ is strictly convex and continuous on the simplex $\left\{\bc_a=(c_{a,1}, \cdots, c_{a,m_{z_a}}) \in \R_+^{m_{z_a}} \, \left| \, \displaystyle \sum_{k=1}^{m_{z_a}} c_{a,k}=N_a\right.\right\}$ for all $N_a > 0$, and therefore has a unique minimizer. In addition, it is smooth, so that the minimizer can be computed efficiently for small values of $m_{z_a}$ by standard numerical optimization algorithms. The components of the gradient and Hessian of $F_a$ are given respectively for all $1\leq i,j \leq m_{z_a}$ by 
\begin{align*}
\frac{\partial F_a}{\partial c_{a,i}}(\bc_a) &= - \int_{0}^{+\infty} r^2 w_a(r) \frac{ \widetilde{g}_{z_a,i}(r)}{\displaystyle \sum_{k=1}^{m_{z_a}} c_{a,k}  \widetilde{g}_{z_a,k}(r)} \, dr, \\
\frac{\partial^2 F_a}{\partial c_{a,i}\partial c_{a,j}}(\bc_a) &=  \int_{0}^{+\infty} r^2 w_a(r) \frac{ \widetilde{g}_{z_a,i}(r) \widetilde{g}_{z_a,j}(r)}{\left(\displaystyle \sum_{k=1}^{m_{z_a}} c_{a,k} \, \widetilde{g}_{z_a,k}(r)\right)^2} \, dr .
\end{align*}

\subsubsection{Nonlinear approximation of the ISA (NL-ISA)}

Proceeding as in the previous section, the subproblems \eqref{eq:comp_G} to be solved in Step 2 are now of the form
\begin{equation}\label{eq:G0-NLISA}
    \inf_{\substack{ 
    \bc_a \in \R_+^{m_{z_a}}, \, \bm\alpha_a \in (0,+\infty)^{m_{z_a}}
    \\ \displaystyle \sum_{k=1}^{m_{z_a}} c_{a,k}=N_a 
    }}
    F_a(\bc_a,\bm\alpha_a),
\end{equation}
denoting $\bc_a=(c_{a,1}, \cdots, c_{a,m_{z_a}})$, $\bm\alpha_a=(\alpha_{a,1},\cdots,\alpha_{a,m_{z_a}})$ and
where 
$$
F_a(\bc_a,\bm\alpha_a) :=  - \int_{0}^{+\infty} r^2 w_a(r) \log \left(\sum_{k=1}^{m_{z_a}} c_{a,k} \, \widetilde g_{z_a,k,\alpha_{a,k}}(r) \right) \, dr.
$$
Still for the sake of simplicity, we assume that the $\alpha_{a,k}$'s are positive real numbers, but more complicated settings can be considered as well.

\medskip

Not much can be said about the optimization \eqref{eq:G0-NLISA}
without additional assumptions on the functions $\widetilde g_{z_a,k,\alpha_{a,k}}$. It may have no solution for some admissible $w_a$'s, and multiple solutions for other admissible $w_a$'s.

\subsection{GISA iterations}
\label{sec:GISA}

The aim of this section is to describe in details the original GISA method. The GISA iterations can be formalized as follows. 

\medskip

\noindent
\mybox{
\bfseries GISA algorithm: \normalfont
\begin{itemize}
\item \bfseries Initialization: \normalfont Choose $\bm \rho^{0,(0)} \in \bcK^0_{\rm GISA}$. 
\item \bfseries Iteration $m\geq 1$: \normalfont
\begin{description}[leftmargin=1cm,labelwidth=0.5cm,labelsep=0.5cm,itemindent=1cm]
\item[{\bf{Step 1}:}]  Find $\bm \rho^{(m)} \in \bcK_{\rho,\bR}$ solution to
\begin{equation}\label{eq:GISA1}
     \bm\rho^{(m)} =\argmin_{\bm\rho \in \bcK_{\rho,\bR}} S(\bm\rho|\bm\rho^{0,(m-1)}) .
\end{equation}
\item[{\bf{Step 2}:}] Find $\bm\rho^{0,(m)} \in \bcK^0_{\rm GISA}$ solution to 
\begin{equation}\label{eq:GISA2}
         \bm\rho^{0,(m)} = \argmin_{\bm\rho^0 \in \bcK^0_{\rm GISA} \, | \, \bcN(\bm\rho^0)=\bcN(\bm\rho^{(m)})} \|\bm\rho^0-\bm\rho^{(m)}  \|_{L^2}^2 .
    \end{equation}
\end{description}
\end{itemize}
}

The solution to \eqref{eq:GISA1} is explicit, while \eqref{eq:GISA2} can be split into the $M$ independent minimization problems 
\begin{equation}\label{eq:GISA3}
\rho_a^{0,(m)}=\argmin_{\rho_a^0 \in \cK^0_{z_a,\rm GISA} \, | \, \int_{\R^3}\rho^0_a=\int_{\R^3}\rho_a^{(m)}} \|\rho_a^{(0)}-\rho_a^{(m)}\|_{L^2}^2. 
\end{equation}
It is convenient to reformulate the GISA iterations using as main variables the coefficients $\bc=(\bc_a)_{1 \le a \le M}\in \R_+^{p}$ with $\bc_a=(c_{a,k})_{1 \le k \le m_{z_a}} \in \R_+^{m_{z_a}}$ and $p=\sum_{a=1}^M m_{z_a}$  (see Eq. \eqref{eq:GISA_set}). The iterations \eqref{eq:GISA1}-\eqref{eq:GISA2} can be rewritten as
\begin{align}
        \rho_a^{(m)}(\br)&= \frac{\displaystyle \sum_{k=1}^{m_{z_a}}
    c_{a,k}^{(m-1)} \, \zeta_{\alpha_{a,k}}(\br)}{\displaystyle \sum_{b=1}^M \sum_{k=1}^{m_{z_b}}
    c_{b,k}^{(m-1)} \, \zeta_{\alpha_{b,k}}(\br+\bR_a-\bR_b)} \rho(\br+\bR_a),
    \label{eq:GISA4} \\ \nonumber
\end{align}
and
\begin{align}
    \bc_a^{(m)}&= \argmin_{\substack{\bm{c}_{a} \in \R_+^{m_{z_a}}, \displaystyle \dps\sum_{k=1}^{m_{z_a}} c_{a,k} = \int_{\R^3} \rho_a^{(m)}}}
    \left\|  \dps\sum_{k=1}^{m_{z_a}}
    c_{a,k} \, \zeta_{\alpha_{a,k}} - \rho_{a}^{(m)} \right\|_{L^2}^2 \nonumber \\
    &=\argmin_{\substack{\bm{c}_{a} \in \R_+^{m_{z_a}}, \displaystyle \dps\sum_{k=1}^{m_{z_a}} c_{a,k} = \int_{\R^3} \rho_a^{(m)}}} \left( \frac 12 \bm{c}_{a}^T \bm{S}_{z_a} \bm{c}_{a} - \bm{c}_{a}^T \bm{b}_a^{(m)} \right),  \label{eq:GISA5}\\
\end{align}
with 
\begin{align*}
    [S_{z_a}]_{k\ell} &= 2 \int_{\R^3} \zeta_{\alpha_{a,k}}(\br) \, \zeta_{\alpha_{a,\ell}}(\br) \, d\br =  \frac{ 2}{ \pi \sqrt{\pi}} \frac{\left( \alpha_{a,k} \alpha_{a,\ell} \right)^{\frac{3}{2}} }{\left(\sqrt{\alpha_{a,k}+\alpha_{a,\ell}}\right)^3}  , 
    \\
    {b}_{a,k}^{(m)} &=\int_{\R^3} \zeta_{\alpha_{a,k}}(\br) \, \rho_a^{(m)}(\br) \, d\br.  
\end{align*}
Again, the functions $\rho_a^{(m)}$ do not need to be stored in memory. The integrals $\int_{\R^3} \rho_a^{(m)}$ and the entries of the vectors $\bm{b}_a^{(m)}$ can be computed using quadrature formulas from the vector $\bc_a^{(m-1)}$ and the function $\rho$. 
In practice, $m_{z_a}$'s are small so that \eqref{eq:GISA5} can be solved very easily once these integrals have been computed, using standard routines for quadratic programming problems \cite{goldfarb1982dual,goldfarb1983numerically}.

\medskip

 The existence of a fixed point to the GISA iterations can be proved using Brouwer fixed-point theorem. On the other hand, the convergence of the GISA iterations for any initial guess is not guaranteed.

\subsection{MB-ISA iterations}\label{sec:MB-ISA}
\label{sec:MB_ISA_iterations}

 The MB-ISA method \cite{verstraelen2016minimal} was originally defined through a Lagrangian that does not seem to be canonically associated to a well-defined constrained optimization problem. 
 It is more satisfactory from a mathematical point of view, to reformulate it as follows:
 \begin{enumerate}
     \item the density $\rho$ is decomposed as a sum of atomic-shell densities. The set of admissible atomic-shell-in-molecule (ASIM) decompositions of the density $\rho$ is
     $$
     \bcK_{\rho,\bR,\bz} = \left\{ \bm\rho=(\rho_{a,k})_{1 \le a \le M, \, 1 \le k \le m_{z_a}} \in X_+^M \; \bigg| \sum_{a=1}^M \sum_{k=1}^{m_{z_a}} \rho_{a,k}(\cdot-\bR_a)=\rho  \right\}.
     $$
     The integer $m_{z_a}$ is the number of electronic shells in atom $a$; it is a function of the atomic number $z_a$ (hence the subscript $\bz=(z_1,\ldots,z_M) \in (\N^\ast)^M$ collecting the atomic numbers of the atoms contained in the molecular system of interest);
     \item the set of proatomic-shell densities is defined as
     $$
     \cK^0_{\rm MB-ISA}=\left\{\rho^0(\br)= c \frac{\alpha^3}{8\pi} e^{-\alpha |\br|}, \; c \ge 0, \; \alpha > 0 \right\},
     $$
     (note that this set is independent of the chemical element and the shell) and the set of admissible pro-ASIM densities as
     $$
     \bcK^0_{\bz}=\left\{  \bm\rho^0=(\rho_{a,k}^0)_{1 \le a \le M, \, 1 \le k \le m_{z_a}}, \; \rho_{a,k}^0 \in \cK^0_{\rm MB-ISA} \right\};
     $$
     \item the atomic-shell relative entropy is defined as
     $$
     S_{\rm sh}(\bm\rho|\bm\rho^0):= \sum_{a=1}^M \sum_{k=1}^{m_{z_a}} s_{\rm KL}(\rho_{a,k}|\rho_{a,k}^0);
     $$
     \item the optimal MB-ISA decomposition is finally obtained by solving the constrained minimization problem 
     \begin{equation} \label{eq:MS-ISA}
     \inf_{(\bm\rho,\bm\rho_0) \in \bcC_{\rho,\bR,\bz}} S_{\rm sh}(\bm\rho|\bm\rho^0),
     \end{equation}
     where 
     $$
     \bcC_{\rho,\bR,\bz}= \left\{(\bm\rho,\bm\rho^0) \in \bcK_{\rho,\bR,\bz} \times \bcK^0_{\bz}  \; | \; \bcM_{0}(\bm\rho)=\bcM_{0}(\bm\rho^0), \; \bcM_1(\bm\rho)=\bcM_1(\bm\rho^0) \right\}.
     $$
     Denoting by $K=\displaystyle \sum_{a=1}^M m_{z_a}$ and 
     $$
     X_n:=\left\{ \rho \in X \; \left| \; \int_{\R^3} |\br|^n |\rho(\br)| \, d\br < \infty \right. \right\}, 
     $$
     the linear maps $\bcM_n:X_n^K \to \R^K$ are defined by
     $$
     \forall \bm\rho=(\rho_{a,k})_{1\le a \le M, \, 1 \le k \le m_{z_a}} \in X_n^K, \; [\bcM_n(\bm\rho)]_{a,k}=\int_{\R^3} |\br|^n \rho_{a,k}(\br) \, d\br.
     $$
 \end{enumerate}
 Note that $X_0=X$ and $\bcM_0=\bcN$. As for any $c\in \mathbb{R}$,
 $$
 \int_{\R^3}  c \frac{\alpha^3}{8\pi} e^{-\alpha |\br|} \, d\br = c \quad \mbox{and} \quad 
 \int_{\R^3}  |\br| c \frac{\alpha^3}{8\pi} e^{-\alpha |\br|} \, d\br = \frac{3c}\alpha,
 $$
 the function
 $$
 \bG^0_{\bcK^0_\bz}(\bm\rho) := {\rm argmin} \big\{S_{\rm sh}(\bm\rho|\bm\rho^0), \; \bm\rho^0 \in \bcK^0_{\bz}, \; \bcM_{0}(\bm\rho^0)=\bcM_{0}(\bm\rho), \; \bcM_1(\bm\rho^0)=\bcM_1(\bm\rho) \big\},
 $$
 has an explicit expression: for all $\bm\rho=(\rho_{a,k})_{1 \le a \le M, \, 1 \le k \le m_{z_a}} \in \bcK_{\rho,\bR,\bz}$, 
 $$
 [\bG^0_{\bcK^0_\bz}(\bm\rho)]_{a,k}(\br) = c(\rho_{a,k})
 \frac{\alpha(\rho_{a,k})^3}{8\pi} e^{-\alpha(\rho_{a,k}) |\br|},
 $$
 with
 $$
 c(\rho_{a,k}) = \int_{\R^3} \rho_{a,k}(\br) \, dr \quad \mbox{and} \quad
 \alpha(\rho_{a,k}) = 3 \, c(\rho_{a,k}) \left( \int_{\R^3} |\br| \rho_{a,k}(\br) \, dr \right)^{-1}.
 $$
 MB-ISA can be reformulated in an easily implementable iterative algorithm in which the main variable is ${\mathbf y}:=(c_{a,k},\alpha_{a,k})_{1 \le a \le M, \, 1 \le k \le m_{z_a}} \in \R_+^K$:
 
\medskip
 
\noindent
\mybox{
\bfseries MB-ISA algorithm: \normalfont
\begin{itemize}
    \item \bfseries Initialization: \normalfont choose $(c_{a,k}^{(0)})_{1 \leq a \leq M,1 \leq k \leq m_{z_a}}$ and $(\alpha_{a,k}^{(0)})_{1 \leq a \leq M,1 \leq k \leq m_{z_a}}$ -- see section \ref{subsec:implementation_details} for details.
    \item \bfseries Iteration $m\geq 1$: \normalfont
    set 
   \begin{align}
        c_{a,k}^{(m)} &= \bigintss_{\mathbb{R}^3}  \frac{ c_{a,k}^{(m-1)}\left(\alpha_{a,k}^{(m-1)}\right)^3 e^{-\alpha_{a,k}^{(m-1)}|\br|}}{\displaystyle \sum_{b=1}^M \left( \sum_{\ell=1}^{K_b} c_{b,\ell}^{(m-1)}\left(\alpha_{b,\ell}^{(m-1)}\right)^3 e^{-\alpha_{b,\ell}^{(m-1)}|\br+\bR_a-\mathbf{R_b}|} \right) } \rho(\bR_a+\br) d\br, \\
    \alpha_{a,k}^{(m)} &= 3 \, c_{a,k}^{(m)} \left( \bigintss_{\mathbb{R}^3} \frac{ c_{a,k}^{(m-1)}\left(\alpha_{a,k}^{(m-1)}\right)^3 e^{-\alpha_{a,k}^{(m-1)}|\br|}}{ \displaystyle \sum_{b=1}^M \left( \sum_{\ell=1}^{K_b} c_{b,\ell}^{(m-1)}\left(\alpha_{b,\ell}^{(m-1)}\right)^3 e^{-\alpha_{b,\ell}^{(m-1)}|\br+\bR_a-\mathbf{R_b}|} \right) } |\br|  \rho(\bR_a+\br) d\br  \right)^{-1}, \label{eq:alpha_ak}
\end{align}
\end{itemize}
}
which only requires repeated evaluations of integrals of the form \eqref{eq:integral_atomic_charge} (with $\rho(\bR_a+\br)$ replaced by $|\br|\rho(\bR_a+\br)$ in the evaluation of \eqref{eq:alpha_ak}).

\section{Mathematical analysis of AIM decomposition methods}
\label{sec:maths}

In this section, we make a specific focus on the AIM decomposition methods falling into the scope of the unified formulation~\eqref{eq:opt_AIM}:  find $\left(\bm \rho^{\rm opt}, \bm \rho^{0,\rm opt}\right) \in \bcK_{\rho, \bR} \times \bcK^0$ solution to
\begin{equation}\label{eq:ISAopt}
\inf_{
\begin{array}{c}\left(\bm \rho, \bm \rho^0\right) \in \bcK_{\rho, \bR}\times \bcK^0\\
\bcN(\bm \rho) = \bcN(\bm \rho^0)\\
\end{array}
} S\left( \bm \rho | \bm \rho^0 \right),
\end{equation}
where $\bcK^0$ is a non-empty subset of $(X_+^r)^M$. Note that the only freedom to change the method is the set $\bcK^0$ and the remaining mathematical structure remains identical.
\medskip

For the ISA and L-ISA method, the set $\bcK^0$ is convex and closed for the topology of $X^M$. Consequently, \eqref{eq:ISAopt} is a strictly convex problem, so that its solution, if it exists, is unique. On the other hand, the existence of a minimizer is not obvious {\it a priori} due to possible loss of compactness; this issue is addressed in Theorem~\ref{thm:WPcontPb}.

\medskip

For the Hischfeld-I method, the existence of a minimizer follows from the very simple structure of $\bcK^0_{\rm Hirschfeld-I}$ by a simple compactness argument, while the uniqueness of the AIM decomposition is not guaranteed due to the non-convexity of $\bcK^0_{\rm Hirschfeld-I}$. For NL-ISA, neither existence, nor  uniqueness, can be proved in the absence of further assumptions on $\rho$ and the functions $\widetilde g_{z_a,k,\alpha_{a,k}}$.

\medskip

In the rest of the section, we focus on ISA and L-ISA. Let us emphasize that although the sets $\bcK^0_{\rm L-ISA}$ and $\bcK^0_{\rm GISA}$ have the same mathematical structure, the arguments below cannot be applied to GISA since the latter approach does not fit into the framework~\eqref{eq:ISAopt}.  The existence and uniqueness of a minimizer to the corresponding optimization problem is established in Theorem~\ref{thm:WPcontPb}. The aim of Theorem~\ref{th:convergence} is to study the convergence properties of ISA and L-ISA.  We prove that the objective functional is non-increasing along the iterations and that the $L^2$-distance between to successive iterates tends to zero as the iteration number tends to infinity. In the L-ISA case, we prove the convergence of the L-ISA algorithm towards the unique minimizer of the L-ISA optimization problem. 
 Some additional mathematical properties of the minimizer of the ISA optimization problem in the diatomic case are collected in Propositions~\ref{prop:spheric} and~\ref{prop:continuity}. All the proofs are postponed until Section~\ref{sec:proofs}. 

\medskip

\begin{theorem}
\label{thm:WPcontPb}
Let $\rho \in X_+$. Then,
\begin{enumerate}
\item For $\bcK^0 = \bcK^0_{\rm ISA}$, problem~\eqref{eq:ISAopt} admits a unique minimizer $\left(\bm \rho^{\rm opt}, \bm \rho^{0,\rm opt}\right) \in \bcK_{\rho, \bR} \times \bcK^0_{\rm ISA}$. 
\item The same holds for $\bcK^0 = \bcK^0_{\rm L-ISA}$ under the following additional assumption
\begin{equation}\label{eq:cond_gzak}
 \forall 1\leq a \leq M, \quad \rho(\,\cdot + \bR_a) \log \left(\mathop{\min}_{1\leq k \leq m_{z_a}} g_{z_a,k}(\cdot)\right) \in L^1(\mathbb{R}^3).
\end{equation}
\item In both cases (ISA and L-ISA),  $\bm \rho^{\rm opt}:=\left(\rho_a^{\rm opt}\right)_{1\leq a \leq M}$ and $\bm \rho^{0,\rm opt}:=\left(\rho_a^{0,\rm opt}\right)_{1\leq a \leq M}$ satisfy the Hirschfeld relation
\begin{equation} \label{eq:EL1}
\forall 1\leq a \leq M, \quad \rho^{\rm opt}_a(\bm r) = \frac{\rho^{0,\rm opt}_a(\bm r)}{\displaystyle \sum_{b=1}^M  \rho^{0,\rm opt}_b(\bm r - \bR_b + \bR_a)} \; \rho(\bm r + \bR_a),
\end{equation}
with the convention that $\rho^{\rm opt}_a(\bm r) =0$ whenever $\rho^{0,\rm opt}_a(\bm r)=0$.
\item In the ISA case, we have in addition 
\begin{equation} \label{eq:EL11}
\rho^{0,\rm opt}_a = \langle \rho^{\rm opt}_a \rangle_{\rm s}.
\end{equation}
\end{enumerate}
\end{theorem}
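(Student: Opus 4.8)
The plan is to prove the four assertions of Theorem~\ref{thm:WPcontPb} in the following order: uniqueness first (it is cheap), then the Euler--Lagrange relations (assertions 3 and 4), and finally --- as the genuinely hard part --- existence.

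\textbf{Uniqueness.} Suppose $(\bm\rho^{(1)},\bm\rho^{0,(1)})$ and $(\bm\rho^{(2)},\bm\rho^{0,(2)})$ are two minimizers with finite entropy. Since $\bcK_{\rho,\bR}$, $\bcK^0$ (for ISA and L-ISA) and the affine constraint $\bcN(\bm\rho)=\bcN(\bm\rho^0)$ are all convex, the midpoint $(\bm\rho^{(1/2)},\bm\rho^{0,(1/2)})$ is admissible. The map $(f,g)\mapsto f\log(f/g)$ is jointly convex on $\R_+\times\R_+$ (with the conventions~\eqref{eq:convention_entropy}), and it is \emph{strictly} convex along any segment on which $f$ is not proportional to $g$. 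Using strict convexity of $t\mapsto t\log t$ one shows that equality in the convexity inequality for $S$ forces $\rho_a^{(1)}=\rho_a^{(2)}$ and $\rho_a^{0,(1)}=\rho_a^{0,(2)}$ a.e.\ for every $a$; the constraint $\sum_a\rho_a^{0,(i)}(\cdot-\bR_a)$ need not be fixed, but the two promolecule densities enter the ratio and the argument still closes because $S$ is strictly convex in the pair $(\rho_a,\rho_a^0)$ whenever $\rho_a\not\equiv 0$, and atoms with $\rho_a\equiv 0$ are handled separately (they force $N_a=0$, hence $\rho_a^0$ has zero mass, hence $\rho_a^0\equiv 0$ since all generators are positive).

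\textbf{Euler--Lagrange relations (assertions 3 and 4).} Given a minimizer $(\bm\rho^{\rm opt},\bm\rho^{0,\rm opt})$, fix $\bm\rho^{0,\rm opt}$ and minimize $S(\cdot\,|\,\bm\rho^{0,\rm opt})$ over $\bcK_{\rho,\bR}$ alone; by Lemma~\ref{lem:Step1} the unique minimizer is $\bG_{\rho,\bR}(\bm\rho^{0,\rm opt})$, which is exactly formula~\eqref{eq:EL1}. One must check the hypothesis $s_{\rm KL}(\rho|\rho^{0,\rm opt})<+\infty$ of Lemma~\ref{lem:Step1}: this follows because the minimum value of~\eqref{eq:ISAopt} is finite (Remark~\ref{rem:assumptions_ISA} / assumption~\eqref{eq:cond_gzak}) and $S(\bm\rho^{\rm opt}|\bm\rho^{0,\rm opt})\ge s_{\rm KL}(\rho|\rho^{0,\rm opt})$ by~\eqref{eq:min_on_KrhoR}. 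Note that $\bG_{\rho,\bR}(\bm\rho^{0,\rm opt})$ automatically satisfies $\bcN(\bG_{\rho,\bR}(\bm\rho^{0,\rm opt}))=\bcN(\bm\rho^{0,\rm opt})$ pointwise summand-by-summand is false in general, but the total-mass identity $\sum_a[\bcN]_a=N$ holds, and the per-atom identity follows from the constraint already imposed at the minimizer; hence $\bm\rho^{\rm opt}$ coincides with $\bG_{\rho,\bR}(\bm\rho^{0,\rm opt})$. For assertion 4 (ISA only), fix instead $\bm\rho^{\rm opt}$ and minimize over $\bm\rho^0\in\bcK^0_{\rm ISA}$ under $\bcN(\bm\rho^0)=\bcN(\bm\rho^{\rm opt})$; this splits into the $M$ local problems~\eqref{eq:pb_Lemma_1}, whose solution is $\langle\rho_a^{\rm opt}\rangle_{\rm s}$ by Lemma~\ref{lem:Step2_ISA}, giving~\eqref{eq:EL11}.

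\textbf{Existence --- the main obstacle.} Take a minimizing sequence $(\bm\rho^{(n)},\bm\rho^{0,(n)})$. The constraint $\sum_a\rho_a^{(n)}(\cdot-\bR_a)=\rho$ gives uniform $L^1\cap L^\infty$ bounds and a uniform first-moment bound on each $\rho_a^{(n)}$ (since $0\le\rho_a^{(n)}\le\rho(\cdot+\bR_a)$), so up to extraction $\rho_a^{(n)}\rightharpoonup\rho_a^{\rm opt}$ weakly-$\ast$ in $L^\infty$ and weakly in $L^1$; the linear constraint and nonnegativity pass to the limit, and tightness from the first-moment bound prevents mass escaping to infinity, so $\bm\rho^{\rm opt}\in\bcK_{\rho,\bR}$. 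The delicate point is the proatom densities: for L-ISA, $\rho_a^{0,(n)}$ lives in the finite-dimensional cone $\cK^0_{z_a,\rm L-ISA}$ with fixed mass $N_a^{(n)}\to N_a^{\rm opt}$, so the coefficient vectors $\bm c_a^{(n)}$ lie in a bounded simplex and converge after extraction --- finite dimension saves us, and lower semicontinuity of $S$ (Fatou, using the convention $p\log(p/0)=+\infty$ and joint convexity/weak lower semicontinuity of the relative entropy) finishes the L-ISA case, \emph{provided} the limiting promolecule density does not vanish where $\rho$ does not, which is where~\eqref{eq:cond_gzak} is used to keep the entropy finite. For ISA the cone $X_+^r$ is infinite-dimensional and $\rho_a^{0,(n)}$ has no a priori $L^\infty$ or decay bound, so compactness genuinely fails; the trick is to \emph{not} extract a limit of $\rho_a^{0,(n)}$ directly but to use the variational structure: by Lemma~\ref{lem:Step2_ISA} we may assume $\rho_a^{0,(n)}=\langle\rho_a^{(n)}\rangle_{\rm s}$, and by Lemma~\ref{lem:Step1} we may assume $\rho_a^{(n)}=[\bG_{\rho,\bR}(\langle\bm\rho^{(n-1)}\rangle_{\rm s})]_a$ --- i.e.\ reduce to the ISA fixed-point map --- so that $\rho_a^{0,(n)}=\langle\rho_a^{(n)}\rangle_{\rm s}$ inherits the weak limit $\langle\rho_a^{\rm opt}\rangle_{\rm s}$ of $\langle\rho_a^{(n)}\rangle_{\rm s}$ (spherical averaging is weakly continuous), set $\rho_a^{0,\rm opt}:=\langle\rho_a^{\rm opt}\rangle_{\rm s}$, and conclude by joint weak lower semicontinuity of $(\bm\rho,\bm\rho^0)\mapsto S(\bm\rho|\bm\rho^0)$ together with~\eqref{eq:ISA_opt} rewriting the problem as $\min_{\bm\rho\in\bcK_{\rho,\bR}}S(\bm\rho|\langle\bm\rho\rangle_{\rm s})$. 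The one remaining subtlety is to rule out degeneracy of $\langle\rho_a^{\rm opt}\rangle_{\rm s}$ on sets where $\rho_a^{\rm opt}>0$ --- if it occurred the entropy would be $+\infty$, contradicting minimality --- which forces the support condition needed for the Hirschfeld relation~\eqref{eq:EL1} to hold with the stated convention. I expect verifying this lower semicontinuity of the relative entropy under only weak-$L^1$ convergence of both arguments (without a dominating function on the denominators) to be the technically heaviest step, handled via the variational (Donsker--Varadhan type) representation $s_{\rm KL}(f|g)=\sup_{\varphi}\big(\int f\varphi - \int g(e^{\varphi}-1)\big)$ over bounded continuous $\varphi$, each term of which is weakly continuous.
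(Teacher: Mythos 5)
Your proposal follows the same architecture as the paper's proof: a minimizing sequence, the weak-$*$ compactness of $\bcK_{\rho,\bR}$ together with conservation of the atomic masses and first moments (the paper isolates this as Lemma~\ref{lem:topology_KrhoR}), the reduction to $\bm\rho^{0,n}=\langle\bm\rho^n\rangle_s$ for ISA (resp.\ to coefficient vectors in a compact simplex for L-ISA), lower semicontinuity of the relative entropy along the weak limits, strict convexity for uniqueness, and Lemmata~\ref{lem:Step1} and~\ref{lem:Step2_ISA} for the optimality relations \eqref{eq:EL1} and \eqref{eq:EL11}. The one genuinely different ingredient is your treatment of the lower semicontinuity, which you correctly identify as the heaviest step: the paper normalizes $\rho_a^n$ and $\rho_a^{0,n}$ to probability measures, invokes tightness and the standard joint lower semicontinuity of the Kullback--Leibler divergence under weak convergence of probability measures, and handles atoms with $N_a^*=0$ by a separate dominated-convergence argument; you instead propose the duality representation $s_{\rm KL}(f|g)=\sup_\varphi\bigl(\int f\varphi-\int g(e^\varphi-1)\bigr)$, valid here because $\int f=\int g$ along the sequence and in the limit, each bracket being weak-$*$ continuous against compactly supported test functions $\varphi$. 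This is a legitimate and arguably cleaner route that dispenses with the case split on $N_a^*$. For L-ISA the two arguments coincide up to packaging ($\Gamma$-convergence of the normalized functionals in the paper versus convergence of coefficients in a simplex in your write-up), and your use of~\eqref{eq:cond_gzak} to dominate $\rho_a^n\,|\log\rho_a^{0,n}|$ is exactly the paper's.

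Three loose ends deserve a word, though none is fatal and the paper itself is terse on the first and third. (i) The functional $(f,g)\mapsto\int f\log(f/g)$ is positively homogeneous of degree one, hence \emph{not} strictly convex in the pair, contrary to your later sentence; your earlier, correct remark (strict convexity except along proportional directions) must be combined with the constraints $\sum_a\rho_a(\cdot-\bR_a)=\rho$ and $\bcN(\bm\rho)=\bcN(\bm\rho^0)$ to exclude two distinct minimizers with pointwise-proportional entries. (ii) The further replacement $\bm\rho^{(n)}=\bG_{\rho,\bR}(\langle\bm\rho^{(n-1)}\rangle_s)$ may leave $\bcC_{\rho,\bR}$, since $\bG_{\rho,\bR}$ does not preserve the atomic charges; fortunately this replacement is not used in the remainder of your argument and should simply be dropped. (iii) Deducing~\eqref{eq:EL1} from Lemma~\ref{lem:Step1} requires comparing with the charge-\emph{unconstrained} minimizer $\bG_{\rho,\bR}(\bm\rho^{0,\rm opt})$, which is not a priori admissible for \eqref{eq:ISAopt}; the rescaling $\bm\rho^{0,\rm opt}\mapsto\bm\lambda\odot\bm\rho^{0,\rm opt}$ combined with the inequality $\sum_a N_a\log(N_a/N_a')\ge 0$ (the computation the paper carries out in the proof of Theorem~\ref{th:convergence}) is what closes this step, rather than "the constraint already imposed at the minimizer".
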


\medskip

At first sight, it is not clear at all that the fixed-point iterations \eqref{eq:step1}-\eqref{eq:step2} for solving problem~\eqref{eq:ISAopt} should converge. Indeed, the constraint $\bcN(\bm \rho) = \bcN(\bm \rho^0)$ is not taken into account in the minimization subproblem \eqref{eq:step2}, and it is therefore not clear {\it a priori} that the algorithm \eqref{eq:step1}-\eqref{eq:step2} possesses a Lyapunov functional (i.e. a functional which decreases at each iteration, a very useful tool in convergence proofs). It turns out that the function $S$ itself is in fact a Lyapunov functional: $S\left( \bm \rho^{(m+1)} | \bm \rho^{0,(m+1)} \right) \le S\left( \bm \rho^{(m)} | \bm \rho^{0,(m)} \right)$ for all $m$. This is a consequence of the special properties of the Kullback-Leibler divergence, which gives rise to the following result.

\medskip

\begin{theorem}\label{th:convergence}
Let $\rho \in X_+$ be such that $\rho > 0$ almost everywhere, and $\bm\rho^{0,(0)}=(\rho^{0,(0)}_a)_{1 \le a \le M} \in \bcK^0$ (with $\bcK^0=\bcK^0_{\rm ISA}$ or $\bcK^0=\bcK^0_{\rm L-ISA}$) be an initial guess such that 
$\rho_a^{0,(0)} > 0$ almost everywhere for all $1 \le a \le M$, and $s_{\rm KL}(\rho|\rho^{0,(0)}) < +\infty$, where $\rho^{0,(m)}=\sum_{a=1}^M \rho^{0,(m)}_a(\,\cdot\,-\bR_a)$. In the L-ISA case, we assume in addition that \eqref{eq:cond_gzak} is satisfied and all the $g_{z_a,k}$'s are positive almost everywhere.
Then 
\begin{enumerate}
\item For both ISA and L-ISA, the AIM iterations \eqref{eq:step1}-\eqref{eq:step2} are well-defined and $S$ is a Lyapunov functional of the algorithm. More precisely, we have for all $m \ge 1$, $(\bm \rho^{(m)}, \bm \rho^{0,(m)}) \in \bcC_{\rho,\bR}$ and the estimates
\begin{equation}\label{eq:decrease}
S(\bm \rho^{(m-1)}| \bm \rho^{0,(m-1)}) - S(\bm \rho^{(m)}| \bm \rho^{0,(m)}) \geq \frac{1}{2\|\rho\|_{L^\infty}} \sum_{a=1}^M \left\| \rho_a^{(m)} -  \rho_a^{(m-1)}\right\|_{L^2}^2 \geq 0,
\end{equation}
and
\begin{equation}\label{eq:decrease_KL}
s_{\rm KL}(\rho| \rho^{0,(m-1)}) - s_{\rm KL}(\rho|  \rho^{0,(m)}) \geq \frac{1}{2\|\rho\|_{L^\infty}} \sum_{a=1}^M \left\| \rho_a^{(m+1)} -  \rho_a^{(m)}\right\|_{L^2}^2 \geq 0.
\end{equation}
As a consequence, it holds that
\begin{equation}\label{eq:diffiter}
\bm \rho^{(m)} - \bm \rho^{(m-1)} \mathop{\longrightarrow}_{m\to +\infty} 0 \quad \mbox{strongly in } L^2(\mathbb{R}^3)^M. 
\end{equation}
\item For L-ISA, the sequence $(\bm\rho^{0,(m)},\bm\rho^{(m)})_{m \in \N}$ converges to the unique minimizer $(\bm\rho^{\rm opt},\bm\rho^{0,\rm opt})$ of (\ref{eq:ISAopt}) in some sense. More precisely, $(\bm\rho^{0,(m)})_{m \in \N}$ converges strongly to $\bm\rho^{0,\rm opt}$ in $L^p(\mathbb{R}^3)$ for any $1\leq p \leq +\infty$, and $(\bm\rho^{(m)})_{m \in \N}$ converges to $\bm\rho^{\rm opt}$ for the weak-* topologies of $\cM_{\rm b}(\R^3)^M$ and $L^\infty(\R^3)^M$, for the weak topology of $L^p(\R^3)^M$ for any $1 < p < \infty$, and for the strong topology of $L^\infty_{\rm loc}(\R^3)$.
\end{enumerate}
\end{theorem}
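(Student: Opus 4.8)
For item~1 the plan is to combine the alternating-minimization structure with a Pythagorean identity for the relative entropy. I would first check by induction on $m$ that all iterates are well-defined, with finite entropy and positive a.e.: if $\rho_a^{0,(m-1)}>0$ a.e. and $s_{\rm KL}(\rho|\rho^{0,(m-1)})<+\infty$, then by Lemma~\ref{lem:Step1} Step~1 produces $\bm\rho^{(m)}=\bG_{\rho,\bR}(\bm\rho^{0,(m-1)})\in\bcK_{\rho,\bR}$ with $S(\bm\rho^{(m)}|\bm\rho^{0,(m-1)})=s_{\rm KL}(\rho|\rho^{0,(m-1)})$, $\rho_a^{(m)}>0$ a.e. and $0<N_a^{(m)}:=\int_{\R^3}\rho_a^{(m)}\le N$, while Step~2 is solvable (by Lemma~\ref{lem:Step2_ISA} in the ISA case, and in the L-ISA case because, under \eqref{eq:cond_gzak}, $F_a$ is finite, continuous and strictly convex on the compact simplex $\{\bc_a\ge 0,\ \sum_k c_{a,k}=N_a^{(m)}\}$). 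The core identity, valid for every $\bm\sigma\in\bcK_{\rho,\bR}$ because $\bm\rho^{(m)}=\bG_{\rho,\bR}(\bm\rho^{0,(m-1)})$, is
\[
S(\bm\sigma\,|\,\bm\rho^{0,(m-1)})=S(\bm\sigma\,|\,\bm\rho^{(m)})+S(\bm\rho^{(m)}\,|\,\bm\rho^{0,(m-1)}),
\]
obtained by writing $\log(\sigma_a/\rho_a^{0,(m-1)})=\log(\sigma_a/\rho_a^{(m)})+\log\bigl(\rho(\cdot+\bR_a)/\rho^{0,(m-1)}(\cdot+\bR_a)\bigr)$ a.e., multiplying by $\sigma_a$, integrating and summing over $a$. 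Taking $\bm\sigma=\bm\rho^{(m-1)}$ gives $S(\bm\rho^{(m-1)}|\bm\rho^{0,(m-1)})-S(\bm\rho^{(m)}|\bm\rho^{0,(m-1)})=S(\bm\rho^{(m-1)}|\bm\rho^{(m)})$, which I would bound below by $\tfrac{1}{2\|\rho\|_{L^\infty}}\sum_a\|\rho_a^{(m)}-\rho_a^{(m-1)}\|_{L^2}^2$ using the elementary inequality $t\log(t/s)-t+s\ge\tfrac{(t-s)^2}{2\max(t,s)}$, the pointwise bounds $\rho_a^{(m)},\rho_a^{(m-1)}\le\|\rho\|_{L^\infty}$ valid on $\bcK_{\rho,\bR}$, and the fact that $\sum_a\int_{\R^3}(\rho_a^{(m-1)}-\rho_a^{(m)})=0$, so the linear remainders cancel.

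The delicate point is then to show that Step~2 does not increase the entropy, $S(\bm\rho^{(m)}|\bm\rho^{0,(m)})\le S(\bm\rho^{(m)}|\bm\rho^{0,(m-1)})$, even though $\bm\rho^{0,(m-1)}$ need not satisfy the constraint $\bcN(\cdot)=\bcN(\bm\rho^{(m)})$ imposed in Step~2. I would get around this by rescaling: $\widehat\rho_a^{0,(m-1)}:=(N_a^{(m)}/N_a^{(m-1)})\,\rho_a^{0,(m-1)}$ still lies in the cone $\cK^0_{z_a}$ and now meets the constraint, so $S(\bm\rho^{(m)}|\bm\rho^{0,(m)})\le S(\bm\rho^{(m)}|\widehat{\bm\rho}^{0,(m-1)})=S(\bm\rho^{(m)}|\bm\rho^{0,(m-1)})-\sum_a N_a^{(m)}\log\bigl(N_a^{(m)}/N_a^{(m-1)}\bigr)$; since $\sum_a N_a^{(m)}=\sum_a N_a^{(m-1)}=N$, the last sum equals $N$ times the relative entropy of the probability vectors $(N_a^{(m)}/N)_a$ and $(N_a^{(m-1)}/N)_a$, hence is $\ge 0$. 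Combining with the previous paragraph yields \eqref{eq:decrease}; estimate \eqref{eq:decrease_KL} is obtained identically, now using $s_{\rm KL}(\rho|\rho^{0,(m)})=S(\bm\rho^{(m+1)}|\bm\rho^{0,(m)})$ (Lemma~\ref{lem:Step1}) and splitting $S(\bm\rho^{(m)}|\bm\rho^{0,(m-1)})-S(\bm\rho^{(m+1)}|\bm\rho^{0,(m)})$ through $S(\bm\rho^{(m)}|\bm\rho^{0,(m)})$ with the Pythagorean identity for Step~1 of iteration $m+1$. Since $S(\bm\rho^{(m)}|\bm\rho^{0,(m)})\ge 0$ (as $\int_{\R^3}\rho_a^{(m)}=\int_{\R^3}\rho_a^{0,(m)}$ and $t\log(t/s)\ge t-s$), the sequence is non-increasing and bounded below, hence convergent, so the right-hand sides of \eqref{eq:decrease} are summable, which gives \eqref{eq:diffiter}.

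For item~2 the new ingredient, available only for L-ISA, is finite-dimensional compactness: the coefficient vectors $\bc^{(m)}=(c_{a,k}^{(m)})$ lie in the compact set $\{\bc\ge 0:\sum_{a,k}c_{a,k}=N\}$, so it suffices to show that every accumulation point determines the unique minimizer of \eqref{eq:ISAopt} (uniqueness from Theorem~\ref{thm:WPcontPb}); this then forces the whole sequence to converge and, the $g_{z_a,k}$ being linearly independent, $\bm\rho^{0,(m)}\to\bm\rho^{0,\rm opt}$ pointwise, hence in every $L^p$. Along a subsequence with $\bc^{(m-1)}\to\bc^{**}$ and $\bc^{(m)}\to\bc^{*}$, the proatom densities converge pointwise (finite combinations with converging coefficients); the associated promolecules are positive a.e.\ (the total mass $N>0$ prevents all blocks from vanishing), so $\bG_{\rho,\bR}$ is continuous there, and with the domination $\rho_a^{(m)}\le\rho(\cdot+\bR_a)\in L^1$ and dominated convergence both $\bm\rho^{(m)}$ and $\bm\rho^{(m+1)}$ converge in $L^1$ to limits which \eqref{eq:diffiter} forces to coincide; call this limit $\bm\rho^{*}=\bG_{\rho,\bR}(\bm\rho^{0,*})$. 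Passing to the limit in Step~2 — using that $w_a^{(m)}\to w_a^{*}$ in $L^1(\R_+;r^2\,dr)$, that $F_a^{(m)}\to F_a^{*}$ uniformly on the simplices $\{\sum_k c_{a,k}=N_a^{(m)}\}$ (a consequence of \eqref{eq:cond_gzak} and of $\rho_a^{(m)}\,|\log(\min_k g_{z_a,k})|\to\rho_a^{*}\,|\log(\min_k g_{z_a,k})|$ in $L^1$ by dominated convergence), and stability of the minimizer of a strictly convex function under uniform perturbation and under perturbation of the constraint level — shows that $\bm\rho^{0,*}$ is the Step~2 minimizer associated with $\bm\rho^{*}$. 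Hence $(\bm\rho^{*},\bm\rho^{0,*})$ is a fixed point of the two-step iteration.

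It then remains to show that every such fixed point solves \eqref{eq:ISAopt}. Since \eqref{eq:ISAopt} is a convex problem ($S$ jointly convex, $\bcK_{\rho,\bR}\times\bcK^0$ convex, constraint affine), this reduces to verifying the variational inequality $\langle\nabla_{\bm\rho}S,\bm\rho-\bm\rho^{*}\rangle+\langle\nabla_{\bm\rho^0}S,\bm\rho^0-\bm\rho^{0,*}\rangle\ge 0$ at $(\bm\rho^{*},\bm\rho^{0,*})$ for every $(\bm\rho,\bm\rho^0)\in\bcC_{\rho,\bR}$. The $\bm\rho$-term equals $\sum_a\int_{\R^3}\bigl(\log(\rho_a^{*}/\rho_a^{0,*})+1\bigr)(\rho_a-\rho_a^{*})$: the contribution of the constant $1$ sums to $\sum_a\int_{\R^3}(\rho_a-\rho_a^{*})=0$, and because the Hirschfeld relation makes $\log(\rho_a^{*}/\rho_a^{0,*})(\cdot-\bR_a)$ equal to the single function $\log(\rho/\rho^{0,*})$ for every $a$ while $\bm\rho,\bm\rho^{*}\in\bcK_{\rho,\bR}$, the remaining part is $\int_{\R^3}\log(\rho/\rho^{0,*})\,(\rho-\rho)=0$. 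For the $\bm\rho^0$-term, the KKT conditions of the Step~2 subproblem at $\bm\rho^{0,*}$ give $\int_{\R^3}(\rho_a^{*}/\rho_a^{0,*})g_{z_a,k}\le\mu_a$ with equality whenever $c_{a,k}^{*}>0$; multiplying by $c_{a,k}^{*}$, summing over $k$ and using $\int_{\R^3}(\rho_a^{*}/\rho_a^{0,*})\rho_a^{0,*}=\int_{\R^3}\rho_a^{*}=N_a^{*}$ forces $\mu_a=1$, hence $\int_{\R^3}(\rho_a^{*}/\rho_a^{0,*})g_{z_a,k}\le 1$ for all $k$, so (by the $L^1$-normalization of the $g_{z_a,k}$) $\int_{\R^3}(\rho_a^{*}/\rho_a^{0,*})\rho_a^0\le\int_{\R^3}\rho_a^0$ for every $\bm\rho^0\in\bcK^0$; summing over $a$ and using $\sum_a\int_{\R^3}\rho_a^0=\sum_a\int_{\R^3}\rho_a=N=\sum_a\int_{\R^3}\rho_a^{*}$ makes the $\bm\rho^0$-term $\ge 0$. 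Therefore $(\bm\rho^{*},\bm\rho^{0,*})$ is the unique minimizer, $\bc^{(m)}\to\bc^{*}$, $\bm\rho^{0,(m)}\to\bm\rho^{0,\rm opt}$ strongly in every $L^p$, and inserting this into the Hirschfeld formula (pointwise a.e.\ convergence together with the domination $\rho_a^{(m)}\le\rho(\cdot+\bR_a)$) yields the asserted convergence of $\bm\rho^{(m)}$ to $\bm\rho^{\rm opt}$, in particular weak-$*$ convergence in $\cM_{\rm b}(\R^3)^M$ and $L^\infty(\R^3)^M$, weak convergence in $L^p(\R^3)^M$ for $1<p<\infty$, and strong convergence in $L^\infty_{\rm loc}$. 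I expect the main obstacles to be precisely the rescaling trick that turns $S$ into a genuine Lyapunov functional although the charge constraint is dropped in Step~2, and, for item~2, establishing the continuity of the Step~2 argmin together with the fact that fixed points are global minimizers — the latter resting on the KKT computation above and on the $L^1$-normalization of the $g_{z_a,k}$.
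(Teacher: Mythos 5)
Your proposal is correct and, for part~1, it is essentially the paper's argument: the same induction for well-posedness, the same rescaling trick (you rescale the old proatoms to meet the new charge constraint, whereas the paper rescales the new ones to meet the old constraint --- the resulting inequality $S(\bm\rho^{(m)}|\bm\rho^{0,(m)})\le S(\bm\rho^{(m)}|\bm\rho^{0,(m-1)})$ and the Gibbs-type bound $\sum_a N_a^{(m)}\log\bigl(N_a^{(m)}/N_a^{(m-1)}\bigr)\ge 0$ are identical), and the same second-order Taylor lower bound; your entropy ``Pythagorean identity'' $S(\bm\sigma|\bm\rho^{0,(m-1)})=S(\bm\sigma|\bm\rho^{(m)})+s_{\rm KL}(\rho|\rho^{0,(m-1)})$ is just a cleaner packaging of the paper's expansion of $x'\log(x'/y)$ around $x$ followed by the cancellation of the linear term via the Hirschfeld relation. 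For part~2 you follow the same compactness-plus-fixed-point strategy (finite-dimensional compactness of the coefficient simplex, passage to the limit in both steps, uniqueness of the minimizer forcing full-sequence convergence), but you supply something the paper only asserts: an explicit proof that a fixed point of the alternating scheme --- Hirschfeld relation plus Step-2 optimality --- is the global minimizer of the jointly constrained convex problem, via the first-order variational inequality and the KKT conditions of the Step-2 subproblem (the computation $\mu_a=1$, hence $\int_{\R^3}(\rho_a^{*}/\rho_a^{0,*})\,g_{z_a,k}\le 1$, combined with the $L^1$-normalization of the $g_{z_a,k}$ and $\sum_a\int_{\R^3}\rho_a^0=\sum_a\int_{\R^3}\rho_a^{*}=N$). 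This is a genuine and useful addition, since the coupling constraint $\bcN(\bm\rho)=\bcN(\bm\rho^0)$ prevents blockwise optimality from trivially implying joint optimality; the only loose end is the degenerate case $N_a^{*}=0$, where your derivation of $\mu_a=1$ divides by $N_a^{*}$ --- a case the paper's own terse proof also leaves implicit.
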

Here, $\cM_{\rm b}(\R^3)$ denotes the Banach space of the bounded (signed) Radon measures on $\R^3$ endowed with the total variation norm.

\medskip

We finally establish additional mathematical properties of the ISA AIM decomposition $\bm \rho^{\rm opt} = (\rho^{\rm opt}_1, \rho^{\rm opt}_2)$ of diatomic systems ($\bcK^0=\bcK^0_{\rm ISA}$ and $M=2$). We first show that one of the atomic densities $\rho^{\rm opt}_a$ is identically equal to zero if and only the total density $\rho$ is a radial function centered on the other atom.

\medskip

 \begin{proposition}\label{prop:spheric}
 Let us assume that $M = 2$ and $\rho \in X_+$ is continuous and positive on $\R^3$. Let $\left(\bm \rho^{\rm opt}, \bm \rho^{0,\rm opt}\right)$  be the unique minimizer of problem (\ref{eq:ISA_opt}) for $\bcK^0 = \bcK^0_{\rm ISA}$. Then, $\rho^{\rm opt}_1 =0$ if and only if $\rho(\cdot + \bR_2) = \rho^{\rm opt}_2 = \langle \rho^{\rm opt}_2\rangle_s$. 
 \end{proposition}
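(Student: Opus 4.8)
\textbf{Proof plan for Proposition~\ref{prop:spheric}.}

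The plan is to prove both implications using the explicit Hirschfeld relation \eqref{eq:EL1} together with the ISA self-consistency \eqref{eq:EL11}, i.e. $\rho^{0,\rm opt}_a = \langle \rho^{\rm opt}_a\rangle_s$. The easy direction is the ``if'' part: suppose $\rho(\cdot+\bR_2)$ is radial and centered at $\bR_2$; I would check directly that the pair $(\bm\rho,\bm\rho^0)$ with $\rho_1 = 0$, $\rho_2 = \rho(\cdot+\bR_2)$, $\rho_1^0 = 0$ and $\rho_2^0 = \rho(\cdot+\bR_2)$ (which is in $X_+^r$ by hypothesis) lies in $\bcC_{\rho,\bR}$, satisfies the convention-augmented Hirschfeld relation \eqref{eq:EL1} and the spherical-average relation \eqref{eq:EL11}, and gives $S(\bm\rho|\bm\rho^0) = 0 = s_{\rm KL}(\rho^0|\rho^0)$, hence is the (unique) minimizer by Theorem~\ref{thm:WPcontPb}. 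Actually since $S \ge 0$ always (Jensen/Gibbs), any admissible pair achieving $S = 0$ is a minimizer, so this is immediate; then uniqueness forces $\rho^{\rm opt}_1 = 0$ and $\rho^{\rm opt}_2 = \rho(\cdot+\bR_2) = \langle\rho^{\rm opt}_2\rangle_s$.

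The substantive direction is ``only if''. Assume $\rho^{\rm opt}_1 \equiv 0$. Since $\rho^{0,\rm opt}_1 = \langle\rho^{\rm opt}_1\rangle_s = 0$ by \eqref{eq:EL11}, and since $\rho_1^{\rm opt} + \rho_2^{\rm opt}(\cdot - \bR_2 + \bR_1)$ reconstructs $\rho(\cdot + \bR_1)$ (membership in $\bcK_{\rho,\bR}$), we get $\rho_2^{\rm opt}(\cdot + \bR_2) = \rho$, hence also $\rho_2^{0,\rm opt}(\cdot+\bR_2) = \langle\rho_2^{\rm opt}\rangle_s(\cdot+\bR_2)$. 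The point to extract is that $\langle\rho_2^{\rm opt}\rangle_s(\cdot + \bR_2) = \rho$, i.e. $\rho$ equals its own spherical average about $\bR_2$, which is exactly the statement that $\rho$ is radial about $\bR_2$. To see this I would write the Hirschfeld relation \eqref{eq:EL1} for $a = 2$: with $\rho_1^{0,\rm opt} = 0$ and $\rho^{0,\rm opt}(\br) = \rho_2^{0,\rm opt}(\br - \bR_2)$, the denominator is $\rho_2^{0,\rm opt}(\br-\bR_2)$ and the relation reads $\rho_2^{\rm opt}(\br) = \rho(\br+\bR_2)$ on the set where $\rho_2^{0,\rm opt}(\br) > 0$, with $\rho_2^{\rm opt}(\br) = 0$ elsewhere. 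Combined with $\rho_2^{0,\rm opt} = \langle\rho_2^{\rm opt}\rangle_s$, and using that $\rho > 0$ everywhere (so the ``elsewhere'' set would force $\langle\rho_2^{\rm opt}\rangle_s = 0$ on a sphere, hence $\rho_2^{\rm opt} = 0$ on that whole sphere, hence $\rho(\cdot+\bR_2) = 0$ there, a contradiction unless that set is empty), we conclude $\rho_2^{0,\rm opt}(\br) > 0$ for all $\br$, so $\rho_2^{\rm opt}(\br) = \rho(\br+\bR_2)$ everywhere. Then $\rho_2^{0,\rm opt} = \langle\rho_2^{\rm opt}\rangle_s$ gives $\rho_2^{0,\rm opt}(\br) = \langle\rho(\cdot+\bR_2)\rangle_s(|\br|)$, but feeding this back requires $\rho_2^{0,\rm opt} = \rho_2^{\rm opt} = \rho(\cdot+\bR_2)$ pointwise (since the denominator is $\rho_2^{0,\rm opt}$ and numerator $\rho_2^{0,\rm opt}$, the ratio is $1$), forcing $\rho(\cdot+\bR_2) = \langle\rho(\cdot+\bR_2)\rangle_s$, i.e. $\rho$ radial about $\bR_2$, and then $\rho_2^{\rm opt} = \rho_2^{0,\rm opt} = \langle\rho_2^{\rm opt}\rangle_s = \rho(\cdot+\bR_2)$ as claimed.

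The main obstacle I anticipate is the careful handling of the zero sets and the conventions in \eqref{eq:EL1}--\eqref{eq:EL11}: one must rule out the possibility that $\rho_2^{0,\rm opt}$ (equivalently $\langle\rho_2^{\rm opt}\rangle_s$) vanishes on some sphere $|\br| = r_0$ without $\rho_2^{\rm opt}$ itself vanishing there, and one must argue that $\rho_2^{\rm opt}(\br) = \rho(\br+\bR_2)$ propagates from $\{\rho_2^{0,\rm opt} > 0\}$ to all of $\R^3$. The positivity and continuity of $\rho$ is exactly what is needed here: $\langle\rho_2^{\rm opt}\rangle_s(r_0) = 0$ would force $\rho_2^{\rm opt} = 0$ a.e. on the sphere of radius $r_0$ (nonnegativity), hence $\rho(\br+\bR_2) = 0$ there via the reconstruction $\rho_2^{\rm opt} + \rho_1^{\rm opt}(\cdot-\bR_2+\bR_1) = \rho(\cdot+\bR_2)$ with $\rho_1^{\rm opt}=0$, contradicting $\rho > 0$. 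A secondary subtlety is making sure all the ``a.e.'' statements can be upgraded where continuity is invoked; since $\rho$ is continuous and positive, and $\rho_2^{\rm opt} = \rho(\cdot+\bR_2)$ a.e., this identification is clean, and the radial symmetry of $\rho$ about $\bR_2$ is then an honest pointwise statement.
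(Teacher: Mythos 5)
Your ``if'' direction is fine: the candidate pair $(0,\rho(\cdot+\bR_2))$ achieves $S=0$, the objective is nonnegative, and uniqueness does the rest; this matches what the paper treats as immediate.

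The ``only if'' direction has a genuine gap. After correctly deducing $\rho_1^{0,\rm opt}=0$ and $\rho_2^{\rm opt}=\rho(\cdot+\bR_2)$, you write the Hirschfeld relation for $a=2$ with denominator $\rho_2^{0,\rm opt}$ and conclude that ``feeding this back requires $\rho_2^{0,\rm opt}=\rho_2^{\rm opt}$ pointwise since the ratio is $1$.'' That is a non sequitur: the ratio being $1$ only reproduces $\rho_2^{\rm opt}=\rho(\cdot+\bR_2)$, which you already had from the decomposition constraint; it imposes no relation between $\rho_2^{\rm opt}$ and its spherical average. More fundamentally, the configuration $\rho_1=0$, $\rho_2=\rho(\cdot+\bR_2)$, $\rho_1^0=0$, $\rho_2^0=\langle\rho(\cdot+\bR_2)\rangle_s$ satisfies \emph{both} \eqref{eq:EL1} and \eqref{eq:EL11} for \emph{every} positive continuous $\rho$, radial or not --- it is always a fixed point of the ISA iteration. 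So no argument that uses only these stationarity conditions can ever conclude that $\rho$ is radial about $\bR_2$; if your chain of implications were valid it would prove that every positive $\rho$ is radial, which is absurd. The missing ingredient is that $\rho_1^{\rm opt}=0$ sits on the boundary of the admissible cone, and one must test first-order optimality against one-sided perturbations that turn atom~$1$ on: the paper perturbs the proatom profiles by $\epsilon h_1\ge 0$, computes $\mathcal J(\bm\rho^\epsilon)-\mathcal J(\bm\rho^{\rm opt})=-\epsilon\int\big\langle \tfrac{\rho(\cdot+\bR_2)}{\langle\rho(\cdot+\bR_2)\rangle_s}\log\tfrac{\rho(\cdot+\bR_2)}{\langle\rho(\cdot+\bR_2)\rangle_s}\big\rangle_s h_1+o(\epsilon)$ (after a change of center), notes that the bracket is nonnegative by Jensen's inequality, and uses minimality to force it to vanish a.e.; strict convexity of $x\mapsto x\log x$ then yields $\rho(\cdot+\bR_2)=\langle\rho(\cdot+\bR_2)\rangle_s$. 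Your proposal contains no substitute for this variational step.
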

 
 \medskip
 
 We also have the following result, which proves stronger regularity results on $\bm \rho$ under appropriate assumptions 
 on the regularity of $\rho$. 
 \begin{proposition} \label{prop:continuity}
 Assume that $M = 2$ and $\rho \in X_+ \cap C^{0,1}(\R^3)$. Let $(\bm \rho^{\rm opt},\bm \rho^{0,\rm opt})$ be
  the unique minimizer of problem \eqref{eq:ISAopt} for $\bcK^0 = \bcK^0_{\rm ISA}$.
For $a=1,2$, let $w_a\in Y_+$ be the function defined by
$$
\forall r>0, \quad \forall \bm\sigma\in \S^2, \quad w_a(r):= \langle \rho_a^{\rm opt}\rangle_s(r\bm\sigma).
$$
Then, 
\begin{enumerate}
    \item for almost all $r \ge 0$, $w_a(r) \in \{0,w_a^+(r)\}$, where the functions $w_a^+:\R_+ \to \R_+$ are bounded by $M_\rho:=\max_{\R^3}\rho$, continuous on $(0,+\infty)$, and Lipschitz in the neighborhood of any point of $(0,+\infty)$ at which they are positive;
    \item if in addition, the density $\rho$ is $C^k$ away from the centers $\bR_a$ for $a=1,2$ and the functions $w_a$ are essentially bounded away from zero  on any compact subset of $[0,+\infty)$, then the functions $w_a$ are Lipschitz on $[0,+\infty)$ and $C^k$ on $(0,R) \cup (R,+\infty)$ with $R:=|\bR_a - \bR_b|$.
\end{enumerate}
\end{proposition}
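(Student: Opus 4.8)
The key input is the Euler--Lagrange characterisation of Theorem~\ref{thm:WPcontPb}. For $M=2$, write $\{a,b\}=\{1,2\}$, $\bm d_a:=\bR_b-\bR_a$, $R:=|\bm d_a|$; combining \eqref{eq:EL1} with \eqref{eq:EL11}, i.e.\ $\rho^{0,\rm opt}_a=\langle\rho^{\rm opt}_a\rangle_s$ (so that $\rho^{0,\rm opt}_a(\bm r)=w_a(|\bm r|)$), one has for a.e.\ $r>0$ and all $\bm\sigma\in\S^2$
\begin{equation}\label{eq:propplan-EL}
\rho^{\rm opt}_a(r\bm\sigma)=\frac{w_a(r)}{w_a(r)+w_b(|r\bm\sigma-\bm d_a|)}\,\rho(r\bm\sigma+\bR_a),
\end{equation}
with the convention that the right-hand side is $0$ where $w_a(r)=0$. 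Averaging \eqref{eq:propplan-EL} over $\bm\sigma\in\S^2$ and pulling out the constant $w_a(r)$ gives, for a.e.\ $r>0$,
\begin{equation}\label{eq:propplan-dicho}
w_a(r)\,\bigl(1-G_a(r,w_a(r))\bigr)=0,\qquad G_a(r,t):=\fint_{\S^2}\frac{\rho(r\bm\sigma+\bR_a)}{t+w_b(|r\bm\sigma-\bm d_a|)}\,d\bm\sigma .
\end{equation}
For fixed $r>0$, $t\mapsto G_a(r,t)$ is continuous and nonincreasing on $(0,+\infty)$, tends to $0$ as $t\to+\infty$, and is strictly decreasing whenever $\rho$ does not vanish identically on the sphere $\{|\bm x-\bR_a|=r\}$, which is forced as soon as $w_a(r)>0$. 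Hence $G_a(r,\cdot)=1$ has at most one solution $t>0$; I define $w^+_a(r)$ to be that solution if it exists and $w^+_a(r):=0$ otherwise. Equation \eqref{eq:propplan-dicho} then reads exactly $w_a(r)\in\{0,w^+_a(r)\}$ for a.e.\ $r\ge 0$, which is the dichotomy of item~1, while the bound $w^+_a\le M_\rho$ is immediate from $G_a(r,w^+_a(r))=1$, $\rho\le M_\rho$ and $w_b\ge0$.

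To get the regularity of $w^+_a$ I rewrite $G_a$ as a one-dimensional integral: parametrising $\bm\sigma$ by its polar angle about $\bm d_a$ and changing variables to $s:=|r\bm\sigma-\bm d_a|\in[\,|r-R|,r+R\,]$ (whose image of the normalised surface measure has density $\tfrac{s}{2rR}$), one obtains
\begin{equation}\label{eq:propplan-1d}
G_a(r,t)=\frac{1}{2rR}\int_{|r-R|}^{r+R}\frac{s\,\overline\rho^{\,a}_r(s)}{t+w_b(s)}\,ds,
\end{equation}
where $\overline\rho^{\,a}_r(s)$ is the azimuthal average of $\rho(\,\cdot+\bR_a)$ along the corresponding circle. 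Since $\rho\in C^{0,1}$, the map $(r,s)\mapsto\overline\rho^{\,a}_r(s)$ is Lipschitz and bounded by $M_\rho$; hence $G_a$ is locally Lipschitz in $r$ (uniformly for $t$ in compacts of $(0,+\infty)$) and $C^1$ in $t$ with $\partial_tG_a<0$. By the implicit function theorem $w^+_a$ is locally Lipschitz on the open set $\{w^+_a>0\}=\{\,r:\lim_{t\downarrow0}G_a(r,t)>1\,\}$ (openness by lower semicontinuity), and a short subsequence argument based on the strict monotonicity of $G_a(r,\cdot)$ and the joint continuity of $G_a$ extends continuity of $w^+_a$ to all of $(0,+\infty)$; together with $w^+_a\equiv0$ off that open set this completes item~1.

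For item~2 the extra hypotheses first eliminate the dichotomy: by item~1, $w_a=w^+_a$ a.e.; since $w_a$ is essentially bounded below on every compact of $[0,+\infty)$ while $w^+_a$ is continuous on $(0,+\infty)$, we get $w_a=w^+_a>0$ on $(0,+\infty)$ and the identity $G_a(r,w_a(r))=1$ for \emph{every} $r>0$. I then bootstrap regularity on \eqref{eq:propplan-1d} via the implicit function theorem. Fix $r_0\in(0,R)\cup(R,+\infty)$; the sphere $\{|\bm x-\bR_a|=r_0\}$ avoids both centres, so by the $C^k$ regularity of $\rho$ away from $\bR_1,\bR_2$ the map $(r,s)\mapsto\overline\rho^{\,a}_r(s)$ is $C^k$ near $(r_0,[\,|r_0-R|,r_0+R\,])$. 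Differentiating \eqref{eq:propplan-1d} repeatedly in $r$ — the $r$-dependence, besides $\overline\rho^{\,a}_r$, entering only through the limits $|r-R|$, $r+R$, hence through the endpoint values $w_b(|r-R|)$, $w_b(r+R)$ — one shows by induction on $j=0,1,\dots,k$ that $w_1,w_2\in C^j$ on $(0,R)\cup(R,+\infty)$, the $j$-th step requiring that the companion profile already be $C^{j}$ at the radii appearing in those endpoint terms. Since every term in \eqref{eq:propplan-1d} is bounded and $\partial_tG_a$ is bounded away from $0$, the implicit equation also yields a uniform bound on $w_a'$, hence the global Lipschitz continuity on $[0,+\infty)$ (including at $r=0$ and $r=R$, where $w_a$ is only continuous and a loss of $C^1$ regularity is expected).

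The main obstacle is this last regularity transfer near the two exceptional radii. At $r=0$, $\rho$ is merely Lipschitz at $\bR_a$, so nothing beyond Lipschitz continuity of $w_a$ can be extracted from \eqref{eq:propplan-1d}; at $r=R$ the sphere about $\bR_a$ passes through $\bR_b$, and a cusp of $\rho$ at $\bR_b$ contributes, after azimuthal averaging, a term of the type $(r-R)|r-R|$ (it is $C^1$ but not $C^2$), so one genuinely cannot push beyond $C^k$ on $(0,R)\cup(R,+\infty)$. The technical heart of the induction is then to verify that, when bootstrapping the equation for $w_a$, the arguments of $w_b$ coming in through the moving limits $|r-R|$, $r+R$ always lie in a range on which $w_b$ has \emph{already} been shown to have the needed derivatives, so that the induction closes on $(0,R)\cup(R,+\infty)$; together with justifying differentiation under the integral sign at the endpoints and controlling the one-sided derivatives of $w_a$ at $0$ and $R$, this is where essentially all of the work lies. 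Everything else — the dichotomy, the bound $M_\rho$, and the implicit-function-theorem mechanics — is soft.
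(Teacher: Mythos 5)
Your proposal is correct and follows essentially the same route as the paper's proof: spherical averaging of the Euler--Lagrange relation to get the dichotomy, monotonicity of $t\mapsto G_a(r,t)$ to define $w_a^+$, the change of variables to a one-dimensional integral over $s\in[\,|r-R|,r+R\,]$ with density $s/(2rR)$, a Lipschitz implicit function theorem where $w_a^+>0$, a subsequence/dominated-convergence argument for continuity at zeros, and an implicit-function-theorem bootstrap for the $C^k$ statement. The bootstrap details you flag as "where the work lies" are treated no more explicitly in the paper (which simply invokes "a simple bootstrap argument" on the spherical-average form), so this is not a gap relative to the paper's own level of rigor.
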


\section{Distributed Multipole Analysis}
\label{sec:DMA}

Other schemes allow to represent the electronic density as a sum of atom-centered contributions. Mulliken population analysis \cite{Mulliken1955}, and Distributed Multipole Analysis (DMA) \cite{Stone1981,Stone1985} -- which can be viewed as a generalization of Mulliken scheme to higher-order multipole moments -- belong to the latter. These schemes are fully basis-set oriented, in the sense that they use as a starting point a decomposition of the density of the form 
\begin{equation}
\rho(\mathbf{r}) = \sum_{k=1}^{M'} f_k(\mathbf{r}-\mathbf{a}_k)
\label{general_expression_density_for_possible_redistribution}
\end{equation}
where ${\cal A} :=\{{\mathbf a}_k \}_{1 \le k \le M'}$ is a set of points of the physical space $\R^3$, and $f_k : \R^3 \to \R^3$ functions for which the multipoles
\begin{equation}
(Q_{\ell m}^k) = K_\ell^m \int_{\mathbb{R}^3} \mathcal{R}_{\ell}^{m}(\mathbf{r}) f_k(\mathbf{r})  d\mathbf{r},
\label{eq:multipole_moments_f_k}
\end{equation}
with $\mathcal{R}_{\ell}^{m}(\mathbf{r})=|\mathbf{r}|^{\ell} Y_{\ell}^{m}\left( \frac{\mathbf{r}}{|\mathbf{r}|}\right)$ (real solid harmonics using $L^2(\S^2)$-orthonormal real spherical harmonics),
are easily computable. The coefficients $K_\ell^m$ are real normalization coefficients (for instance, $K_0^0=\sqrt{4\pi}$ and $K_1^m = \frac{\sqrt{4\pi}}{3}$). 

The purpose of DMA methods is to redistribute the multipoles $Q_{\ell m}^k$ carried by the points $\mathbf{a}_k$ to multipoles $\widetilde Q_{\ell m}^j$ carried by some chosen points ${\mathbf S}_j$,  called DMA expansion sites.  The set ${\cal S}=\{{\mathbf S}_j\}_{1 \le j \le J}$ can coincide with the set of atomic positions, but can include other points. For instance, Mulliken population analysis defines overlap populations (that can be attributed to the bonds and allocated to bond centers), and the DMA can also include charge contributions from \textit{e.g.} bond centers and from many other points. 

\medskip

The rationale of the distributed multipole analysis is thus the following: 
\begin{enumerate}
\item first,  the multipole moments $(Q_{\ell m}^k)$ are computed. The latter will be coined multipole moments \textit{at natural centers} in the following. In the case of an electronic density computed in a basis of Gaussian Type Orbitals (GTOs) -- as used in most quantum chemistry applications --, there is an analytical expression for these elementary moments, allowing for a very fast computation;
\item then, these multipoles are redistributed to the DMA expansion sites ${\mathbf S}_j$, giving rise to the redistributed multipoles $\widetilde Q_{\ell m}^j$ carried by the sites ${\mathbf S}_j$.
\end{enumerate}
These two aspects are discussed in the following two sections.

\subsection{Multipole moments with respect to natural centers}

We recall here the procedure for rewriting as \eqref{general_expression_density_for_possible_redistribution} an electronic density $\rho$ issued from a quantum chemistry calculation in a Gaussian type orbital (GTO) basis set, and computing the corresponding multipoles $Q_{\ell m}^k$. First, $\rho$ is decomposed as
\begin{equation}
\rho(\mathbf{r}) = \sum_{\mu,\nu=1}^{N_b} P_{\mu\nu}
\,
{\chi}_{\mu}(\mathbf{r}-\bR_{a_\mu})
{\chi}_{\nu} (\mathbf{r}-\bR_{a_\nu}),
\label{decomposition_density_n_density_matrix_pGTOs}
\end{equation}
where $({\chi}_{\mu})_{1 \le \mu \le N_b}$ is a basis of {\em primitive} Gaussian polynomial functions, $a_\mu$ the label of the nucleus $\chi_\mu$ is attached to, and $(P_{\mu\nu})_{1 \le \mu,\nu \le N_b}$ the density matrix in this basis. The matrix $(P_{\mu\nu})_{1 \le \mu,\nu \le N_b}$ is easily obtained from the density matrix in the {\em contracted} GTO basis set used for the computation of $\rho$ by a suitable linear transform. Each $\chi_\mu$ is of the form
\begin{equation}
{\chi}_{\mu}(\mathbf{r})= N_{\mu} \mathcal{R}_{\ell_\mu}^{m_\mu}(\mathbf{r}) e^{-\zeta_{\mu}|\mathbf{r}|^2},
\end{equation}
with $\ell_\mu$ and $m_\mu$ its angular momentum quantum numbers, $\zeta_{\mu}$ the Gaussian exponent controlling its decay rate, and $N_{\mu}$ a normalization constant. Since 
\begin{equation}
    e^{-\zeta_\mu \left|\mathbf{r}-\mathbf{R}_{a_\mu} \right|^2} e^{-\zeta_\nu \left|\mathbf{r}-\mathbf{R}_{a_\nu}\right|^2} = K_{\mu\nu} e^{-(\zeta_{\mu}+\zeta_{\mu}) \left|\mathbf{r}-\mathbf{R}_{\mu\nu}\right|^2},
    \label{product_two_gaussians_centered_different_points}
\end{equation}
with $K_{\mu\nu}=e^{-\frac{\zeta_{\mu} \zeta_{\mu}}{\zeta_{\mu}+\zeta_{\nu}}\left|\mathbf{R}_{a_\mu}-\mathbf{R}_{a_\nu}\right|^2}$, and $\mathbf{R}_{\mu\nu}=\frac{\zeta_{\mu} \mathbf{R}_{a_\mu} + \zeta_{\nu} \mathbf{R}_{a_\nu}}{\zeta_{\mu}+\zeta_{\nu}}$, it holds that 
\begin{equation}
    {\chi}_{\mu} (\mathbf{r}-\bR_{a_\mu}) 
    {\chi}_{j} (\mathbf{r}-\bR_{a_\nu}) 
    = 
    K_{\mu\nu} N_{\mu} N_{\nu} \, 
    \mathcal{R}_{\ell_\mu}^{m_\mu}(\mathbf{r}-\bR_{a_\mu}) \,
    \mathcal{R}_{\ell_\nu}^{m_\nu}(\mathbf{r}-\bR_{a_\nu}) \,
    e^{-(\zeta_{\mu}+\zeta_{\mu}) \left|\mathbf{r}-\mathbf{R}_{\mu\nu}\right|^2},
\label{expression_product_two_primitive_GTOs}
\end{equation}
so that the electronic density given by~\eqref{decomposition_density_n_density_matrix_pGTOs}
can be written as \eqref{general_expression_density_for_possible_redistribution},
where the functions $f_k:\mathbb{R}^3\to\mathbb{R}$ are Gaussian polynomials of the form \eqref{expression_product_two_primitive_GTOs}, and $\mathcal{A} := \left\lbrace \mathbf{a}_k \right\rbrace_{1 \le k \le M'}$ a set of points in the physical space, (strictly) containing the positions of the nuclei. 

\medskip

The key quantities for the calculation of the distributed multipole moments are thus the multipole moments with respect to natural expansion centers $\mathbf{R}_{\mu\nu}$:
\begin{equation}
   Q_{\ell m}^{\mu\nu}  =P_{\mu\nu} K_\ell^m \int_{\mathbb{R}^3} \mathcal{R}_{\ell}^{m}(\mathbf{r}) \chi_{\mu}(\br- \bR_\mu+\bR_{\mu\nu}) \chi_{\nu}(\mathbf{r}- \bR_\nu+\bR_{\mu\nu}) d\mathbf{r} .
    \label{eq:multipole_moment_center_overlap_center_P_i_j}
\end{equation}
Given the expression \eqref{expression_product_two_primitive_GTOs}, the series of multipole moments $\{Q_{\ell m}^{\mu\nu} \}_{\ell \in \mathbb{N},m=-\ell,\ldots,\ell}$ is finite and can be easily evaluated numerically, by expanding the product of the two solid harmonics in  \eqref{expression_product_two_primitive_GTOs} as a weighted sum of homogeneous polynomials, multiplying it with the solid harmonic $\mathcal{R}_{\ell}^{m}(\mathbf{r})$  (homogeneous polynomial) and finally using the series of tabulated integrals:
\begin{equation}
I_q= \int_{0}^{+\infty} u^q e^{-u^2}du = \frac{\Gamma\left( \frac{q+1}{2}\right)}{2},
\end{equation}
for all needed $q$ integer values, where $\Gamma$ is the Euler Gamma function. 

\subsection{Redistribution formula}

Once multipole moments at natural centers are computed, a lower number of expansion sites (than the total number of overlap centers \textit{i.e.} $|\mathcal{A}|$) may be desired to perform the distributed multipole expansion, as the number of overlap centers grows quadratically with the number of atoms or basis size. For instance, only nuclei positions may be retained as final centers (e.g. to use these local multipole moments in polarizable force fields such as AMOEBA \cite{Shi2013}), or all nuclei and bond centers \cite{Stone1981,Vigne-Maeder1988}. The multipole moments at natural centers, which are easy to compute (see above), have thus to be redistributed to the chosen final expansion sites. We recall that the set of final expansion sites is a choice taken by the user.

\medskip

This requires two ingredients. First, the introduction of redistribution weights in order to distribute the multipoles from natural sites not belonging to the set of final sites. 
Second, a transformation of a multipole corresponding to one expansion center to a multipole corresponding to a different expansion center which is a linear transformation that is known in the Fast Multipole Method (FMM) framework as $M2M$-operator~\cite{fortin2006algorithmique} (used here in the context of real-valued multipolar coefficients).\\

We first focus on the latter. We briefly present the resulting formula, and refer to \cite{these_Robert} for a detailed derivation:
\begin{align}
 Q_{\ell m}^{\mu\nu \to j} 
&= 
\frac{ \Re \left( \left(  Q_{\ell,-m}^{\mu\nu \to j}  \right)^{\mathbb{C}} \right)  + (-1)^m \Re \left( \left(  Q_{\ell m}^{\mu\nu \to j}  \right)^{\mathbb{C}} \right) }{\sqrt{2}} \quad \mbox{ if } m > 0, 
\label{redistributed_multipole_S_final_expression_m_larger_0}
\\
 Q_{\ell m}^{\mu\nu \to j} 
&= 
\frac{ (-1)^{|m|} \Im \left( \left(  Q_{\ell,-m}^{\mu\nu \to j}  \right)^{\mathbb{C}} \right) - \Im \left( \left(  Q_{\ell m}^{\mu\nu \to j}  \right)^{\mathbb{C}}\right) }{\sqrt{2}} \quad \mbox{ if } m < 0,
\label{redistributed_multipole_S_final_expression_m_smaller_0}
\\
 Q_{\ell,0}^{\mu\nu \to j} 
&= 
\Re \left( \left(  Q_{\ell,0}^{\mu\nu \to j}  \right)^{\mathbb{C}}\right),
\label{redistributed_multipole_S_final_expression_m_equal_0}
\end{align}
where the complex $M2M$-operator is given by 
\begin{align}
    \left(  Q_{\ell m}^{\mu\nu \to j}  \right)^{\mathbb{C}}
    = 
    \sum_{\ell'=0}^\ell \sum_{m'=-\ell'}^{\ell'} c_{\ell'm'}^{\ell m} \left| \mathbf{R}_{\mu\nu} - \mathbf{S}_j \right|^{\ell-\ell'} {\cal Y}_{\ell-\ell'}^{m-m'} \left( \frac{\mathbf{R}_{\mu\nu} - \mathbf{S}_j}{\left|\mathbf{R}_{\mu\nu} - \mathbf{S}_j\right|} \right)  \left( Q_{\ell',m'}^{\mu\nu} \right)^{\mathbb{C}},
    \label{formula_redistribution_COMPLEX_multipoles}
\end{align}
where ${\cal Y}_{\ell}^{m}$ denote the complex ($L^2(\S^2)$-orthonormal) spherical harmonics and 
$
    c_{\ell',m'}^{\ell m} =  \sqrt{\binom{\ell+m}{\ell'+m'} \binom{\ell-m}{\ell'-m'}}
$
a normalization factor. 
Complex arithmetic operations are used since it simplifies the presentation of the formulae.
Finally, the complex multipole moments are simply given by 
\begin{equation}
    \left( Q_{\ell m}^{\mu\nu} \right)^{\mathbb{C}}
    = P_{\mu\nu} K_\ell^m \int_{\R^3} \left|\mathbf{r}\right|^\ell {\cal Y}_\ell^m\left( \frac{\mathbf{r}}{\left|\mathbf{r}\right|} \right) \chi_{\mu}(\br- \bR_\mu+\bR_{\mu\nu}) \chi_{\nu}(\mathbf{r}- \bR_\nu+\bR_{\mu\nu}) d\mathbf{r} .
    \label{Q_l_m_i_j_center_S_complex}
\end{equation}

The multipole moments are redistributed to the final expansion sites $\mathbf{S}_j$ using a formula of the form 
\begin{equation}
\widetilde Q_{\ell m}^j = \sum_{(\mu,\nu)} C_{\mu\nu \to j} \left( Q_{\ell m}^{\mu\nu \to j} \right)^{\mathbb{R}},
\label{eq:total_multipole_moment_point_S_after_redistribution_FINAL}
\end{equation}
where the $C_{\mu\nu \to j}$'s are  {\em user-specified} redistribution weights, representing the part of the multipolar distribution centered at $\mathbf{R}_{\mu\nu}$ allocated to the expansion site $\mathbf{S}_j$. It holds 
$$
C_{\mu\nu \to j}  \ge 0, \quad \sum_{j=1}^p C_{\mu\nu \to j} =1, \quad C_{\mu\nu \to j}=1 \mbox{ if } {\mathbf S}_j=\bR_{\mu\nu}.
$$
In particular, if $\mathbf{R}_{\mu\nu}$ is one of the final expansion sites, then $Q_{\ell m}^{\mu\nu}$ is fully allocated to this site. As a consequence, $Q_{\ell m}^{\mu\nu \to j}$ needs to be computed only if  $\mathbf{R}_{\mu\nu}$ is {\em not} one of the final expansion sites. It is up to the user to choose both the final expansion sites and the values of the weight coefficients $C_{\mu\nu \to j}$. A complete flexibility is allowed in our DMA code \cite{DMA_NEW_CODE}. For instance, following Stone's \textit{neighbor-takes-it-all} strategy implemented in GDMA code \cite{stone2005distributed} would amount to take:
\begin{equation}
C_{\mu\nu\to j} = \left\{ \begin{array}{ll}
1  \mbox{ if } \mathbf{S}_j \mbox{ is the final expansion site nearest to } \mathbf{R}_{\mu\nu}, \\
0 \mbox{ otherwise},
\end{array} \right.
\label{eq:Stone_redistribution_strategy}
\end{equation}
or $\frac{1}{q}$ for each of the $q$ final sites equally close to $\mathbf{\bR}_{\mu\nu}$. This strategy means that multipole moments at natural centers are only redistributed \textit{locally}, at the nearest neighbor final site. An obvious drawback of this approach is that the redistributed moments $\widetilde Q_{\ell m}^j$ are clearly not differentiable, nor even continuous, with respect to the nuclear coordinates.
We have also implemented, and provide as a possible user-choice, 
the more balanced rule of Vigne-Maeder \textit{et al.} \cite{Vigne-Maeder1988}, which amounts to choose the following weight coefficients:
\begin{equation}
C_{\mu\nu \to j} = \frac{\frac{1}{|\mathbf{S}_j - \mathbf{\bR}_{\mu\nu}|}}{ \underset{k}{\sum} \frac{1}{|\mathbf{S}_k -  \mathbf{\bR}_{\mu\nu}|}},
\label{eq:Vigne_Maeder_redistribution_strategy}
\end{equation}
where the sum in the denominator runs over the $p$ chosen final DMA sites. This time, multipole moments at natural centers that are overlooked in the set of DMA expansion sites are redistributed to \textit{all} final DMA sites $\left\lbrace \mathbf{S}_j \right\rbrace_j$, with larger contributions to nearer sites.

\subsection{Multipolar expansion of the electrostatic potential}

Once the local distributed multipole moments are computed, the electrostatic potential generated by the charge density distribution (that the latter summarize) can be evaluated by a simple sum, instead of a three-dimensional integral needed to evaluate the exact, quantum potential:
$$V(\mathbf{r}) = \int_{\mathbb{R}^3} \frac{\rho(\mathbf{r'})}{\left| \mathbf{r} - \mathbf{r'} \right|} d \mathbf{r'}. $$
The multipolar expansion of the total potential with respect to the expansion centers $\mathbf{R}_{\mu \nu}$ then writes:
\begin{equation}
\displaystyle
     V_{\rm M}(\mathbf{r}) =\sum_{\mu,\nu=1}^{N_b}  \left( \sum_{\ell=0}^{+\infty} \frac{4\pi}{(2\ell+1)} \sum_{m=-\ell}^{\ell} \frac{ Q_{\ell m}^{\mu\nu} Y_\ell^m\left(\frac{\mathbf{r}-\mathbf{R}_{\mu\nu}}{\left|\mathbf{r}-\mathbf{R}_{\mu\nu} \right|} \right)}{K_\ell^m \left|\mathbf{r}-\mathbf{R}_{\mu\nu} \right|^{\ell+1}} \right),
     \label{eq:multiple_center_multipolar_expansion}
\end{equation}
where the sum over $\ell$ is actually finite and terminates for $\ell=\ell_\mu+\ell_\nu$, the sum of the degrees of the two real solid harmonics involved in the primitive GTOs $\chi_\mu$ and $\chi_\nu$ respectively. The multipolar expansion $V_{\rm M}$ is an excellent approximation of $V$ far enough from the molecule, and a very poor approximation in the region of space where the electronic density is large. In the intermediate region, it is valid \textit{up to a penetration term} \cite{liu2019amoeba+,rackers2017optimized}.

\medskip

\noindent After redistribution of the multipole moments to the DMA final expansion centers, the potential generated by the DMA distributed local multipole moments writes:
\begin{equation}
   V_{\rm M}^{\rm DMA}(\mathbf{r}) = \sum_{j=1}^J \left( \sum_{\ell=0}^{+\infty} \frac{4\pi}{(2l+1)} \sum_{m=-\ell}^{\ell} \frac{ \widetilde Q_{\ell m}^j Y_\ell^m\left(\frac{\mathbf{r}-\mathbf{S}_j}{\left|\mathbf{r}-\mathbf{S}_j \right|} \right)}{K_\ell^m \left|\mathbf{r}-\mathbf{S}_j \right|^{\ell+1}} \right)
   \label{eq:multiple_center_multipolar_expansion_DMA_sites_approx_1}
\end{equation}
where the total multipole moments $(\widetilde Q_{\ell m}^j)_{1 \le j \le J}$ with respect to final expansion centers $\mathbf{S}_j$ include the contributions of multipole moments naturally centered at $\mathbf{S}_j$ (if any), as well as of the reallocated (\textit{e.g.} neighboring) multipoles -- see Eq. \eqref{eq:total_multipole_moment_point_S_after_redistribution_FINAL}. The series of multipole moments $\left(\widetilde Q_{\ell m}^j\right)_{l,m}$ with origin $\mathbf{S}_j$ is in general infinite, and must be truncated in practice. For this reason, $V_{\rm M}^{\rm DMA}$ is always an approximation of $V_{\rm M}$ as soon as one of the natural centers $\mathbf{R}_{\mu\nu}$ is not kept as a final DMA expansion site, even away from the numerical support of the density $\rho$.

\medskip

Let us finally mention that the new version of DMA \cite{stone2005distributed} can be thought as a mix between this DMA basis-set oriented method and real-space oriented (partitionning) methods, as it associates a redistribution of the density matrix contributions for some (the least diffuse) pairwise products of GTOs with a real-space integration for the contributions of the others (the most diffuse) pairwise products of GTOs, thus allowing to stabilize local multipole moments in the limit of a very large Atomic Orbitals basis for the molecular density $\rho$.

\section{Numerical results}
\label{sec:numerical_results}

We have implemented the four methods of the ISA family described in Section \ref{sec:AIM_methods_based_objective_functionals}, namely ISA, GISA, L-ISA and MB-ISA, in Python language with a specific focus on accurate quadrature schemes for diatomic molecules. This serves as a proof-of-concept to highlight the theoretical results obtained in this article and can by far not be considered as an exhaustive computational study.

\medskip

The Distributed Multipole Analysis (DMA) method (Section \ref{sec:DMA}), has also been implemented in Python language in a new, more modular package, allowing for user-specified redistribution strategies (see Eq.\eqref{eq:total_multipole_moment_point_S_after_redistribution_FINAL} and e.g. the two possible choices of coefficients \eqref{eq:Stone_redistribution_strategy} and \eqref{eq:Vigne_Maeder_redistribution_strategy}). Our code has been validated by comparing the results of local multipole moments (up to hexadecapoles) obtained on test molecules (H$_2$O, NH$_3$, CH$_4$, benzene, and ions such as ClO$^{-}$) with those obtained by the GDMA code \cite{stone2005distributed}, using Stone \textit{neighbor-takes-it-all} redistribution strategy (equation \ref{eq:Stone_redistribution_strategy}). Several basis sets were tested for each molecule. We obtained identical distributed (atomic) local multipole moments up to hexadecapoles, up to numerical accuracy.

\medskip

All these codes are built upon the functionalities of {\tt cclib} library \cite{Oboyle2008}, which allows to parse in a generic way the output of most quantum chemistry packages (14 different packages are supported by this library), without any specific parsing required by the user. This new feature enables an easy comparison of DMA or ISA partitionning results among the most popular quantum chemistry codes,  providing more interoperability. The parsing by the existing Stone's GDMA code \cite{stone2005distributed} for DMA was indeed only implemented for Gaussian \cite{g16} or Psi4 \cite{turney2012psi4} outputs, and similarly for the existing implementations of the ISA methods in Horton code \cite{HORTON}.

\subsection{Implementational details}
\label{subsec:implementation_details}

\noindent
{\bf Solver.} 
The ISA AIM decomposition has been computed using two different methods: (i) the fixed-point algorithm described in Section \ref{sec:ISA_iterations}, and (ii) an alternative scheme based on the Newton method applied to a suitable formulation of the Euler-Lagrange equations of the problem (see~\cite{these_Robert} for further details). Both methods yielded the same results (up to numerical accuracy) in our test cases.

\medskip

\noindent
{\bf Discretization and numerical quadrature.}
The AIM methods rely either on a real-space discretization, or on a basis-space representation of the functions spanning the linear or non-linear approximation space. 

As already detailed in Section \ref{sec:AIM_methods_based_objective_functionals}, the GISA,  L-ISA and MB-ISA methods rely on basis-set representations of pro-atomic densities, through linear combination of normalized Gaussians (for GISA and L-ISA) or Slater functions (for MB-ISA). For GISA and L-ISA, only the coefficients of the linear combination are optimized. For MB-ISA, the coefficients of the expansion \textit{and} the exponents involved in the Slater basis functions are both optimized simultaneously. Typically, a small number of shells (e.g. 4 for H and 6 for C, O and N \cite{verstraelen2012conformational}) are assigned to each atom in GISA and MB-ISA~\cite{verstraelen2016minimal} methods. 

On the other hand, the original ISA algorithm introduced by Lillestolen \textit{et al.} \cite{lillestolen2008redefining,Lillestolen2009} (Section \ref{sec:ISA_iterations}) relies on a real space radial and angular discretization by point-evaluations.

In the family of ISA-methods, several integrals arise both when computing the total mass $N_a$ of the atomic densities (imposed as a constraint in the GISA, L-ISA and MB-ISA schemes), the first-order moments of the atomic densities (imposed as a constraint in the MB-ISA scheme), or when computing spherical averages in the original ISA method.
This requires a numerical quadrature which is given by a tensor product of a radial and spherical grid. 
The radial grid can be e.g. an 1D integration grid based on $N_{\rm r}$ Gauss quadrature points extending up to some maximal $R_{\max}$, while the angular grid can be e.g. a Lebedev grid \cite{lebedev1999quadrature} made of $N_{\rm s}$ points on the unit sphere $\mathbb{S}^2$ (with their associated weights). For each atom $a$, the discretization points are defined as follows:
\begin{equation}
    \forall a \in [\![1,M]\!], \;  \forall i \in [\![1,N_{\rm r}]\!], \;  \forall j \in [\![1,N_{\rm s}]\!],  \; \qquad \mathbf{r}_{i,j}^{a} := \mathbf{R}_a+r_i^a \bm \sigma_j,
\end{equation}
denoting $[\![1,M]\!]=\{1,2,\ldots,M\}$ and where $\mathbf{R}_a$ is the position of atom $a$, $(r_i^a)_{i \in [\![1,N_{\rm r}]\!]}$ are the (atom-dependent) radial grid points (which can be e.g. equally-spaced or logarithmic), and $(\bm\sigma_j)_{j \in [\![1,N_{\rm s}]\!]}$ are the Lebedev grid points grids used for the quadrature on the unit sphere. 
The molecular density $\rho$ is precomputed at these points $\mathbf{r}_{i,j}^{a}$, yielding tabulated values $\rho_{i,j}^{a} = \rho(\mathbf{r}_{i,j}^{a})$ used throughout the algorithm repeatedly. 

In the special case of symmetric systems, such as for some diatomic molecules (associated to an electronic density invariant by any rotation around the axis of the molecule), the spherical average on the unit sphere $\mathbb{S}^2$ can be replaced by a simple one-dimensional integration on the $\theta$ variable using $N_{\rm s}$ points, e.g. using a Gauss-Legendre quadrature by aligning the atoms on the $z$-axis. All integrals on the radial variable $r$ are computed likewise.

\medskip

\noindent
{\bf Interpolation.}
In the ISA-method, where (pro-)atomic densities are known only on a real space grid, we use a piecewise linear (sometimes referred to $\mathbb{P}^1$) interpolation between the nearest neighbors. We use such an interpolation for instance to evaluate the pro-atomic density $\rho_a^0$ of atom $a$ at a radial discretization value $r_i^b$ associated to a near atom $b$. Indeed, only the values $\rho_a^0(r_i^a)$ for $i\in \![1,N_{\rm r}]\!]$, at the radial grid point $r_i^a$, are  known. 
On the contrary, for the GISA, L-ISA and MB-ISA schemes, the expressions of the pro-atomic densities are (by construction) analytical due to the expansion on Gaussian or Slater basis functions and no interpolation is thus needed for these methods.

\medskip

\noindent
{\bf Optimization.}
For the resolution of the GISA problem, in particular for the minimization of the quadratic functional under linear (equality and inequality) constraints \eqref{eq:GISA5}, we use standard routines for quadratic programming problems \cite{goldfarb1982dual,goldfarb1983numerically} implemented in the QUADPROG Python function. For solving the convex minimization \eqref{eq:G0-L-ISA-2} arising in L-ISA, we use the CVXOPT Python package \cite{andersencvxopt}.
 
\medskip

\noindent
{\bf Parameterization.}
For GISA and L-ISA methods, the atomic shell exponents $\left(\alpha_{a,k}\right)$, with $a\in [\![1,M]\!]$, $k\in [\![1,m_{z_a}]\!]$,
are fixed once and for all. The solutions of the GISA and L-ISA schemes thus \textit{a priori} depend on the values of these exponents. 
For C, H, N and O atoms, we use the values of the exponents provided by Verstraelen \textit{et al.} for the GISA method \cite{verstraelen2012conformational}, obtained by fitting to the densities of isolated (neutral or ionized) atoms, so that the GISA Gaussian basis represents accurately enough all the pro-atomic densities associated to the quantum calculations on both isolated neutral atoms and ions. For other atoms, for which no values of the exponents are available, we apply the empirical formula provided by Verstraelen \textit{et al.} in the MB-ISA paper \cite{verstraelen2016minimal}:
\begin{equation}
    \forall k \in [\![1,m_{z_a}]\!], \qquad  
    \alpha_{a,k}=\frac{2 z_a^{1-\frac{k-1}{m_{z_a}-1}}}{a_0},
    \label{eq:empirical_rule_exponents}
\end{equation}
where $a_0$ is the value of 1 Bohr in Angstr\"om and $m_{z_a}$ the number of shells (derived from the atomic number $z_a$). For MB-ISA, the exponents $\left(\alpha_{a,k}\right)$, with $a\in [\![1,M]\!]$, $k\in [\![1,m_{z_a}]\!]$, of the Slater functions used to expand the pro-atomic densities are optimized instead of attributing fixed values.
As initial guess for these exponents, we simply take the values of the GISA (or L-ISA) exponents detailed above. These initial values of the exponents appear numerically not to matter (as regards the final fixed-point solution) in the MB-ISA scheme, which optimizes both the weight and the Slater exponent of every pro-atomic shell (see Section \ref{sec:MB-ISA}). Note that some choices of exponents lead to an ill-conditionned local overlap matrix $\mathbf{S}_{z_a}$, which can prove problematic in particular in the context of the GISA method (see Section \ref{sec:GISA}), where a linear system involving this matrix has to be solved.

For the expansion coefficients $\left(c_{a,k}\right)$, with $a\in [\![1,M]\!]$, $k\in [\![1,m_{z_a}]\!]$, which are optimized in the GISA, L-ISA and MB-ISA methods, several initial guesses that satisfy the property that the initial pro-atomic mass $N_a^{(0)} =\displaystyle \sum_{k=1}^{m_{z_a}} c_{a,k}$ equals the atomic number $z_a$, are possible, such as 
\[
    \forall k \in [\![1,m_{z_a}]\!], \qquad c_{a,k}^{(0)} = \frac{z_a}{m_{z_a}}
\]
for example. Other less-balanced initial guesses, e.g. putting all the initial weight on a specific Gaussian (or Slater) basis function:  
\[
    \forall k \in [\![1,m_{z_a}]\!], \qquad c_{a,k}^{(0)} =  \delta_{k,k_0} z_a, 
\]
with $k_0 \in [\![1,m_{z_a}]\!]$ and where $\delta_{i,j}$ denotes the Kronecker delta, are also possible. \\

\medskip

\noindent
{\bf Uniqueness and dependency on initial condition.}
For the GISA method, there is no guarantee of the uniqueness of the solution for the expansion coefficients, and local minima are possible (i.e. a dependence of the computed solution on the initial guess for the expansion coefficients). An example found numerically for a very simple test density is provided in the Appendix.
On the contrary, for L-ISA, the optimal expansion coefficients $\left(c_{a,k}^{\rm opt}\right)$ does not depend on the initial guess  $\left(c_{a,k}^{(0)}\right)$ due to the strict convexity. 
For MB-ISA, there is no guarantee of the uniqueness of the fixed-point solution, so that the numerically computed solution may depend on the initial guess. However, numerically, the MB-ISA solution appears quite robust to the guess (both in terms of expansion coefficients $c_{a,k}$ and of exponents $\alpha_{a,k}$), and no local minimum has been unveiled in this study on diatomic systems. 

\medskip

\noindent
{\bf Validation.}
Our implementation of the different existing ISA partitionning schemes has been validated on some test diatomic molecules, for several basis sets, by comparison to the Horton code \cite{HORTON}, specifically for the ISA and MB-ISA scheme. Both implementations (ours and Horton's) yielded the same local multipole moments (up to quadrupoles) up to numerical accuracy on the tested molecules. For GISA and L-ISA, our implementation was validated based on custom test atomic densities made of a sum of one or several normalized Gaussians, centered at both atomic positions (diatomic molecules only).

\subsection{Comparison of the different ISA schemes}

In this section, we compare typical pro-atomic densities profiles and the associated atomic moments (up to second order) for diatomic molecules or ions, at the equilibrium inter-atomic distance for  Hartree-Fock densities obtained with GAMESS~\cite{Schmidt1993}.\\

\paragraph*{CO molecule.\\}

Figure \ref{fig:comparison_ISA_schemes_CO_RHF_ACCD_atom_C} represents the numerically computed $r \mapsto \log\left(4 \pi r^2 \rho_a^0(r) \right)$ profiles solutions of GISA, L-ISA, MB-ISA and ISA for the carbon atom in the CO molecule, using a molecular density $\rho$ computed at the RHF/aug-cc-pVDZ level of theory. Qualitatively similar profiles are obtained for the oxygen atom. Note that the quantity $4 \pi r^2 \rho_a^0(r)$ represents the contribution of the atomic density $\rho_a$ (whose spherical average equals $\rho_a^0$) to the atomic charge $N_a = \bigintss_{0}^{+\infty} 4 \pi r^2 \rho_a^0(r) dr$ at a radial distance $r$.\\

\FloatBarrier
\begin{figure}[!htp]
    \centering
    \includegraphics[width=0.8\textwidth]{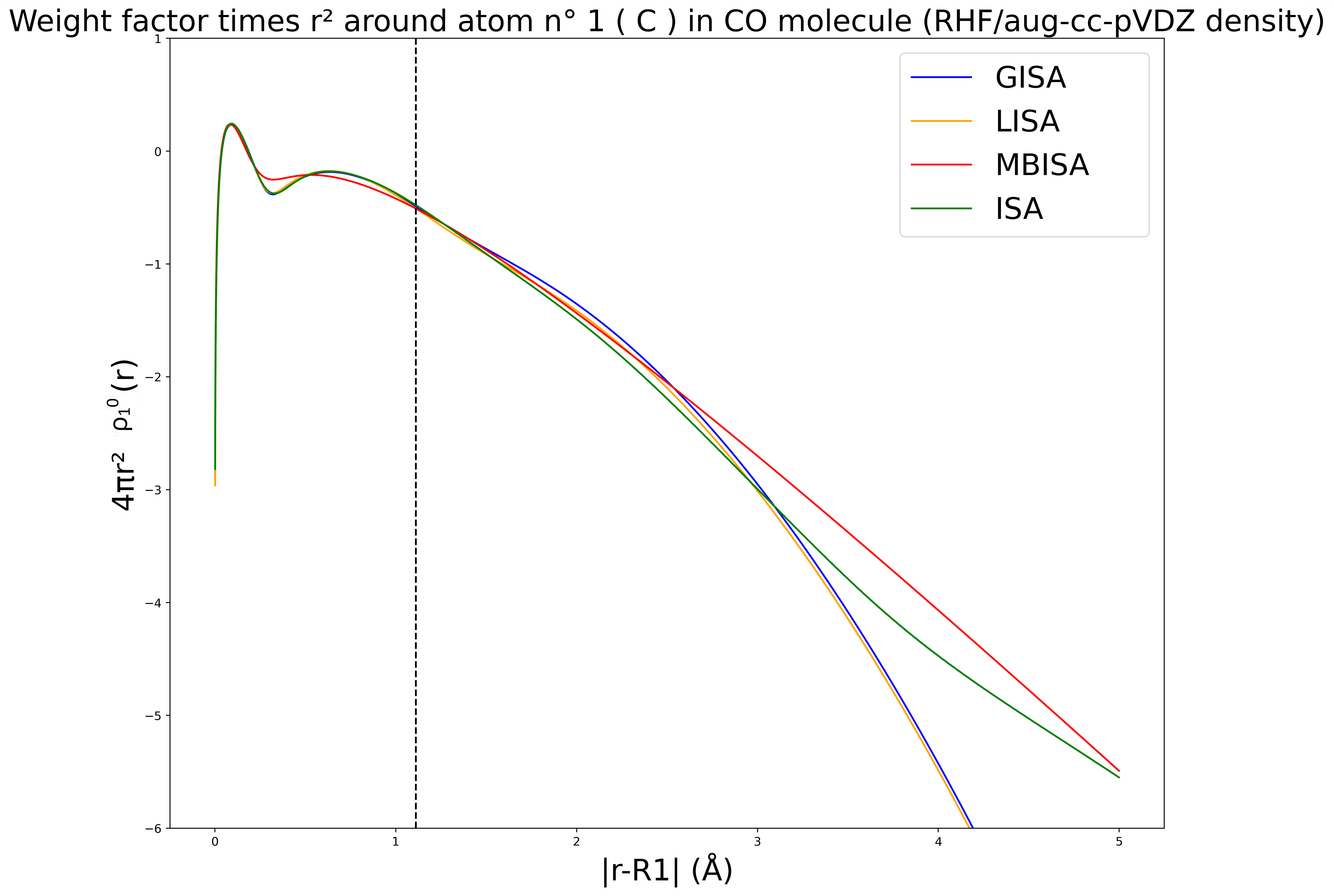}
    \caption{Profiles $r \longmapsto \log\left(4 \pi r^2 \rho_a^0(r) \right)$ for the carbon atom in CO (1.131 \AA{} interatomic distance, RHF/aug-cc-pVDZ level of theory), obtained by GISA, L-ISA, and MB-ISA and ISA methods. $N_{\rm r}=320$ Gauss-Legendre radial (up to R$_{\max}$=8 \AA) and $N_{\rm s}=150$ Gauss-Legendre angular points were used for both the quadratures and the ISA discretization. The black vertical dashed line indicates the position of the oxygen atom.}
   
    \label{fig:comparison_ISA_schemes_CO_RHF_ACCD_atom_C}
\end{figure}
\FloatBarrier

The GISA and L-ISA profiles look very similar but lead to slightly different atomic multipole moments (see Table \ref{tab:local_multipoles_CO_RHF_ACCD_equilibrium_ISA_ALL_METHODS}). The GISA, L-ISA and MB-ISA pro-atomic densities are (visually) very close to the ISA reference near the first maximum of the curve -- corresponding to the core electrons (1s) of the carbon atom. Near the second maximum -- corresponding to the 2s electrons shell -- the GISA and L-ISA profiles are also very close to the ISA reference profile, while the MB-ISA profile features a less pronounced local maximum. The GISA and L-ISA profiles then differ quite notably from the MB-ISA and ISA reference profile in the asymptotic regime.

This proximity of GISA and L-ISA to the ISA reference profile seems rather due to the choice of optimized exponents $\alpha_{a,k}$ for C and O atoms in GISA and L-ISA from Ref. \cite{verstraelen2012conformational}. For the N$_2$ molecule (also studied, results not reported), the results are indeed qualitatively similar to those obtained for CO. In the case of optimized exponents $\alpha_{a,k}$, the GISA and L-ISA Gaussian expansions of the pro-atomic densities variational space perform well to capture correctly the short-range features of $\rho_a^0$. However, this (visual) proximity of GISA and (quantitative) proximity of L-ISA to the reference ISA profile vanishes if the (fixed) exponents $\alpha_{a,k}$ used in GISA and L-ISA are not optimized for one of the atoms in the molecule. Moreover, in the case of non-optimized exponents, only L-ISA recovers qualitatively the reference ISA profile, but not GISA (see below the case of Cl atom in ClO$^{-}$ ion, Figure \ref{fig:comparison_ISA_schemes_ClO_RHF_631Gd_atom_Cl}, compared to Figure \ref{fig:comparison_ISA_schemes_ClO_RHF_631Gd_atom_O} for atom O in the same ion).\\

\FloatBarrier
\begin{table}[!htp]
    \centering
\begin{tabular}{|>{\centering}m{5.cm}||>{\centering}m{2.cm}|>{\centering}m{2.cm}|>{\centering}m{2.cm}|>{\centering}m{2.cm}|}
    \hline
     Multipole moment \\ (atomic units) &  GISA & L-ISA & MB-ISA & ISA  \tabularnewline
     \hline
     \hline
    Charge $q_1=q_C$ & 0.129 & 0.182 & 0.225 &  0.183 \tabularnewline
    \hline
    Dipole $d_z^{1}$ & 0.217 & 0.298  &  0.325 & 0.307 \tabularnewline
    \hline
    Dipole $d_z^{2}$ & -0.015 & 0.015  & 0.079 &  0.008 \tabularnewline
    \hline
     $Q_{xx}^{1}=Q_{yy}^{1}$ & 4.109 & 3.903 & 3.835 & 3.887 \tabularnewline
    \hline
     $Q_{xx}^{2}=Q_{yy}^{2}$ & 3.476 & 3.682 & 3.751 &  3.696 \tabularnewline
    \hline
     $Q_{zz}^{1}$ & 5.100 & 4.823 & 4.789 &  4.786 \tabularnewline
    \hline
     $Q_{zz}^{2}$ & 3.636 & 3.806 & 3.917 &  3.812 \tabularnewline
    \hline
       \end{tabular}
    \caption{Local charges, dipoles and second-order moments carried by C (atom 1) and O (atom 2) in CO (1.131 \AA{} interatomic distance, RHF/aug-cc-pVDZ level of theory) computed by GISA, L-ISA, MB-ISA and ISA methods. 320 Gauss-Legendre radial (up to R$_{max}$=8 \AA) and 150 Gauss-Legendre angular points were used for both the quadratures and for the ISA discretization.}
    \label{tab:local_multipoles_CO_RHF_ACCD_equilibrium_ISA_ALL_METHODS}
\end{table}
\FloatBarrier

The local multipole moments associated to the different ISA profiles of Figure \ref{fig:comparison_ISA_schemes_CO_RHF_ACCD_atom_C} are reported Table \ref{tab:local_multipoles_CO_RHF_ACCD_equilibrium_ISA_ALL_METHODS}. The results of the L-ISA scheme are overall the closest to the ISA reference, followed by MB-ISA and GISA, which appears less satisfactory (and for which the atomic monopole differs by more than 30 \% from the reference ISA monopole).

\paragraph*{ClO$^{-}$ ion.\\}

Figures \ref{fig:comparison_ISA_schemes_ClO_RHF_631Gd_atom_Cl} and \ref{fig:comparison_ISA_schemes_ClO_RHF_631Gd_atom_O} display the $r \mapsto \log\left(4 \pi r^2 \rho_a^0(r) \right)$ profiles of GISA, L-ISA, MB-ISA and ISA solutions for the chlorine and oxygen atoms respectively, using a molecular density $\rho$ computed at the RHF/6-31G(d) level of theory. Optimized exponents $\alpha_{a,k}$ for Cl atom have not been reported in the literature to our knowledge. We first tried the empirical rule \eqref{eq:empirical_rule_exponents} but finally assigned six shells (in GISA and L-ISA) to the chlorine atom with the same (optimized) exponents as for the oxygen atom, as it yielded slightly better (although not satisfactory, see below) results for the GISA profiles and multipole moments. The solution of the MB-ISA method was found not to depend on this initial choice of exponents for the Cl atom (empirical rule  \eqref{eq:empirical_rule_exponents} or same exponents as the O atom).

Contrary to the case of neutral molecules (CO, N$_2$) with optimized exponents for both atoms, the pro-atomic densities profiles of GISA, L-ISA and MB-ISA quantitatively differ from the reference ISA profile, with some common features (\textit{e.g.} three local minima for the L-ISA and ISA reference schemes, \textit{vs.} only two for MB-ISA). The GISA profile ends up with an nonphysical, too large (positive) charge (1.94 a.u.) on Cl atom (-2.84 a.u. on O atom) corresponding to a too quick (Figure \ref{fig:comparison_ISA_schemes_ClO_RHF_631Gd_atom_Cl}) or (resp.) too slow (Figure \ref{fig:comparison_ISA_schemes_ClO_RHF_631Gd_atom_O}) decrease in the Cl (resp. O) atomic density profile, despite a good short-range accordance with the ISA reference profile in the case of the O atom (using its optimized Gaussian exponents \cite{verstraelen2012conformational}).The other GISA atomic multipole moments are also either much too large or largely differing (more than 100 \%) from the ISA reference multipoles. The MB-ISA profile is associated to local multipole moments in fair accordance with the ISA reference multipoles (see Table \ref{tab:local_multippoles_ClO_RHF_6-31Gd_equilibrium_ISA_ALL_METHODS}) despite differing qualitatively quite notably from the ISA reference profile, in particular in the asymptotic regime. The latter is a feature rather due to the expansion on Slater functions, leading to a decrease at a smaller rate.

Quite notably, unlike the GISA scheme, the L-ISA method is also associated to local multipole moments in fair accordance with the ISA reference multipoles (Table \ref{tab:local_multippoles_ClO_RHF_6-31Gd_equilibrium_ISA_ALL_METHODS}), similarly to the MB-ISA method, although the latter optimizes on the (Slater) exponents while the L-ISA uses fixed (non-optimized) Gaussian exponents. The L-ISA scheme, which has been shown here to be mathematically more grounded than GISA, seems to be more robust with respect to the Gaussian exponent choices and yields more physical (and closer to the ISA reference profile) results, although this assumption would have to be confirmed on larger test sets.

\FloatBarrier
\begin{figure}[!htp]
    \centering
    \includegraphics[width=0.8\textwidth]{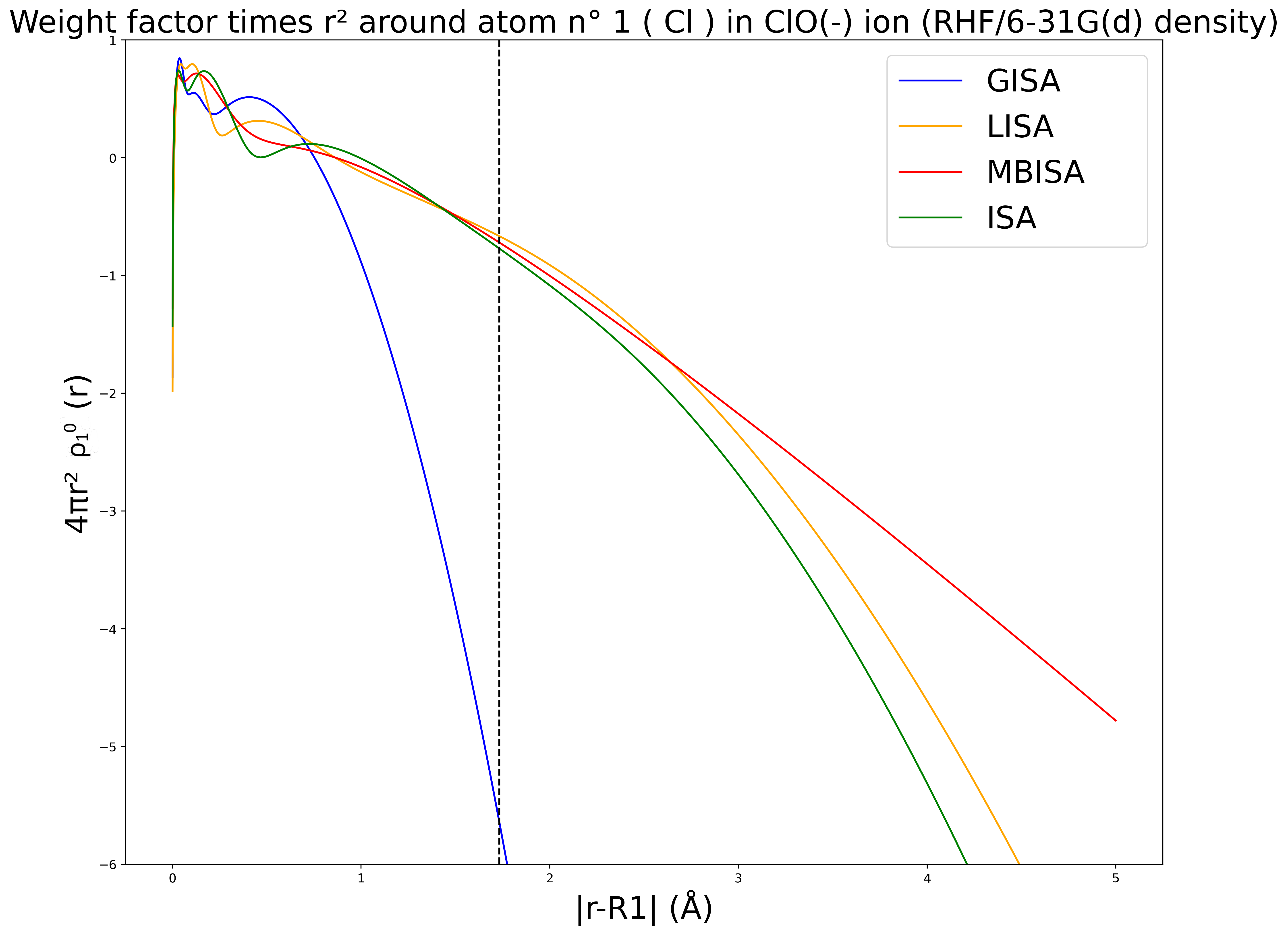}
    \caption{Profiles $r \longmapsto \log\left(4 \pi r^2 \rho_a^0(r) \right)$ for the chlorine atom in ClO$^{-}$ ion (1.733 \AA{} interatomic distance, RHF/6-31G(d) level of theory), obtained by GISA, L-ISA, and MB-ISA and ISA methods. $N_{\rm r}=500$ Gauss-Legendre radial (up to R$_{\max}$=8 \AA) and $N_{\rm s}=250$ Gauss-Legendre angular points were used for both the quadratures and the ISA discretization. The black vertical dashed line indicates the position of the oxygen atom.}
    \label{fig:comparison_ISA_schemes_ClO_RHF_631Gd_atom_Cl}
\end{figure}
\FloatBarrier

\FloatBarrier
\begin{figure}[!htp]
    \centering
    \includegraphics[width=0.8\textwidth]{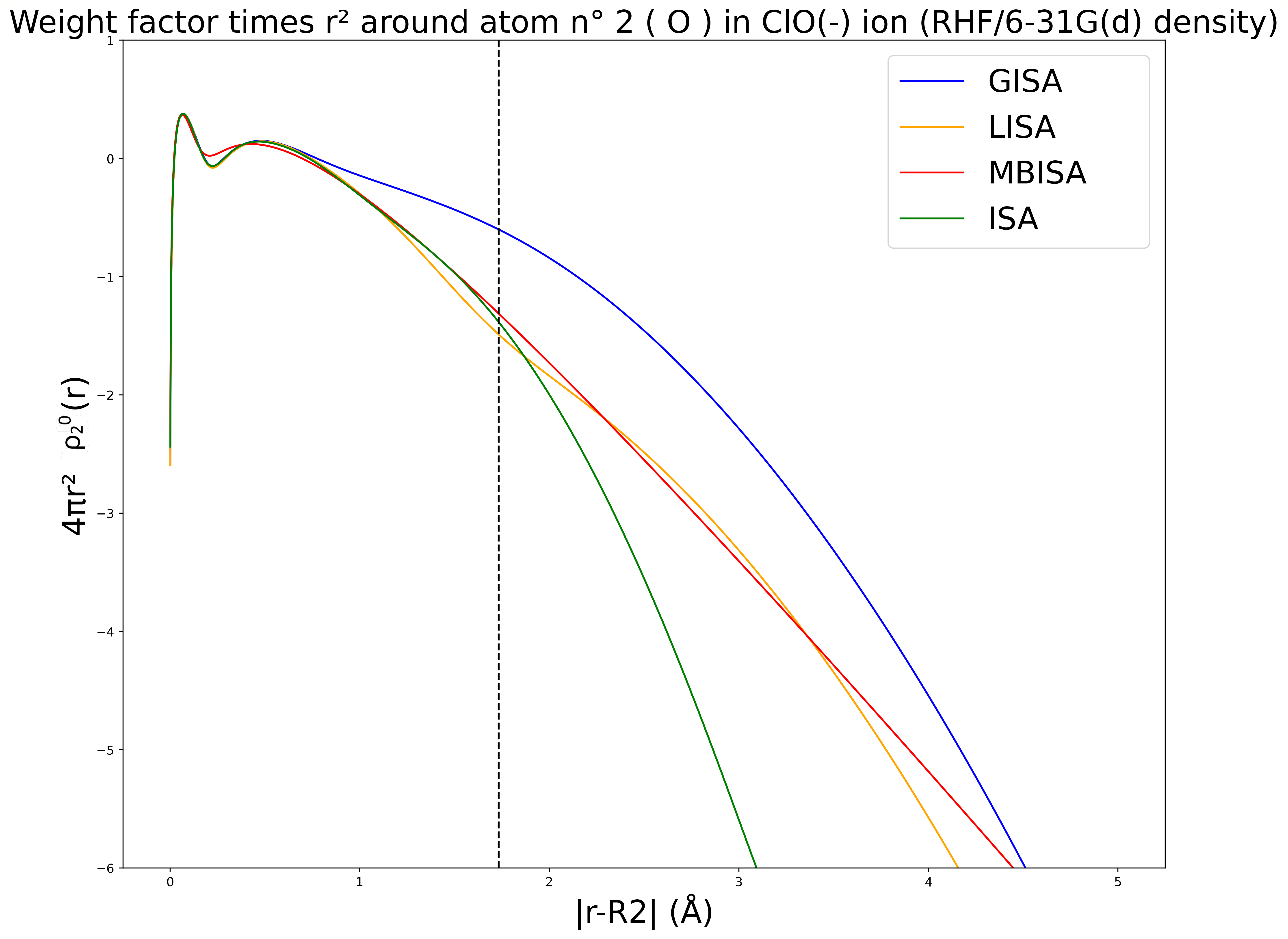}
    \caption{Profiles $r \longmapsto \log\left(4 \pi r^2 \rho_a^0(r) \right)$ for the oxygen atom of ClO$^{-}$ ion (1.733 \AA{} interatomic distance, RHF/6-31G(d) level of theory), obtained by GISA, L-ISA, and MB-ISA and ISA methods. $N_{\rm r}=500$ Gauss-Legendre radial (up to R$_{\max}$=8 \AA) and $N_{\rm s}=250$ Gauss-Legendre angular points were used for both the quadratures and the ISA discretization. The black vertical dashed line indicates the position of the chlorine atom.}
    \label{fig:comparison_ISA_schemes_ClO_RHF_631Gd_atom_O}
\end{figure}
\FloatBarrier

\FloatBarrier
\begin{table}[!htp]
    \centering
\begin{tabular}{|>{\centering}m{5.cm}||>{\centering}m{2.cm}|>{\centering}m{2.cm}|>{\centering}m{2.cm}|>{\centering}m{2.cm}|}
    \hline
     Multipole moment (atomic units) &  GISA & L-ISA & MB-ISA & ISA \tabularnewline
     \hline
     \hline
    Charge $q_1=q_{Cl}$ &  1.941 & -0.368  &  -0.328 & -0.342  \tabularnewline
    \hline
    Dipole $d_z^{1}$ &  -1.024 & -0.117  &  -0.198 & -0.232 \tabularnewline
    \hline
    Dipole $d_z^{2}$ &  -6.569 & 0.086  &  0.033 &  0.115 \tabularnewline
    \hline
     $Q_{xx}^{1}=Q_{yy}^{1}$ & 4.455  &  10.460 & 10.333  &  10.353 \tabularnewline
    \hline
     $Q_{xx}^{2}=Q_{yy}^{2}$ &  10.581 &  4.576 &  4.703 &  4.682 \tabularnewline
    \hline
     $Q_{zz}^{1}$ & 4.176 & 9.787 &  9.567 & 9.364 \tabularnewline
    \hline
     $Q_{zz}^{2}$ &   28.460 & 4.026  &  4.159 & 3.983 \tabularnewline
    \hline
       \end{tabular}
    \caption{Local charges, dipoles and second-order moments carried by Cl (atom 1) and O (atom 2) in ClO$^{-}$ (1.733 \AA{} interatomic distance, RHF/6-31G(d) level of theory), obtained by GISA, L-ISA, and MB-ISA and ISA methods. $N_{\rm r}=500$ Gauss-Legendre radial (up to R$_{\max}$=8 \AA) and $N_{\rm s}=250$ Gauss-Legendre angular points were used for both the quadratures and the ISA discretization.}
    \label{tab:local_multippoles_ClO_RHF_6-31Gd_equilibrium_ISA_ALL_METHODS}
\end{table}
\FloatBarrier

\subsection{Dissociation of diatomic molecules }

In this section, we apply the ISA multi-center decomposition schemes implemented in this work to the study of the dissociation of LiH. We consider ground-state densities obtained by Full CI calculations in a small basis sets (computed with PySCF \cite{sun2018pyscf,sun2020recent}), for different inter-atomic distances, and extract the corresponding one-body density matrices. We then compute the corresponding ISA atomic charges and dipoles.
The results are reported in Figure~\ref{fig:LiH_dissociation_curves}.

\FloatBarrier
\begin{figure}[htp]
  \centering
  \begin{tabular}{cc}
    \includegraphics[width=70mm]{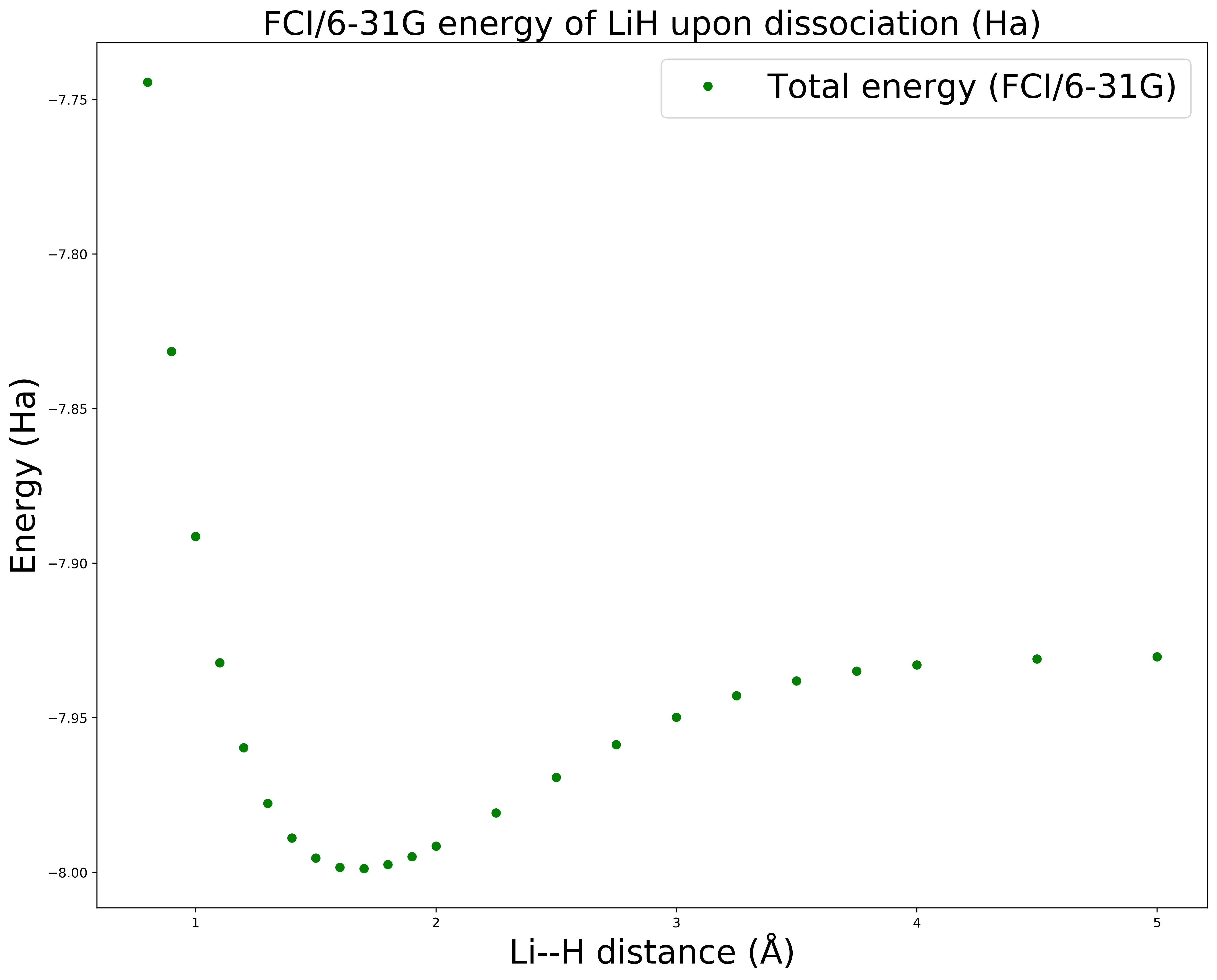}&
   \includegraphics[width=70mm]{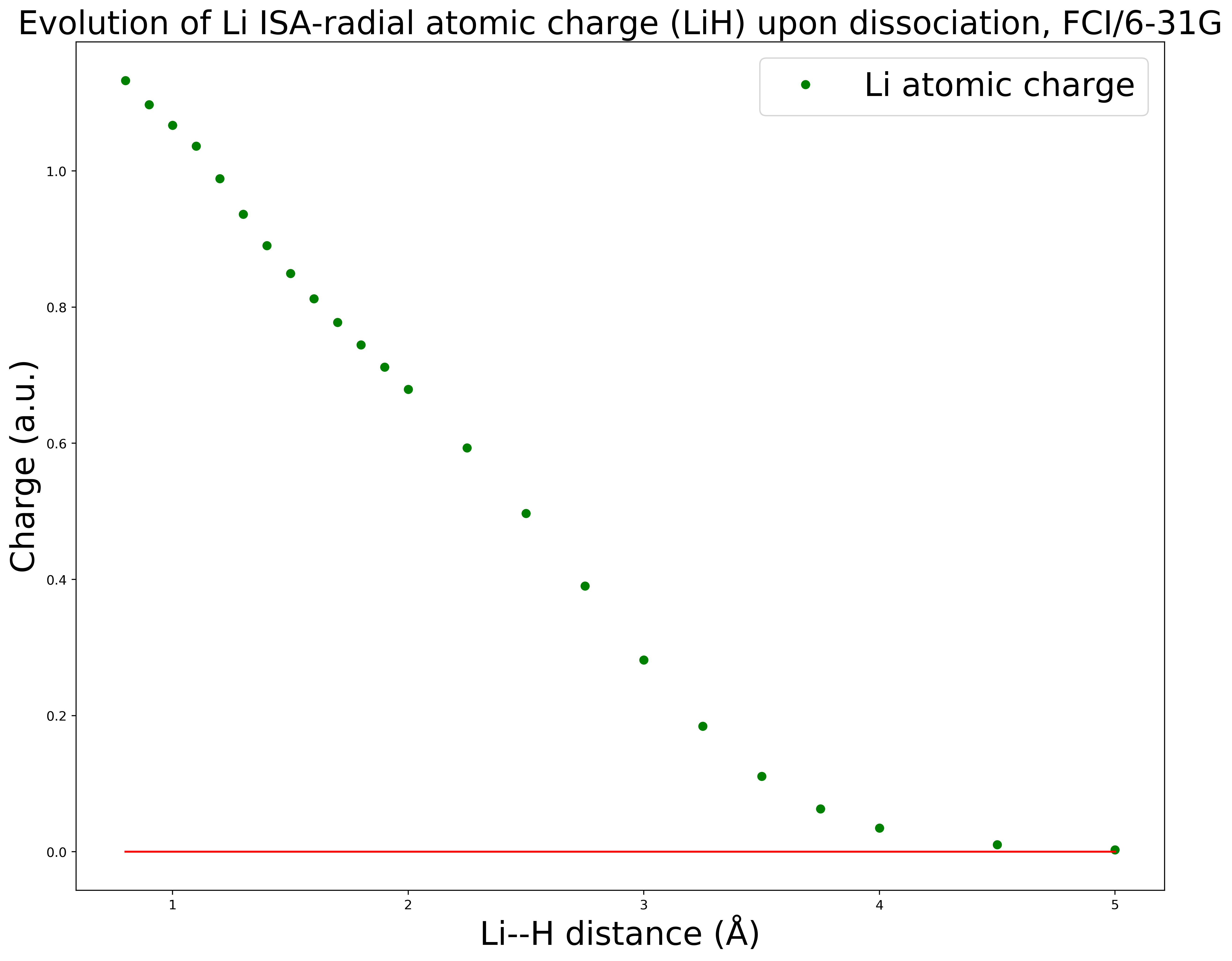}\\
    \includegraphics[width=76mm]{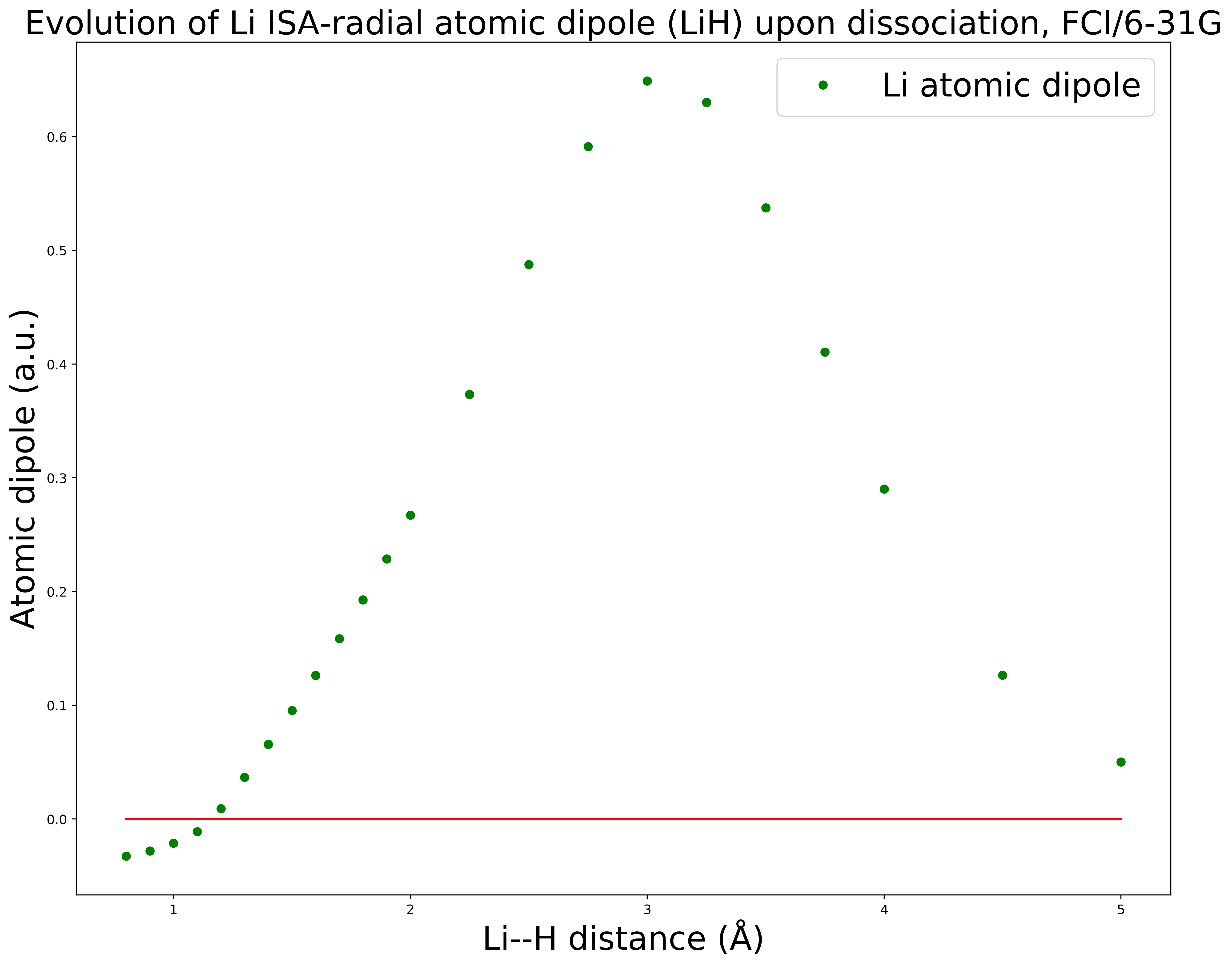}&
    \includegraphics[width=75mm]{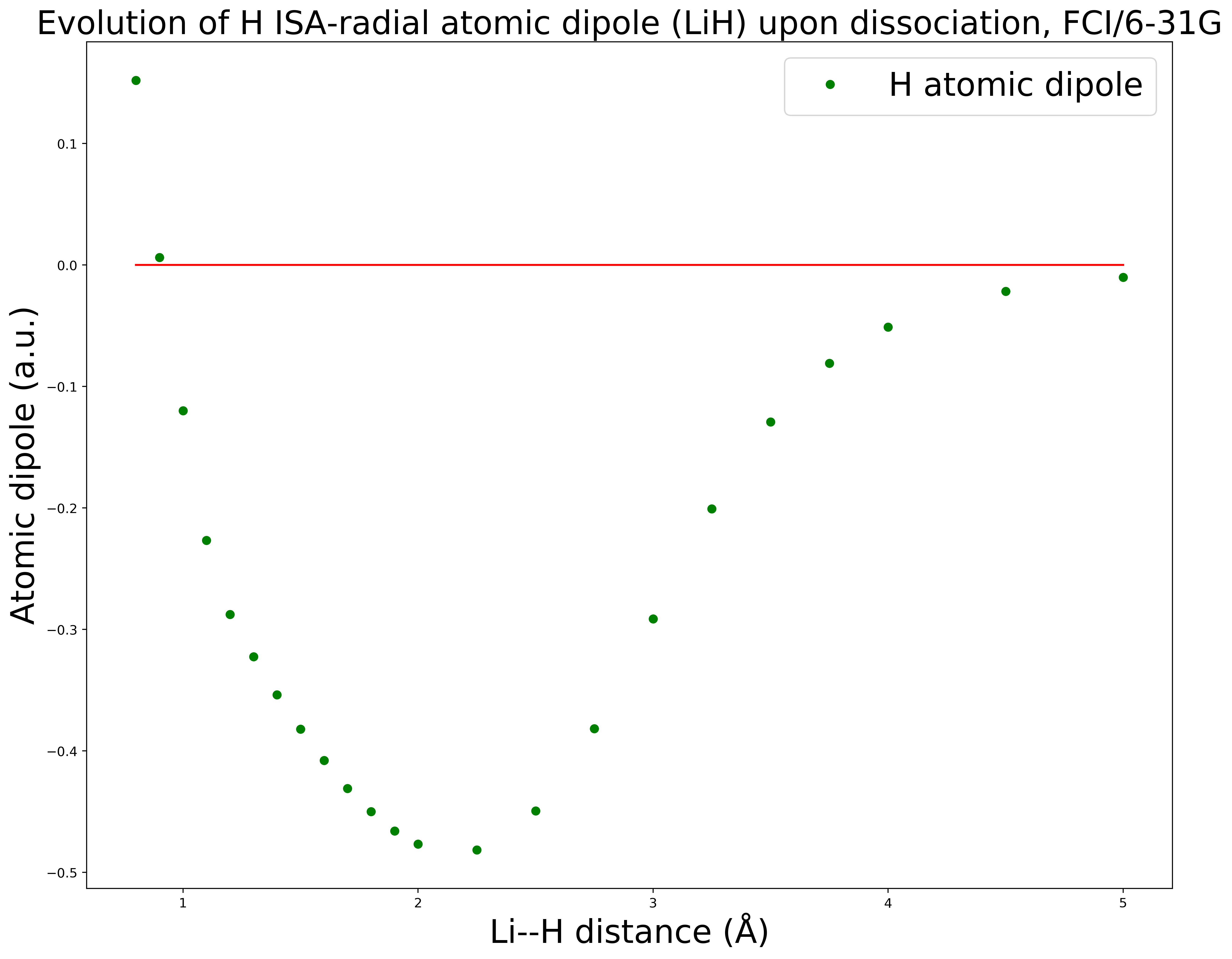}\\
  \end{tabular}
  \caption{Quantities of interest for LiH as functions of the interatomic distance, with a density obtained at the FCI/6-31G level of theory.  Top-left : ground state energy (Ha). Top-right : ISA atomic charge on Li. Bottom: ISA atomic dipoles along the dimer axis on Li (left panel) and H (right panel).}
  \label{fig:LiH_dissociation_curves}
\end{figure}
\FloatBarrier

As expected, the ISA atomic charge tends to zero in the dissociation limit, while the Li--H bond is very polarized in the covalent regime (with atomic charges of order 0.80 a.u. at the equilibrium distance 1.70 \AA). The ISA Li and H atomic dipoles also tend to zero in the dissociation limit, but in a non-monotonic manner, with a minimum (resp. maximum) for H (resp. Li) atomic dipole at about 2.2 \AA{} (resp. 3.0 \AA).

\subsection{Basis set convergence of DMA and ISA}

In this section, we compare some typical features of the DMA and ISA methods, such as the convergence of local multipole moments with increasing basis sets. Figure \ref{fig:evolution_DMA_ISA_monopoles_water_basis_set} displays the evolution of the local atomic charges on H and O atoms of H$_2$O computed with DMA (using both Stone's and Vign\'e-Maeder's redistribution strategies) and ISA (computed with Hipart code, now known as Horton \cite{HORTON}), from a molecular density computed at the DFT/PBE0/(aug)-cc-pVXZ (X $\in \left\lbrace D, T, Q, 5 \right\rbrace$) level of theory \cite{Adamo1999}. ISA atomic charges stabilize up to a few \% for basis sets larger than aug-cc-pVDZ, the DMA atomic charges do not stabilize with increasing basis set size (e.g. four-fold increase of atomic charge on the oxygen atom). This was already known for DMA atomic charges computed with Stone's redistribution strategy and attributed to the presence of diffuse functions in the basis set, and had lead to the hybrid real-space and basis-space GDMA method \cite{stone2005distributed}. We see here that the more balanced Vign\'e-Maeder redistribution strategy \cite{Vigne-Maeder1988} leads to a qualitatively similar non-converging behavior.
A similar behavior is observed for atomic dipoles (Figure \ref{fig:evolution_DMA_ISA_dipoles_water_basis_set}), with ISA dipoles stabilizing up to 10 \% beyond aug-cc-pVDZ basis.\\

\FloatBarrier
\begin{figure}[!htp]
    \centering
    \includegraphics[width=14.5cm]{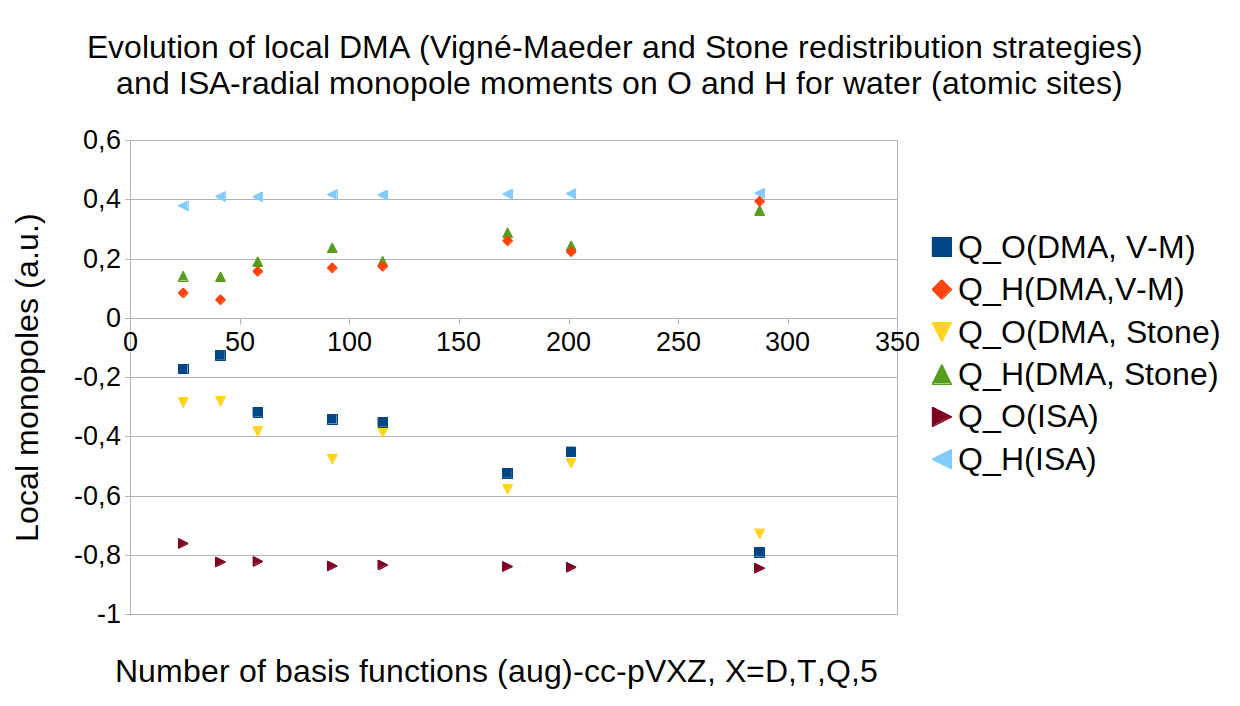}
    \caption{Local atomic charges on hydrogen and oxygen atoms of the water molecule with increasing basis set size, computed from DMA (Stone's and Vign\'e-Maeder's redistribution strategies) and ISA. The number of basis functions refers to the whole molecule (two hydrogen atoms and oxygen atoms).}
    \label{fig:evolution_DMA_ISA_monopoles_water_basis_set}
\end{figure}
\FloatBarrier

\FloatBarrier
\begin{figure}[!htp]
    \centering
    \includegraphics[width=14.5cm]{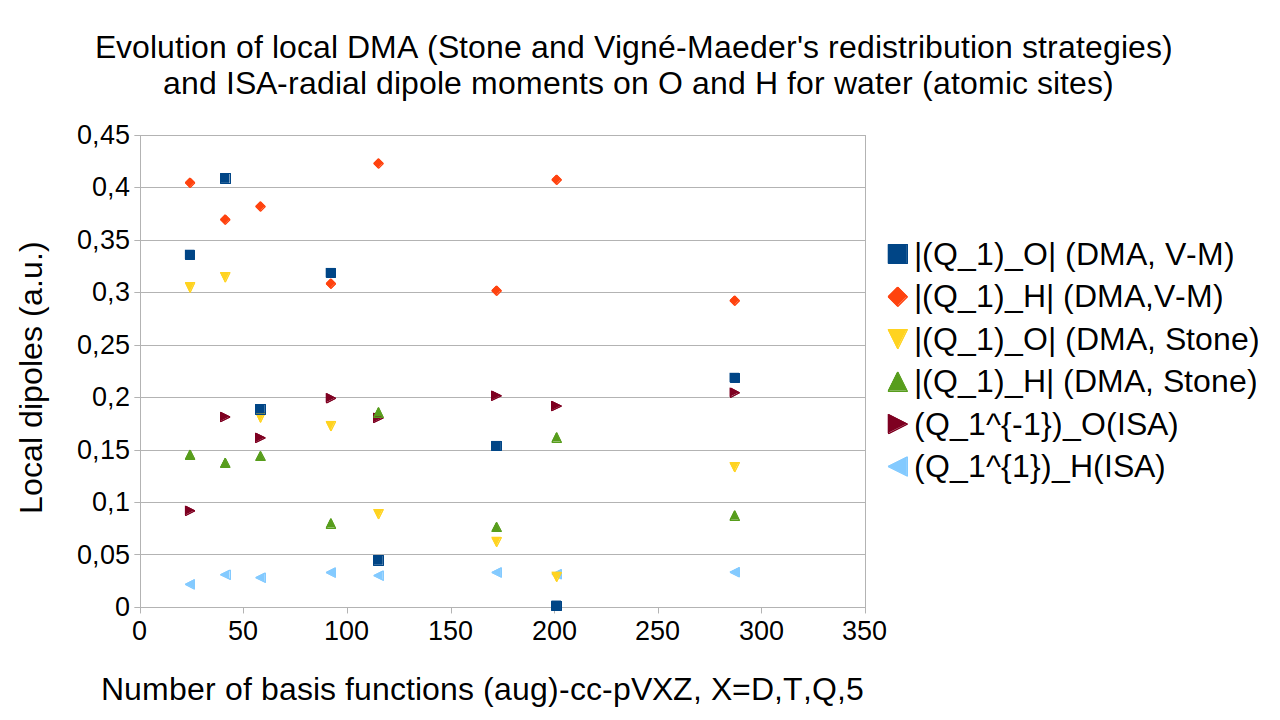}
    \caption{Local atomic dipoles on hydrogen and oxygen atoms of the water molecule with increasing basis set size, computed from DMA (Stone's and Vign\'e-Maeder's redistribution strategies) and ISA. The number of basis functions refers to the whole molecule (two hydrogen atoms and oxygen atoms).}
    \label{fig:evolution_DMA_ISA_dipoles_water_basis_set}
\end{figure}
\FloatBarrier

\section{Proofs}\label{sec:proofs}

Let us first elaborate on the assumptions on $\rho$ and the $\rho^0_a$'s made in the lemmata, propositions and theorems to be proved. As already mentioned previously, electronic densities of molecules and clusters are continuous, positive, functions on $\R^3$, decaying exponentially fast at infinity. We can therefore assume without loss of generality that $\rho \in X_+$, and this implies in particular that $\rho \log\rho \in L^1(\R^3)$. For the same reasons, it makes sense to restrict ourselves to proatom densities $\rho_a^0$ belonging to $X_+$.  In addition, if the $\rho_a$'s are nonnegative measurable functions satisfying $\sum_{a=1}^M \rho_a(\cdot-\bR_a)=\rho$ for some $\rho \in X_+$, then each $\rho_a$ satisfies $0 \le \rho_a \le \rho(\cdot + \bR_a)$, and therefore belongs to $X_+$. This justifies {\it a posteriori} the condition $\bm\rho \in X_+^M$ in the definition \eqref{eq:KrhoR} of $\bcK_{\rho,\bR}$.

\medskip

The following technical remark is important as it allows one to justify several algebraic manipulations in the proofs below.

\begin{remark} \label{rem:integration_entropy} Let $\rho \in X_+$. 
For $\bm\rho=(\rho_a)_{1 \le a \le M} \in \cK_{\rho,\bR}$, we have $\rho_a \in X_+$, and therefore $\rho_a \log \rho_a \in L^1(\R^3)$. As a consequence, we have for all $\bm\rho=(\rho_a)_{1 \le a \le M} \in \cK_{\rho,\bR}$ and $\bm\rho^0=(\rho_a^0)_{1 \le a \le M} \in (X_+^r)^M$,
\begin{align} \label{eq:justif}
S(\bm\rho|\bm\rho^0) &= \sum_{a=1}^M  \int_{\R^3} \rho_a \log \left( \frac{\rho_a}{\rho_a^0} \right)   \nonumber \\
&= \sum_{a=1}^M  \int_{\R^3} \rho_a \left( \log \rho_a - \log \rho_a^0 \right)
\qquad \mbox{\rm (in agreement with \eqref{eq:convention_entropy})}
\nonumber \\
&=
\sum_{a=1}^M  \int_{\R^3} \left( \underbrace{\rho_a \log \rho_a}_{\in L^1(\R^3)} - \underbrace{(\rho_a \log(\rho_a^0))_+}_{\in L^1(\R^3) \cap L^\infty(\R^3)} + \underbrace{(\rho_a \log(\rho_a^0))_-}_{\ge 0}  \right), 
\end{align}
where $x_+=\max(x,0)$ and $x_-=-\min(x,0)$ denote the positive and negative parts of $x \in \R$ ($x_+,x_- \ge 0$ and $x=x_+-x_-$). In addition, the function $(\rho_a \log(\rho_a^0))_+$ has compact support since $\rho_a^0$ vanishes at infinity. It follows that $S(\bm\rho|\bm\rho^0) < + \infty$ if and only if $\rho_a \log(\rho_a^0) \in L^1(\R^3)$ for all $1 \le a \le M$, and that we then have (using Fubini theorem)
\begin{align} \label{eq:rewrite_S}
S(\bm\rho|\bm\rho^0) &=\sum_{a=1}^M  \int_{\R^3} \rho_a \log \rho_a - \sum_{a=1}^M \int_{\R^3} \rho_a \log \rho_a^0 =\sum_{a=1}^M  \int_{\R^3} \rho_a \log \rho_a - \sum_{a=1}^M \int_{\R^3} \langle \rho_a \rangle_s \log \rho_a^0.
\end{align}
\end{remark}

\subsection{Proof of Lemma~\ref{lem:Step1}}

Let us define
\begin{equation}\label{eq:def_rho_a^*}
 \rho_a^*(\bm r) := \left| \begin{array}{ll} \dps
 \frac{\rho_a^0(\bm r)}{\rho^0(\bm r + \bR_a)}\rho(\bm r + \bR_a) & \quad \mbox{if } \rho_a^0(\bm r) > 0, \\
 0 & \quad  \mbox{if } \rho_a^0(\bm r) = 0 \end{array} \right.
 \end{equation}
 (recall that $\rho^0(\br):=\sum_{a=1}^M \rho^0_a(\br-\bR_a)$), and $\bm\rho^*:=(\rho_a^*)_{1\le a \le M}$ as an Ansatz for the minimizer of \eqref{eq:min_on_KrhoR}. Let 
 \begin{align*}
 &B_a^0:=\{\br \in \R^3 \; | \;  \rho_a^0(\bm r) = 0\},  \quad B^0:=\{\br \in \R^3 \; | \;  \rho^0(\bm r) = 0\},  
 \quad B=\{\br \in \R^3 \; | \;  \rho(\bm r) = 0\}.
 \end{align*}
 It follows from the hypothesis \eqref{eq:hyp_G0} (i.e. since $s_{\rm KL}(\rho|\rho^0) < +\infty$) and the conventions \eqref{eq:convention_entropy} that $B^0 \subset B$. 
 In addition, we have $B^0=\bigcap_{a=1}^M (\bR_a+B_a^0)$, so that if $\br \in B^0$, then for all $1 \le a \le M$, $\rho_a^0(\br-\bR_a)=0$, and therefore, using \eqref{eq:def_rho_a^*}, $\rho_a^*(\br-\bR_a)=0$. Thus, on $B^0$, there holds $\sum_{a=1}^M \rho_a^*(\cdot-\bR_a)=0=\rho$ as $B^0 \subset B$. Besides, we have
 $$
 \forall 1 \le a \le M, \quad \forall \br \in \R^3 \setminus B^0, \quad \rho_a^*(\br-\bR_a)  =
 \frac{\rho_a^0(\bm r-\bR_a)}{\rho^0(\bm r)}\rho(\bm r).
 $$ 
 Thus, there holds that $\sum_{a=1}^M \rho_a^*(\,\cdot\,-\bR_a)=\rho$ on $\R^3 \setminus B^0$ as well, and therefore on the entire space~$\R^3$. Since the $\rho_a^*$'s are obviously nonnegative, bounded (by $\rho(\br+\bR_a)$), integrable, with finite first moments, and vanishing at infinity, it holds that $\bm\rho^* \in \cK_{\rho,\bR}$.  Now, we have 
\begin{align*}
S(\bm\rho^*|\bm\rho^0)&=
\sum_{a=1}^M  \int_{\R^3} \rho_a^* \log \left( \frac{\rho_a^*}{\rho_a^0} \right) 
=
\sum_{a=1}^M  \int_{\R^3} \rho_a^*(\,\cdot\,-\bR_a) \log \left( \frac{\rho_a^*(\,\cdot\,-\bR_a)}{\rho_a^0(\,\cdot\,-\bR_a)} \right)  \\
&=\sum_{a=1}^M  \int_{\R^3} \rho_a^*(\,\cdot\,-\bR_a) \log \left( \frac{\rho}{\rho^0} \right) =   
\int_{\R^3} \rho \log \left( \frac{\rho}{\rho^0} \right)  = s_{\rm KL}(\rho|\rho^0) < +\infty.
\end{align*}
All these equalities can be rigorously justified by using arguments similar as in Remark~\ref{rem:integration_entropy} and the infimum in \eqref{eq:min_on_KrhoR} therefore has a finite value. For all $\bm\rho=(\rho_a)_{1 \le a \le M} \in \cK_{\rho,\bR}$ such that $S(\bm\rho|\bm\rho^0) < +\infty$, we have in addition, using the fact that $\rho = \sum_{a=1}^M \rho_a(\,\cdot\, - \bR_a)$,
\begin{align*}
S(\bm\rho^*|\bm\rho^0) & = \int_{\R^3} \rho \log \left( \frac{\rho}{\rho^0} \right)  = \sum_{a=1}^M  \int_{\R^3} \rho_a(\,\cdot\,-\bR_a) \log  \left( \frac{\rho}{\rho^0} \right) 
= \sum_{a=1}^M  \int_{\R^3} \rho_a \log \left( \frac{\rho(\,\cdot+\bR_a)}{\rho^0(\,\cdot+\bR_a)} \right).
\end{align*}
Therefore, using again arguments as in Remark~\ref{rem:integration_entropy} to justify each equality, we get
\begin{align*}
S( \bm \rho| \bm \rho^0) - S( \bm \rho^*| \bm \rho^0)&= \sum_{a=1}^M  \int_{\R^3}  \rho_a \log \left( \frac{\rho_a}{\rho_a^0} \right) -\sum_{a=1}^M  \int_{\R^3} \rho_a \log \left( \frac{\rho(\,\cdot+\bR_a)}{\rho^0(\,\cdot+\bR_a)} \right) \\
&= \sum_{a=1}^M  \int_{\R^3}  \rho_a \log \left( \frac{\rho_a \rho^0(\,\cdot+\bR_a)}{\rho_a^0 \rho(\,\cdot+\bR_a)} \right) =
\sum_{a=1}^M  \int_{\R^3}  \rho_a \log \left( \frac{\rho_a}{\rho_a^*} \right) = S(\bm\rho|\bm\rho^*) \ge 0,
\end{align*}
for all $\bm\rho \in \bcK_{\rho, \bR}$, which proves that $\rm\rho^*$ is a minimizer of \eqref{eq:min_on_KrhoR}. As this problem is strictly convex, it is the unique one.

\subsection{Proof of Lemma~\ref{lem:Step2_ISA}}

Obviously, $\langle \rho_a\rangle_s \in \cK^0_{\rm ISA}=X^r_+$ and $\int_{\R^3}  \langle \rho_a\rangle_s  = \int_{\R^3} \rho_a$, so that $\langle \rho_a\rangle_s$ is in the minimization set of \eqref{eq:pb_Lemma_1}. Reasoning as in Remark~\ref{rem:integration_entropy}, we obtain that 
$$
s_{\rm KL}(\rho_a|\langle \rho_a\rangle_s) =  \int_{\R^3} \rho_a \log(\rho_a) -    \int_{\R^3} \langle \rho_a \rangle_s  \log(\langle \rho_a \rangle_s) < +\infty,
$$
and that, for all $\rho^0_a \in \cK^0_{\rm ISA}$ such that $s_{\rm KL}(\rho_a|\rho_a^0) < + \infty$, it holds
\begin{align*}
s_{\rm KL}(\rho_a| \rho_a^0) =  \int_{\R^3} \rho_a \log(\rho_a) -    \int_{\R^3}  \langle\rho_a\rangle_s  \log(\rho_a^0).
\end{align*}
Therefore, for all $\rho^0_a \in \cK^0_{\rm ISA}$ such that $s_{\rm KL}(\rho_a|\rho_a^0) < + \infty$, we have
\begin{align*}
s_{\rm KL}(\rho_a| \rho_a^0) &= s_{\rm KL}(\rho_a|\langle \rho_a\rangle_s) +   \int_{\R^3} \langle \rho_a \rangle_s  \log(\langle \rho_a \rangle_s)-    \int_{\R^3}  \langle\rho_a\rangle_s  \log(\rho_a^0) \\
&= s_{\rm KL}(\rho_a|\langle \rho_a\rangle_s) + s_{\rm KL}(\langle \rho_a\rangle_s|\rho_a^0) \ge s_{\rm KL}(\rho_a|\langle \rho_a\rangle_s).
\end{align*}
This implies that $\langle \rho_a\rangle_s$ is a minimizer of~\eqref{eq:pb_Lemma_1} and the infimum is finite.
The set $\cK^0_{\rm ISA}$ is a non-empty closed convex subset of the vector space $X$, and the map ${\cal J}_0:=\cK^0_{\rm ISA} \ni  \rho_a^0 \mapsto s_{\rm KL}(\rho_a | \rho_a^0) \in \R \cup \{+\infty\}$ is convex, and strictly convex on the convex set on which it takes finite values. The uniqueness of the solution to~\eqref{eq:pb_Lemma_1} is a direct consequence of this strict convexity property.

\subsection{L-ISA counterpart of Lemma~\ref{lem:Step2_ISA}}
\label{ssec:L-ISA-Lemma}

For $\bm \lambda:=(\lambda_a)_{1\leq a \leq M}\in \mathbb{R}^M$, $\bm \mu:=(\mu_a)_{1\leq a \leq M} \in \mathbb{R}^M$, and $\bm \rho:=(\rho_a)_{1\leq a \leq M} \in X^M$, we denote by $\bm \lambda \odot \bm \mu \in \mathbb{R}^M$ and $\bm \lambda \odot \bm \rho \in X^M$ the Kronecker products defined by 
$$
[\bm \lambda \odot \bm \mu]_a = \lambda_a \, \mu_a \quad \mbox{and} \quad 
[\bm \lambda \odot \bm \rho]_a = \lambda_a \, \rho_a.
$$

\begin{lemma}\label{lem:prelimG} Let $\bm\rho = (\rho_a)_{1 \le a \in M} \in X_+^M$ such that for all $1 \le a \le M$, there exists $\rho_a^0 \in \cK^0_{z,\rm L-ISA}$ such that $\int_{\R^3} \rho_a^0=\int_{\R^3} \rho_a$ and $s_{\rm KL}(\rho_a|\rho_a^0) < +\infty$. Then, the problem
\begin{equation}\label{eq:pb_Lemma_1bis}
\mathop{\inf}_{\bm \rho^0\in \bcK^0_{\rm L-ISA}, \; \bcN(\bm \rho^0) = \bcN(\bm \rho)} S(\bm \rho |\bm \rho^0)
\end{equation}
has a unique minimizer $\bm G^0_{\bcK^0_{\rm L-ISA}}(\bm \rho)$. For all $\bm \lambda \in (\mathbb{R}_+)^M$, it holds
\begin{equation}\label{eq:rescaled}
\bm G^0_{\bcK^0_{\rm L-ISA}}(\bm\lambda \odot \bm \rho) = \bm\lambda \odot \bm G^0_{\bcK^0_{\rm L-ISA}}(\bm \rho).
\end{equation}
\end{lemma}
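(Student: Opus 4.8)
The plan is to exploit the complete decoupling of problem~\eqref{eq:pb_Lemma_1bis}. Since $S(\bm\rho|\bm\rho^0)=\sum_{a=1}^M s_{\rm KL}(\rho_a|\rho_a^0)$, since $\bcK^0_{\rm L-ISA}=\prod_{a=1}^M\cK^0_{z_a,\rm L-ISA}$, and since the constraint $\bcN(\bm\rho^0)=\bcN(\bm\rho)$ reads componentwise $\int_{\R^3}\rho_a^0=\int_{\R^3}\rho_a=:N_a$, the minimization~\eqref{eq:pb_Lemma_1bis} splits into the $M$ independent problems
\[
\inf_{\rho_a^0\in\cK^0_{z_a,\rm L-ISA},\ \int_{\R^3}\rho_a^0=N_a} s_{\rm KL}(\rho_a|\rho_a^0),\qquad 1\le a\le M,
\]
and a tuple solves~\eqref{eq:pb_Lemma_1bis} if and only if each of its components solves the corresponding problem above. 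I would treat each $a$ separately. When $N_a=0$, one has $\rho_a=0$ a.e.\ and, the $g_{z_a,k}$ being $L^1$-normalized with nonnegative coefficients, the only admissible $\rho_a^0$ is $0$, so the problem is trivial; assume henceforth $N_a>0$. Parametrizing $\rho_a^0=\sum_{k=1}^{m_{z_a}}c_{a,k}\,g_{z_a,k}$, the admissible set becomes the compact simplex $\Delta_a:=\{\bc_a\in\R_+^{m_{z_a}}:\sum_k c_{a,k}=N_a\}$, and by Remark~\ref{rem:integration_entropy} the reduced objective is $\varphi_a(\bc_a):=\int_{\R^3}\rho_a\log\rho_a-\int_{\R^3}\rho_a(\br)\log\!\big(\sum_k c_{a,k}\,g_{z_a,k}(\br)\big)\,d\br$, the additive constant $\int_{\R^3}\rho_a\log\rho_a$ being finite since $\rho_a\in X_+$.

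I would then check that $\varphi_a:\Delta_a\to\R\cup\{+\infty\}$ is proper, convex and lower semicontinuous. Properness: the Gibbs-type inequality $s_{\rm KL}(f|g)\ge\int_{\R^3}(f-g)$ gives $\varphi_a\ge N_a-N_a=0$ on $\Delta_a$, so $\varphi_a$ never takes the value $-\infty$, and the standing hypothesis of the lemma furnishes a point of $\Delta_a$ at which $\varphi_a$ is finite. Convexity is immediate since $\bc_a\mapsto\sum_k c_{a,k}\,g_{z_a,k}(\br)$ is affine and $-\log$ is convex. For lower semicontinuity I would use that on the bounded set $\Delta_a$ one has $\sum_k c_{a,k}\,g_{z_a,k}(\br)\le N_a\max_k\|g_{z_a,k}\|_{L^\infty}=:C_a<\infty$, hence $-\rho_a(\br)\log\!\big(\sum_k c_{a,k}\,g_{z_a,k}(\br)\big)\ge-(\log C_a)\,\rho_a(\br)$, a fixed $L^1$ lower bound independent of $\bc_a$; since $\bc_a\mapsto\log\!\big(\sum_k c_{a,k}\,g_{z_a,k}(\br)\big)$ is continuous for a.e.\ $\br$ (the $g_{z_a,k}$ being positive a.e.\ and $\bc_a\neq 0$ on $\Delta_a$), Fatou's lemma yields $\varphi_a(\bc_a)\le\liminf_n\varphi_a(\bc_a^{(n)})$ along any sequence $\bc_a^{(n)}\to\bc_a$ in $\Delta_a$. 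Since $\Delta_a$ is compact, $\varphi_a$ attains a finite minimum. I expect this lower-semicontinuity step, and in particular the need to pass through the uniform integrable lower bound furnished by $C_a$, to be the main technical point of the proof.

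Uniqueness I would obtain exactly as in the proof of Lemma~\ref{lem:Step2_ISA}: the set $\{\varphi_a<+\infty\}\cap\Delta_a$ is convex and $\varphi_a$ is strictly convex on it, because the strict concavity of the logarithm together with the linear independence of the functions $g_{z_a,k}$ forces the convexity inequality defining $\varphi_a$ to be strict at distinct coefficient vectors. Hence the minimizer $\bc_a^{\rm opt}$, and so $\rho_a^{0,\rm opt}:=\sum_k c_{a,k}^{\rm opt}\,g_{z_a,k}$, is unique, and collecting components defines $\bm G^0_{\bcK^0_{\rm L-ISA}}(\bm\rho)$ and shows the infimum in~\eqref{eq:pb_Lemma_1bis} is finite and attained.

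For the homogeneity identity~\eqref{eq:rescaled} I would argue componentwise as well, the cases $\lambda_a=0$ and $N_a=0$ being immediate. Fix $a$ with $\lambda_a>0$ and $N_a>0$: the map $\rho_a^0\mapsto\lambda_a\rho_a^0$ is a bijection of the cone $\cK^0_{z_a,\rm L-ISA}$ onto itself sending $\{\int_{\R^3}\rho_a^0=N_a\}$ onto $\{\int_{\R^3}\rho_a^0=\lambda_a N_a\}$, and a direct computation gives $s_{\rm KL}(\lambda_a\rho_a\,|\,\lambda_a\rho_a^0)=\lambda_a\,s_{\rm KL}(\rho_a\,|\,\rho_a^0)$. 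Consequently the admissibility hypothesis for $\lambda_a\rho_a$ follows from that for $\rho_a$ by scaling, and minimizing $s_{\rm KL}(\lambda_a\rho_a\,|\,\cdot)$ under $\int_{\R^3}\cdot=\lambda_a N_a$ is equivalent to minimizing $s_{\rm KL}(\rho_a\,|\,\cdot)$ under $\int_{\R^3}\cdot=N_a$; by the uniqueness just established, $[\bm G^0_{\bcK^0_{\rm L-ISA}}(\bm\lambda\odot\bm\rho)]_a=\lambda_a\,[\bm G^0_{\bcK^0_{\rm L-ISA}}(\bm\rho)]_a$ for every $a$, which is~\eqref{eq:rescaled}.
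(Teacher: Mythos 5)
Your proof is correct and follows essentially the same route as the paper: reduction to $M$ independent finite-dimensional problems over compact simplices, existence via lower semicontinuity of the Kullback--Leibler functional, uniqueness via strict convexity, and the scaling identity via the homogeneity $s_{\rm KL}(\lambda f|\lambda g)=\lambda\, s_{\rm KL}(f|g)$ together with the stability of the cone $\cK^0_{z_a,\rm L-ISA}$ under nonnegative dilations. The only substantive difference is that you spell out the lower-semicontinuity step (the uniform $L^1$ lower bound on the simplex and Fatou's lemma), which the paper merely asserts by reference to the functional $\cJ_0$.
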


Let us point out that problem (\ref{eq:pb_Lemma_1bis}) can be decomposed as $M$ independent problems of the form
$$
\mathop{\inf}_{\rho_a^0\in \cK^0_{z_a,\rm L-ISA}, \; \int_{\mathbb{R}^3}\rho_a^0 = \int_{\mathbb{R}^3}\rho_a} s_{\rm KL}(\rho_a |\rho_a^0)
$$
for all $1\leq a\leq M$ as in (\ref{eq:comp_G}).

\begin{proof} The set $\{\bm \rho^0\in \bcK^0_{\rm L-ISA} \, | \, \bcN(\bm \rho^0) = \bcN(\bm \rho)\}$ is non-empty and convex, and is included in a finite-dimensional vector subspace of $X$.  The existence of a solution~\eqref{eq:pb_Lemma_1bis} follows from the compactness of $\bcK^0_{\rm ISA}$ (for the strong topology of $X^M$) and the strong lower-semicontinuity of the functional ${\cal J}_0$ introduced in the previous section. The second assertion of the lemma is a consequence of the fact that (i) for all $\bm \lambda \in \mathbb{R_+}^M$, $\bm \rho \in X^M$, and $\bm \rho^0 \in \cK^0_{\rm L-ISA}$, $\bm\lambda \odot \bm \rho^0 \in \cK^0_{\rm L-ISA}$, $\bcN(\bm\lambda \odot \bm \rho^0) = \bcN(\bm\lambda \odot \bm \rho)$ and (ii) $s_{\rm KL}(\lambda f|\lambda g)=\lambda s_{\rm KL}(f|g)$ for all $\lambda \in \R_+$ and $f,g \in X_+$.
\end{proof}

\subsection{Topological properties of the set of admissible AIM decompositions}
\label{ssec:topology}

The following lemma provides some information about the topology of $\bcK_{\rho,\bR}$ which will be useful for our analysis. Recall that $\cM_{\rm b}(\R^3)$ is the Banach space of the bounded (signed) Radon measures on~$\R^3$ endowed with the total variation norm and that $\cM_{\rm b}(\R^3)$ is the dual of the Banach space $C_0(\R^3)$ of the real-valued continuous functions going to zero at infinity, and that a sequence $(\mu_n)_{n \in \N}$ of elements of $\cM_{\rm b}(\R^3)$ weakly-* converge to some $\mu \in \cM_{\rm b}(\R^3)$ if and only if 
 $$
 \forall f \in C_0(\R^3), \quad \int_{\R^3} f \, d\mu_n \mathop{\longrightarrow}_{n \to +\infty} \int_{\R^3} f \, d\mu.
 $$

 \begin{lemma} \label{lem:topology_KrhoR} 
 Let $\rho \in X_+$ and $\bR=(\bR_1,\ldots,\bR_M) \in \R^{3M}$. The set $\bcK_{\rho,\bR}$ of admissible AIM decompositions  defined by~\eqref{eq:KrhoR}  is nonempty and convex. It is also bounded and closed for
the norm topology of the Banach space $X^M$, and sequentially compact in the following sense: from any sequence $(\bm\rho^n)_{n \in \N}=((\rho_a^n)_{1\le a \le M})_{n \in \N}$ of elements of $\bcK_{\rho,\bR}$, one can extract a subsequence $(\bm\rho^{n_k})_{k \in \N}$ converging toward an element $\bm\rho^*$ of $\bcK_{\rho,\bR}$ for the weak-* topologies of $\cM_{\rm b}(\R^3)^M$ and $L^\infty(\R^3)^M$, and for the weak topology of $L^p(\R^3)^M$ for any $1 < p < \infty$, and such that
\begin{align}\label{eq:masslimit}
\forall 1 \le a \le M, \quad \int_{\R^3} \rho_a^{n_k}(\br) \, d\br & \mathop{\longrightarrow}_{k \to \infty} \int_{\R^3} \rho_a^*(\br) \, d\br, \\
 \int_{\R^3} |\br| \rho_a^{n_k}(\br) \, d\br & \mathop{\longrightarrow}_{k \to \infty} \int_{\R^3} |\br| \rho_a^*(\br) \, d\br. \label{eq:momentlimit}
\end{align}
\end{lemma}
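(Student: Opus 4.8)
The non-emptiness and convexity are immediate. The decomposition $\rho_a := M^{-1}\rho(\,\cdot + \bR_a)$ lies in $X_+^M$ (translations and positive dilations preserve every defining property of $X_+$) and satisfies $\sum_{a} \rho_a(\,\cdot - \bR_a) = \rho$, so $\bcK_{\rho,\bR}\neq\emptyset$; convexity is clear since the constraint in \eqref{eq:KrhoR} is affine and $X_+^M$ is convex. The crucial elementary observation is the pointwise domination: for $\bm\rho = (\rho_a)_a \in \bcK_{\rho,\bR}$ and any $a$, evaluating $\sum_b \rho_b(\,\cdot - \bR_b) = \rho$ at $\br + \bR_a$ gives
\[
0 \le \rho_a(\br) \le \sum_{b=1}^M \rho_b(\br + \bR_a - \bR_b) = \rho(\br + \bR_a)\qquad\text{a.e.}
\]
Hence $\|\rho_a\|_{L^\infty}\le\|\rho\|_{L^\infty}$, $\|\rho_a\|_{L^1}\le N$, and $\int_{\R^3}|\br|\rho_a(\br)\,d\br \le \int_{\R^3}|\br-\bR_a|\rho(\br)\,d\br<\infty$, which shows that $\bcK_{\rho,\bR}$ is bounded in $X^M$. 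It is closed for the norm topology because norm convergence in $X^M$ implies $L^1$-convergence of every component, which passes the affine constraint to the limit, together with $L^\infty$-convergence, which preserves both non-negativity and vanishing at infinity.

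\textbf{Sequential compactness.} Let $(\bm\rho^n)_{n}$ be a sequence in $\bcK_{\rho,\bR}$. By the bounds above it is bounded in $(L^1\cap L^\infty)(\R^3)^M$, hence in $L^p(\R^3)^M$ for every $p\in[1,\infty]$ by interpolation, and, each $\rho_a^n$ being non-negative, also bounded in $\cM_{\rm b}(\R^3)^M$. Invoking Banach--Alaoglu for $\cM_{\rm b}(\R^3)=C_0(\R^3)^*$ and $L^\infty(\R^3)=L^1(\R^3)^*$, reflexivity of $L^2$, and a diagonal extraction over the $M$ components, we obtain a subsequence $(\bm\rho^{n_k})_k$ along which each component converges weakly-$*$ in $\cM_{\rm b}(\R^3)$, weakly-$*$ in $L^\infty(\R^3)$, and weakly in $L^2(\R^3)$. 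Testing against $\phi\in C_c(\R^3)$ --- which lies in $C_0(\R^3)$ and is dense both in $L^1$ and in every $L^{p'}$ --- identifies all these limits with a single function $\rho_a^*\in(L^1\cap L^\infty)(\R^3)$; since $(\rho_a^{n_k})_k$ is bounded in $L^p$ for all $p\in(1,\infty)$ and every weak-$L^p$ cluster point must equal $\rho_a^*$, it in fact converges weakly in $L^p$ for each $1<p<\infty$. Passing $0\le\rho_a^{n_k}\le\rho(\,\cdot+\bR_a)$ to the limit against non-negative $L^1$ test functions yields $0\le\rho_a^*\le\rho(\,\cdot+\bR_a)$ a.e., whence $\rho_a^*\in X_+$ (it inherits boundedness, integrability, decay at infinity and finiteness of the first moment from the dominating function $\rho(\,\cdot+\bR_a)\in X_+$). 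Finally, testing the constraint against $\phi\in C_c(\R^3)$ and using $\int_{\R^3}\phi\,\rho_a^{n_k}(\,\cdot-\bR_a) = \int_{\R^3}\phi(\,\cdot+\bR_a)\,\rho_a^{n_k}$ gives $\sum_a\rho_a^*(\,\cdot-\bR_a)=\rho$, so $\bm\rho^*\in\bcK_{\rho,\bR}$.

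\textbf{Convergence of masses and first moments.} Establishing \eqref{eq:masslimit} and \eqref{eq:momentlimit} is the one genuinely delicate point, since weak-$*$ convergence in $\cM_{\rm b}(\R^3)$ --- being duality against $C_0(\R^3)$ --- is blind to mass leaking to infinity, so an equi-tightness input is required. This is exactly what the uniform domination provides at no cost: for every $R>0$,
\[
\int_{|\br|>R}\rho_a^{n_k}(\br)\,d\br \le \int_{|\br|>R}\rho(\br+\bR_a)\,d\br,\qquad
\int_{|\br|>R}|\br|\,\rho_a^{n_k}(\br)\,d\br \le \int_{|\br|>R}|\br|\,\rho(\br+\bR_a)\,d\br,
\]
and both majorants, being tails of fixed $L^1$ functions, go to $0$ as $R\to\infty$ uniformly in $k$; the same tails bound the corresponding integrals of $\rho_a^*$. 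On the ball $B_R$, the functions $\mathbf 1_{B_R}$ and $|\br|\,\mathbf 1_{B_R}$ belong to $L^1\cap L^{p'}$, so weak-$*$ $L^\infty$ (or weak $L^p$) convergence gives $\int_{B_R}\rho_a^{n_k}\to\int_{B_R}\rho_a^*$ and $\int_{B_R}|\br|\rho_a^{n_k}\to\int_{B_R}|\br|\rho_a^*$. An $\varepsilon/3$ argument combining these two facts then yields \eqref{eq:masslimit}--\eqref{eq:momentlimit}, completing the proof. The main obstacle, as noted, is precisely this last step: everything hinges on turning the domination $\rho_a^n\le\rho(\,\cdot+\bR_a)$ into equi-integrability of $1$ and of $|\br|$ against the sequence.
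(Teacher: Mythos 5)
Your proof is correct, and its skeleton is the same as the paper's: the pointwise domination $0\le\rho_a\le\rho(\,\cdot+\bR_a)$ gives the uniform $L^1\cap L^\infty$ and first-moment bounds, weak-$*$ compactness in $\cM_{\rm b}(\R^3)$ and $L^\infty(\R^3)$ plus boundedness in $L^p$ yields a limit $\bm\rho^*$, and testing against $C_c(\R^3)$ identifies the limits and passes the constraint to the limit, so that $\bm\rho^*\in\bcK_{\rho,\bR}$. The only point where you genuinely diverge is the proof of \eqref{eq:masslimit}--\eqref{eq:momentlimit}: the paper exploits the structure of the constraint, noting that weak-$*$ convergence of nonnegative measures gives $N_a^*\le\liminf_n N_a^n$ for each $a$ while $\sum_a N_a^n=\int_{\R^3}\rho=\sum_a N_a^*$ is conserved, which forces equality component by component (and likewise for the first moments); you instead extract equi-tightness directly from the domination $\rho_a^n\le\rho(\,\cdot+\bR_a)$, whose tails (and weighted tails) vanish uniformly, and combine this with convergence on balls in an $\varepsilon/3$ argument. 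Both are valid; the paper's argument is shorter but relies on the exact mass balance of the decomposition, whereas yours is more robust in that it would survive without that conservation identity and makes the absence of mass leakage explicit.
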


\begin{proof}
Let $(\bm\rho^n)_{n \in \N}$ denote a sequence of elements of $\bcK_{\rho,\bR}$ so that for all $n\in \mathbb{N}$, $\bm\rho^n:=\left(\rho_a^n\right)_{n\in\mathbb{N}}$.
Since for all $n\in \mathbb{N}$ and $1\leq a \leq M$, $\rho_a^n \geq 0$ and $\rho \geq 0$, the fact that
 $
 \displaystyle \sum_{a=1}^M \rho_a^n(\,\cdot\,-\bR_a)= \rho
 $
 implies that 
 $
 \|\rho_a^n\|_{L^1(\mathbb{R}^3)} \leq \left\| \rho \right\|_{L^1(\mathbb{R}^3)}
 $
 and
 $
 \left\| \rho_a^n \right\|_{L^\infty(\mathbb{R}^3)} \leq  \left\| \rho \right\|_{L^\infty(\mathbb{R}^3)}$. 
 Thus, up to the extraction of a subsequence, still denoted by $(\rho_a^n)_{n\in \mathbb{N}}$ for the sake of simplicity, there exists a bounded Radon measure $\rho_a^*$ such that 
 $$
 \rho_a^n \mathop{\rightharpoonup}_{n\to \infty} \rho_a^* \mbox{ weakly-* in } \cM_{\rm b}(\R^3).
 $$
 In particular, $\rho_a^* \ge 0$ and $\displaystyle \sum_{a=1}^N \rho_a^*(\cdot-\bR_a) = \rho$. Since $(1+|\cdot|) \rho \in L^1(\mathbb{R}^3)$, $\rho_a^*$ is necessarily absolutely continuous with respect to the Lebesgue measure and hence $(1+|\cdot|) \rho_a^* \in L^1(\mathbb{R}^3)$. 
 Besides, $(\rho_a^n)_{n \in \N}$ being bounded in every $L^p(\R^3)$ for $1 < p \le \infty$, it holds that, up to the extraction of a subsequence,
 $$
 \rho_a^n \mathop{\rightharpoonup}_{n\to +\infty} \rho_a^* \mbox{ weakly-* in }L^\infty(\mathbb{R}^3) \mbox{ and weakly in }L^p(\mathbb{R}^3),
 \; \mbox{ for all } 1 < p < \infty.
 $$
 Thus $\rho^*=(\rho_a^*)_{1 \le a \le M} \in \bcK_{\rho,\bR}$, and the sequential compactness properties listed in the lemma hold true. Moreover, let us denote by $N_a^n:= \int_{\mathbb{R}^3} \rho_a^n$ and by $N_a^*:= \int_{\mathbb{R}^3} \rho_a^*$. It holds that
 $
 \displaystyle N_a^* \leq \mathop{\liminf}_{n\to +\infty} N_a^n$.
The fact that for all $n\in \mathbb{N}^*$,
 $
 \sum_{a=1}^M N_a^n = \int_{\mathbb{R}^3} \rho =\sum_{a=1}^M N_a^*
 $
implies~\eqref{eq:masslimit}. The convergence \eqref{eq:momentlimit} is obtained in the same way.
 \end{proof}

\subsection{Proof of Theorem~\ref{thm:WPcontPb}}\label{ssec:WPcontPb}

Using the arguments in Remark~\ref{rem:assumptions_ISA} for ISA, and assumption~\eqref{eq:cond_gzak} for L-ISA, we see that the infimum in \eqref{eq:ISAopt} is finite. Let $\left( \bm\rho^n, \bm \rho^{0,n}\right)_{n\in \mathbb{N}}$ be a minimizing sequence for (\ref{eq:ISAopt}). 
 By construction, the sequence $\left( \bm\rho^n,  \bm G^0_{\bcK^0}(\bm \rho^n)\right)_{n\in \mathbb{N}}$, where 
 $$
 \bm G^0_{\bcK^0}(\bm \rho^n) := \argmin_{\widetilde{\bm\rho}^0 \in \bcK^0, \, \bcN(\widetilde{\bm\rho}^0)=\bcN(\bm\rho^n)} S(\bm\rho^n|\widetilde{\bm\rho}^0)
 $$ 
 (we use the notation $\widetilde{\bm\rho}^0$ to avoid a conflict of notation with the first term of the sequence $\left( \bm\rho^n\right)_{n\in \mathbb{N}}$),
 is uniquely defined for ISA and L-ISA by virtue of Lemmata~\ref{lem:Step2_ISA} and~\ref{lem:prelimG}, is also a minimizing sequence to (\ref{eq:ISAopt}). We can therefore assume without loss of generality that $\bm\rho^{0,n}=\bm G^0_{\bcK^0}(\bm \rho^n)$.
 
 \medskip
 
 We infer from Lemma~\ref{lem:topology_KrhoR} that there exists $\bm\rho^* \in\bcK_{\rho,\bR}$ such that, up to extraction of a subsequence, $(\bm\rho^n)_{n \in \N}$ converges to $\bm\rho^*$ for the weak-* topologies of $\cM_{\rm b}(\R^3)^M$ and $L^\infty(\R^3)^M$, and for the weak topology of $L^p(\R^3)^M$ for any $1 < p < \infty$, and 
$$
\forall 1 \le a \le M, \quad N_a^n:=\int_{\R^3} \rho_a^{n}(\br) \, dr \mathop{\longrightarrow}_{k \to \infty} \int_{\R^3} \rho_a^*(\br) \, dr =:N_a^*.
$$

\noindent\boxed{1}
We want to show that $\bm G^0_{\bcK^0}(\bm\rho^n) \rightharpoonup \bm G^0_{\bcK^0}(\bm\rho^\ast)$ for the weak-* topology of $\cM_{\rm b}(\R^3)^M$.

\medskip

\noindent\circled{i}
Let us first deal with ISA. We have $\rho_a^{0,n}:=(\bm G^0_{\bcK^0_{\rm ISA}}(\bm\rho^n))_a
= \langle\rho_a^n\rangle_s
$ (spherical average). We therefore have for all $1 \le a \le M$ and all $n \in \N$, $0 \le \rho_a^{0,n} \le \langle \rho (\,\cdot\,+\bR_a)\rangle_s \in X_+$ so that $\rho_a^{0,n} \in X_+$. Reasoning as in the proof of Lemma~\ref{lem:topology_KrhoR}, we obtain the existence of a function $\rho_a^{0,*} \in X_+$ such that
 $$
 \rho_a^{0,n} \mathop{\rightharpoonup}_{n\to +\infty} \rho_a^{0,*}, \mbox{ weakly-* in } \cM_{\rm b}(\R^3) \mbox{ and } L^\infty(\mathbb{R}^3), \mbox{ and weakly in }L^p(\mathbb{R}^3),
 $$
 for all $1 < p < \infty$.
The function $\bm G^0_{\bcK^0_{\rm ISA}}$ is linear and strongly continuous on e.g. $L^2(\R^3)^M$, hence weakly continuous on this space. We therefore have $\bm\rho^{0,n}=\bm G^0_{\bcK^0_{\rm ISA}}(\bm\rho^n) \rightharpoonup \bm G^0_{\bcK^0_{\rm ISA}}(\bm\rho^\ast)$ weakly in $L^2(\R^3)^M$.
Since $(\bm\rho^{0,n})_{n \in \N}$ also converges to $\bm\rho^{0,*}$ weakly in $L^2(\R^3)^M$, we obtain that $\bm\rho^{0,*}=G^0_{\bcK^0_{\rm ISA}}(\bm\rho^\ast)$. Thus, $\bm G^0_{\bcK^0_{\rm ISA}}(\bm\rho^n) \rightharpoonup \bm G^0_{\bcK^0_{\rm ISA}}(\bm\rho^\ast)$ for the weak-* topology of $\cM_{\rm b}(\R^3)^M$.

 \medskip
 
 \noindent\circled{ii}
 Let us now turn to L-ISA. The above argument for ISA heavily relies  on the fact that $\bm \rho \mapsto \bm G^0_{\cK^0_{\rm ISA}}(\bm \rho)$ is a linear map, which is not the case of the map $\bm\rho \mapsto \bm G^0_{\cK^0_{\rm L-ISA}}(\bm \rho)$. 

We denote by $\overline{\cK}^0_{z_a}:= \left\{ \overline\rho_a^0 \in \cK^0_{z_a,\rm L-ISA} \; | \; \int_{\mathbb{R}^3} \overline\rho_a^0 = 1\right\}$ and $F_a^n: \overline{\cK}^0_{z_a} \ni \overline\rho_a^0 \mapsto - \int_{\mathbb{R}^3} \rho_a^n \log \overline\rho_a^0$. Since $\overline{\cK}^0_{z_a}$ is a closed finite-dimensional simplex of the vector space $(X^r)^M$, all the relevant topologies on $(X^r)^M$ (the strong and weak-* topologies of $\cM_{\rm b}(\R^3)^M$ and $L^\infty(\R^3)^M$, the strong and weak topologies of $L^p(\R^3)$ for $1 < p < \infty$) are equivalent on $\overline{\cK}^0_{z_a}$. It is then easily seen using the concavity of the logarithm that the functions $F_a^n$ are convex and continuous, and strictly convex if $\rho_a^n \neq 0$. The same property holds for $F_a^*$. Let us prove that
\begin{equation} \label{eq:pre_Gamma_CV}
\overline\rho_a^{0,n} \mathop{\longrightarrow}_{n \to \infty} \overline\rho_a^{0,*} \mbox{ (for any of these topologies)} \quad \Rightarrow \quad 
F_a^n(\overline\rho_a^{0,n}) \mathop{\longrightarrow}_{n \to \infty}  F_a^*(\overline\rho_a^{0,*}).
\end{equation}
First, since for all $n\in \mathbb{N}$, $\overline{\rho}_a^{0,n}:= \sum_{k=1}^{m_{z_a}} {c}_{a,k}^n g_{z_a,k}$ where the $g_{z_a,k}$ are positive $L^1$-normalized functions of $X_+^r$ and $\sum_{k=1}^{m_{z_a}} {c}_{a,k}^n =1$, it holds that $g_a^- \leq \overline{\rho}_a^{0,n} \leq g_a^+$ so that $ \log g_a^- \leq \log \overline{\rho}_a^{0,n} \leq  \log g_a^+ $, where $g_a^-:= \mathop{\min}_{1\leq k \leq m_{z_a}} g_{z_a,k}$ and $g_a^+:= \mathop{\max}_{1\leq k \leq m_{z_a}} g_{z_a,k}$. 
Thus, we obtain that
 $$
 \rho_a^n |\log \overline{\rho}_a^{0,n}| \leq \rho(\cdot + \bm R_a)\max\left( |\log g_a^-|, |\log g_a^+|\right). 
 $$
Assumption (\ref{eq:cond_gzak}) implies that $\rho(\cdot + \bm R_a)|\log g_a^-|\in L^1(\mathbb{R}^3)$. In addition, since $g_{z_a,k} \in X_+^r$ for all $1\leq k \leq m_{z_a}$ and are thus bounded and go to 0 at infinity, it holds that $|\log g_a^+(\bm r)| \leq |\log g_a^-(\bm r)|$ for sufficiently large values of $|\bm r|$, which implies that $\rho(\cdot + \bm R_a) |\log g_a^+|\in L^1(\mathbb{R}^3$). Therefore, there exists a nonnegative function $h_a \in L^1(\R^3)$ such that for all $n \in \N$, 
$ \rho_a^n |\log \overline{\rho}_a^{0,n}| \leq h_a$. Let $\epsilon > 0$ and $R > 0$ such that $\int_{\R^3 \setminus B_R} h \le \epsilon$. Then,
\begin{align*}
|F_a^n(\overline\rho_a^{0,n}) -  F_a^*(\overline\rho_a^{0,*})| & \le 2\epsilon + \left| \int_{B_R}  \rho_a^n \log \overline\rho_a^{0,n} - \int_{B_R}  \rho_a^* \log \overline\rho_a^{0,*} \right|.
\end{align*}
Now, denoting by $B_R:=\{\br \in \R^3 \, | \, |\br| < R\}$, we have that $(\rho_a^n|_{B_R})_{n \in \N}$ weakly converges to $\rho_a^*|_{B_R}$ in $L^2(B_R)$, while $(\overline\rho_a^{0,n})_{n \in \N}$ converges strongly to  $\overline\rho_a^{0,*}$ since all the relevant topologies are equivalent on $\overline{\cK}^0_{z_a}$. The latter property also implies that $(\log \overline\rho_a^{0,n})_{n \in \N}$ converges strongly to  $\log \overline\rho_a^{0,*}$ as we have $g_a^- \le \overline\rho_a^{0,n} \le g_a^+$ with $g_a^+$ bounded and $g_a^-$ bounded away from zero on $B_R$ (this function is continuous and positive on the compact $\overline B_R$). Hence \eqref{eq:pre_Gamma_CV}. This properties implies in particular that the sequence of functionals $(F_a^n)_{n\in \mathbb{N}}$ $\Gamma$-converges to the functional $F_a^*$, and therefore that the minimizers of $F_a^n$ converge to the  minimizers of $F_a^*$, these minimizers being unique when $\rho_a^n$ and $\rho_a^*$ are not identically equal to zero. It follows from Lemma~\ref{lem:prelimG} that
$$
[\bm G^0_{\cK^0_{\rm L-ISA}}(\bm \rho^n)]_a = \rho_a^{0,n} = \left|\begin{array}{ll} N_a^n \argmin_{\overline{\cK}^0_{z_a}} F_a^n & \quad \mbox{if } N_a^n \neq 0, \\
0 & \quad \mbox{if } N_a^n = 0,
\end{array} \right.
$$
converges (in any relevant topologies) to 
$$
\rho_a^{0,*} = \left|\begin{array}{ll} N_a^* \argmin_{\overline{\cK}^0_{z_a}} F_a^* & \quad \mbox{if } N_a^* \neq 0, \\
0 & \quad \mbox{if } N_a^* = 0,
\end{array} \right. \quad = [\bm G^0_{\cK^0_{\rm L-ISA}}(\bm \rho^*)]_a.
$$
Hence the desired result.

 \medskip
 
\noindent\boxed{2}
 Let us now prove that $\left( \bm \rho^*, G^0_{\bcK^0}\left( \bm \rho^*\right)\right)$ is a minimizer to problem (\ref{eq:ISAopt}). To simplify the notation, let us set $\bm \rho^{0,*}=\left( \rho_a^{0,*}\right)_{1\leq a \leq M}:=G^0_{\bcK^0}\left( \bm \rho^*\right)$. It is sufficient to prove that for all $1 \le a \le M$, 
 $$
 s_{\rm KL}\left( \rho_a^* | \rho_a^{0,*} \right) \leq \mathop{\liminf}_{n\to +\infty} s_{\rm KL}\left( \rho_a^n | \rho_a^{0,n} \right).
 $$

\noindent\circled{i}
Let us again first assume that $N_a^* >0$. Then, for $n$ large enough, $2N_a^* > N_a^n > N_a^*/2$ and $\overline{\rho}_a^n:= \frac{\rho_a^n}{N_a^n}$ defines a probability measure on $\mathbb{R}^3$. 
 Since $\overline{\rho}_a^n \leq 2 \frac{\rho}{N_a^*}$ and $\rho \in L^1(\mathbb{R}^3)$, it can be easily checked that the sequence $(\overline{\rho}_a^n)_{n\in \mathbb{N}}$ is tight (in the sense of probability measures). 
Hence, up to the extraction of a subsequence, $(\overline{\rho}_a^n)_{n\in \mathbb{N}}$ weakly converges in the sense of probability measures to $\overline{\rho}_a
^* = \frac{\rho_a^*}{N_a^*}$, i.e. for all continuous bounded functions $f: \mathbb{R}^3 \to \mathbb{R}$, 
$$
\int_{\mathbb{R}^3} f \,d\overline{\rho}_a^n \mathop{\longrightarrow}_{n\to +\infty}\int_{\mathbb{R}^3} f \,d\overline{\rho}_a^*.
$$

\medskip

Besides, using similar arguments as above, it holds that the sequence $\left(\overline{\rho}_a^{0,n}\right)_{n\in\mathbb{N}}$ with $\overline{\rho}_a^{0,n}:=\frac{\rho_a^{0,n}}{N_a^n}$ weakly converges in the sense of probability measures to $\overline{\rho}_a^{0,*}:=\frac{\rho_a^{0,*}}{N_a^*}$. 
By the joint lower semi-continuity of the Kullback-Leibler divergence for the weak convergence of probability measures, it then holds that 
$$
\int_{\mathbb{R}^3} \rho_a^* \log(\rho_a^*/ \rho_a^{0,*}) \leq \mathop{\liminf}_{n\to +\infty} \int_{\mathbb{R}^3} \rho_a^n \log(\rho_a^n/ \rho_a^{0,n}).
$$

\medskip

\noindent\circled{ii} Let us now assume that $N_a^*=0$, which implies that $\rho_a^*= \rho_a^{0,*} =0$. Since 
$
\displaystyle N_a^n \mathop{\longrightarrow}_{n\to +\infty} N_a^*=0$, the sequences $(\rho_a^n)_{n\in\mathbb{N}}$ and $(\rho_a^{0,n})_{n\in \mathbb{N}}$ strongly converge to $0$ in $L^1(\mathbb{R}^3)$. Thus, up to the extraction of subsequences, both sequences converge almost everywhere to $0$. Using the fact that
\begin{itemize}
\item  
 $|x\log x| \leq |y\log y|$ for all $y\leq 1/e$ and $0\leq x \leq y$; 
\item 
$|x\log x|\leq e$ for all $0 \leq x \leq e$,
\item  
 $|x\log x| \leq |y\log y|$ for all $y\geq e$ and $0\leq x \leq y$; 
\end{itemize}
and the bounds $0\leq \rho_a^n\leq \rho(\cdot + R_a)$ for all $n\in\mathbb{N}$, 
we obtain that 
$$
|\rho_a^n \log \rho_a^n| \leq e \chi_{\frac{1}{e} \leq \rho(\cdot + R_a) \leq e} + |\rho(\cdot + R_a) \log \rho(\cdot + R_a)| \left(1 - \chi_{\frac{1}{e} \leq \rho(\cdot +R_a) \leq e}\right).
$$
In the case when $\bcK^0 = \bcK^0_{\rm ISA}$, it holds that $\rho_a^{0,n} =\langle \rho_a^n \rangle_s$ for all $n\in \mathbb{N}$, so that $0 \leq \rho_a^{0,n}  \leq \langle \rho(\cdot + R_a) \rangle_s$. Hence, 
$$
|\rho_a^{0,n} \log (\rho_a^{0,n})| \leq e \chi_{\frac{1}{e} \leq \langle \rho(\cdot + R_a) \rangle_s \leq e} + |\langle \rho( \cdot + R_a) \rangle_s \log \langle \rho(\cdot + R_a) \rangle_s| \left(1 - \chi_{\frac{1}{e} \leq \langle \rho( \cdot + R_a) \rangle_s \leq e}\right) ,
$$
where $\chi_E$ denotes the characteristic function of $E$.  
Note also that $\langle \rho( \cdot + R_a) \rangle_s\log \langle \rho( \cdot + R_a) \rangle_s \in L^1(\mathbb{R}^3)$ by the convexity of the function $\mathbb{R}_+ \ni x \mapsto x\log x$. Thus, since
$$
\int_{\mathbb{R}^3} \rho_a^n \log\left( \frac{\rho_a^n}{\langle \rho_a^{n}\rangle_s}\right) = \int_{\mathbb{R}^3} \rho_a^n \log\left(\rho_a^n\right) - \int_{\mathbb{R}^3} \langle\rho_a^{n}\rangle_s \log\left(\langle \rho_a^{n}\rangle_s\right),
$$
we deduce from the Lebesgue dominated convergence theorem that
$$
\int_{\mathbb{R}^3} \rho_a^n \log\left( \frac{\rho_a^n}{\langle \rho_a^{n}\rangle_s}\right) \mathop{\longrightarrow}_{n\to +\infty} 0 = \int_{\mathbb{R}^3} \rho_a^* \log\left( \frac{\rho_a^*}{\langle \rho_a^{*}\rangle_s}\right).
$$

\medskip

In the case when $\bcK^0 = \bcK^0_{\rm L-ISA}$, since for all $n\in \mathbb{N}$, $\rho_a^{0,n}:= \sum_{k=1}^{m_{z_a}} c_{a,k}^n g_{z_a,k}$ where the $g_{z_a,k}$ are positive $L^1$-normalized functions of $X_+^r$, it holds that $N_a^n g_a^- \leq \rho_a^{0,n} \leq N_a^n g_a^+$ so that $ \log g_a^- + \log N_a^n \leq \log \rho_a^{0,n} \leq  \log g_a^+ + \log N_a^n$. As a consequence, 
$$
|\log \rho_a^{0,n}|\leq |\log N_a^n| + \max\left( |\log g_a^-|, |\log g_a^+|\right),
$$
and 
$$
\rho_a^n |\log \rho_a^{0,n}| \leq \rho_a^n |\log N_a^n| + \rho_a^n \max\left( |\log g_a^-|, |\log g_a^+|\right). 
$$
On the one hand, 
$$
\int_{\mathbb{R}^3} \rho_a^n |\log N_a^n| = N_a^n |\log N_a^n| \mathop{\longrightarrow}_{n\to +\infty} 0.
$$
On the other hand, $\rho_a^n \max\left( |\log g_a^-|, |\log g_a^+|\right) \leq \rho(\cdot+\bR_a) \max\left( |\log g_a^-|, |\log g_a^+|\right)\in L^1(\mathbb{R}^3)$.
Thus, the Lebesgue dominated convergence theorem yields that 
$$
\int_{\mathbb{R}^3}\rho_a^n \max\left( |\log g_a^-|, |\log g_a^+|\right) \mathop{\longrightarrow}_{n\to +\infty} 0. 
$$
Finally, $\int_{\mathbb{R}^3} \rho_a^n |\log \rho_a^{0,n}| \mathop{\longrightarrow}_{n\to +\infty} 0$, 
and 
$$
\int_{\mathbb{R}^3} \rho_a^n \log\left( \frac{\rho_a^n}{\rho_a^{0,n}}\right) \mathop{\longrightarrow}_{n\to +\infty} 0 = \int_{\mathbb{R}^3} \rho_a^* \log\left( \frac{\rho_a^*}{ \rho_a^{0,*}}\right).
$$

We have thus proved that in any case,
$$
\int_{\mathbb{R}^3} \rho_a^* \log\left( \frac{\rho_a^*}{\langle \rho_a^{*}\rangle_s}\right) \leq \mathop{\liminf}_{n\to +\infty} \int_{\mathbb{R}^3} \rho_a^n \log\left( \frac{\rho_a^n}{\langle \rho_a^{n}\rangle_s}\right)
$$
for all $1\leq a \leq M$, which implies that $\left(\bm \rho^*, \bm\rho^{0,*}\right)$ is a minimizer to (\ref{eq:ISAopt}). 

\medskip

\noindent\boxed{3}
The uniqueness comes from the strict convexity of the functional 
$$
\bcK_{\rho, \bR} \times \bcK^0 \ni \left(\bm \rho, \bm\rho^0\right) \mapsto S(\bm \rho | \bm \rho^0 ) \in \R_+ \cup \{+\infty\}
$$
on the convex subset of $\bcK_{\rho, \bR} \times \bcK^0$ on which it is finite.

\medskip

Lastly, (\ref{eq:EL1}) is a direct consequence of the fact that 
$$
\bm \rho^*:= {\rm argmin}_{\bm \rho \in \bcK_{\rho, \bR}} S\left( \bm \rho| \bm \rho^{0,*}\right)
$$
and Lemma~\ref{lem:Step1}.

\subsection{Proof of Theorem~\ref{th:convergence}}\label{sec:proofconvergence}

It is easy to check that the assumptions $\rho > 0$ and $s_{\rm KL}(\rho|\rho^{0,(0)}) < +\infty$ ensure that for both ISA and L-ISA, the first iteration is well defined. Assume now by induction that the first $\widetilde m$ iterations are well-defined for some $\widetilde m \in \N^*$ and that $s_{\rm KL}(\rho|\rho^{0,(m-1)}) < +\infty$ for all $1 \le m \le \widetilde m$.

\medskip

For all $1 \le m \le \widetilde m$ and $1\leq a \leq M$, we denote by 
$$
N_a^{(m)}:= \int_{\mathbb{R}^3} \rho_a^{(m)} = \int_{\mathbb{R}^3} \rho_a^{0,(m)} \quad \mbox{ and } \quad \bN^{(m)} = (N_a^{(m)})_{1 \leq a \leq M}\in \mathbb{R}_+^M.
$$
From the assumptions of Theorem~\ref{th:convergence}, it can be easily proven by recursion that $N_a^{(m)}>0$ for all $1 \le m \le \widetilde m$  and all $1\leq a \leq M$. 
 
 \medskip
 
 Using Lemmata~\ref{lem:Step1} and~\ref{lem:prelimG}, we obtain that for all $1 \le m \le \widetilde m$, $\bm \rho^{(m)}$ and $\bm \rho^{0,(m)}$ are respectively minimizers of 
 \begin{equation}\label{eq:iter1}
\bm \rho^{(m)} \in \mathop{\rm argmin}_{\bm \rho \in \bcK_{\rho, \bR}} S(\bm \rho|\bm \rho^{0,(m-1)})
 \end{equation}
 and
 \begin{equation}\label{eq:iter2}
\bm \rho^{0,(m)} \in \mathop{\rm argmin}_{\bm \rho^0 \in \bcK^0, \; \bcN(\bm \rho^0) = \bN^{(m)}} S(\bm \rho^{(m)} |\bm \rho^0).      
 \end{equation}
In order to prove (\ref{eq:decrease}), we first show that
\begin{equation}\label{eq:ineq1}
 S(\bm \rho^{(m)}| \bm \rho^{0,(m-1)}) \geq S (\bm \rho^{(m)}| \bm \rho^{0,(m)}).
\end{equation}
It follows from Lemmata~\ref{lem:Step1} and~\ref{lem:prelimG} that $\bm\mu^{(m)}\odot \bm \rho^{0,(m)}$, with $\bm \mu^{(m)}:=\left( \frac{N_a^{(m-1)}}{N_a^{(m)}}\right)_{1\leq a \leq M}$, is the unique minimizer of 
  \begin{equation}\label{eq:iter2niter}
\inf_{\bm \rho^0 \in \bcK^0, \; \bcN(\bm \rho^0) = \bN^{(m-1)}} S(\bm \rho^{(m)} |\bm \rho^0).    \end{equation}
We therefore have 
  \begin{align*}
  S(\bm \rho^{(m)}| \bm \rho^{0,(m-1)})
  & \geq S(\bm \rho^{(m)}|\bm\mu^{(m)}\odot \bm \rho^{0,(m)}) \\
  & = \sum_{a=1}^M \int_{\mathbb{R}^3} \rho_a^{(m)} \log\left( \frac{\rho_a^{(m)}}{\frac{N_a^{(m-1)}}{N_a^{(m)}} \rho_a^{0,(m)}}\right) \\
  & = \sum_{a=1}^M \int_{\mathbb{R}^3} \rho_a^{(m)} \log\left( \frac{\rho_a^{(m)}}{\rho_a^{0,(m)}}\right) +  \int_{\mathbb{R}^3} \rho_a^{(m)} \log\left( \frac{N_a^{(m)}}{N_a^{(m-1)}}\right)\\
  & = \sum_{a=1}^M \int_{\mathbb{R}^3} \rho_a^{(m)} \log\left( \frac{\rho_a^{(m)}}{\rho_a^{0,(m)}}\right) +  N_a^{(m)} \log\left( \frac{N_a^{(m)}}{N_a^{(m-1)}}\right),\\
 & = S (\bm \rho^{(m)}| \bm \rho^{0,(m)}) + \sum_{a=1}^M N_a^{(m)} \log\left( \frac{N_a^{(m)}}{N_a^{(m-1)}}\right).\\
  \end{align*}
It thus remains to show that
$$
\sum_{a=1}^M N_a^{(m)} \log\left( \frac{N_a^{(m)}}{N_a^{(m-1)}}\right) \geq 0.
$$
We now use the fact that $-\log x \geq 1 -x$ (since $\log x \leq x-1$) to get
  \begin{align*}
     \sum_{a=1}^M N_a^{(m)} \log\left( \frac{N_a^{(m)}}{N_a^{(m-1)}}\right) &= \sum_{a=1}^M -N_a^{(m)} \log\left( \frac{N_a^{(m-1)}}{N_a^{(m)}}\right)\\
     & \geq\sum_{a=1}^M N_a^{(m)}\left(1 - \frac{N_a^{(m-1)}}{N_a^{(m)}} \right) 
     = \sum_{a=1}^M (N_a^{(m)} - N_a^{(m-1)})
     = 0,
  \end{align*}
  since 
  $$\sum_{a=1}^M N_a^{(m)} = \sum_{a=1}^M \int_{\mathbb{R}^3} \rho_a^{(m)}  =
  \int_{\mathbb{R}^3} \rho
  = \sum_{a=1}^M \int_{\mathbb{R}^3} \rho_a^{(m-1)}
  = \sum_{a=1}^M N_a^{(m-1)}.
  $$ 
  Hence (\ref{eq:ineq1}). Let us now show that
  \begin{equation}\label{eq:ineq2}
  S(\bm \rho^{(m-1)} | \bm \rho^{0,(m-1)} )  \geq  S(\bm \rho^{(m)}| \bm \rho^{0, (m-1)}) + \frac{1}{2\|\rho\|_{L^\infty}} \sum_{a=1}^M \left\| \rho_a^{(m)} - \rho_a^{(m-1)} \right\|_{L^2}^2. 
  \end{equation}
Note that, by definition of $\bm \rho ^{(m)}$, $S(\bm \rho^{(m-1)} | \bm \rho^{0,(m-1)} )  \geq  S(\bm \rho^{(m)}| \bm \rho^{0, (m-1)})$. 
Using a second-order Taylor expansion formula with integral remainder, it holds that for all $x,x',y >0$,
$$
x'\log\left( \frac{x'}{y} \right) \geq x \log \left( \frac{x}{y} \right) + (x'-x)\left[ \log \left(\frac{x}{y} \right) + 1\right] + \frac{1}{\max(x,x')} \frac{(x-x')^2}{2}.
$$
As a consequence, since for all $1\leq a \leq M$, 
$\frac{1}{\|\rho_a^{(m)}\|_{L^\infty}} \geq \frac{1}{\|\rho\|_{L^\infty}}$, we obtain that
\begin{align*}
   S(\bm \rho^{(m-1)} | \bm \rho^{0,(m-1)} )  & \geq  S(\bm \rho^{(m)}| \bm \rho^{0, (m-1)}) + \sum_{a=1}^M \int_{\mathbb{R}^3} (\rho_a^{(m-1)} - \rho_a^{(m)})\left[\log\left( \frac{\rho_a^{(m)}}{\rho_a^{0,(m-1)}}\right) + 1\right]\\
   & + \sum_{a=1}^M \frac{1}{2\|\rho\|_{L^\infty}} \int_{\mathbb{R}^3} (\rho_a^{(m-1)} - \rho_a^{(m)})^2.\\
\end{align*}
Using the fact that the expression
$$
\frac{\rho_a^{(m)}(\bm r)}{\rho_a^{0,(m-1)}(\bm r)} = \frac{\rho(\bm r + \bR_a)}{\sum_{b=1}^M  \rho_b^{0,(m-1)}(\bm r - \bR_b + \bR_a)}
$$
is independent of $a$ and the equality
$$
\sum_{a=1}^M \int_{\mathbb{R}^3} \rho_a^{(m-1)} = \sum_{a=1}^M \int_{\mathbb{R}^3} \rho_a^{(m)} = \int_{\mathbb{R}^3} \rho, 
$$
we thus obtain that
\begin{align*}
   S(\bm \rho^{(m-1)} | \bm \rho^{0,(m-1)} )  & \geq  S(\bm \rho^{(m)}| \bm \rho^{0, (m-1)})\\
   & + \sum_{a=1}^M \int_{\mathbb{R}^3} (\rho_a^{(m-1)}(\bm r - \bR_a) - \rho_a^{(m)}(\bm r - \bR_a))\log\left( \frac{\rho(\bm r )}{\sum_{b=1}^M  \rho_b^{0,(m-1)}(\bm r - \bR_b)}\right)\,d\bm r\\
   & + \frac{1}{2\|\rho\|_{L^\infty}}\sum_{a=1}^M  \left\|\rho_a^{(m-1)} - \rho_a^{(m)}\right\|_{L^2}^2,\\
    & = S(\bm \rho^{(m)}| \bm \rho^{0, (m-1)}) + \int_{\mathbb{R}^3} (\rho(\bm r) - \rho(\bm r))\log\left( \frac{\rho(\bm r )}{\sum_{b=1}^M  \rho_b^{0,(m-1)}(\bm r - \bR_b)}\right)\,d\bm r\\
   & + \frac{1}{2\|\rho\|_{L^\infty}}\sum_{a=1}^M  \left\|\rho_a^{(m-1)} - \rho_a^{(m)}\right\|_{L^2}^2,\\
    & = S(\bm \rho^{(m)}| \bm \rho^{0, (m-1)}) + \frac{1}{2\|\rho\|_{L^\infty}}\sum_{a=1}^M  \left\|\rho_a^{(m-1)} - \rho_a^{(m)}\right\|_{L^2}^2.
\end{align*}
Hence (\ref{eq:ineq2}). Collecting this result together with (\ref{eq:ineq1}), we obtain (\ref{eq:decrease}). It also follows from \eqref{eq:min_on_KrhoR}, \eqref{eq:iter1}, and \eqref{eq:ineq1} that 
$$
s_{\rm KL}(\rho,\rho^{0,(m-1)}) = S(\bm\rho^{(m)}|\bm\rho^{0,(m-1)}) \ge S(\bm\rho^{(m)}|\bm\rho^{0,(m)}) \ge \inf_{\bm\rho \in \bcK_{\rho,\bR}} S(\bm\rho|\bm\rho^{0,(m)}) = s_{\rm KL}(\rho,\rho^{0,(m)}).
$$
In particular $s_{\rm KL}(\rho,\rho^{0,(m)}) < +\infty$ for all $1 \le m \le \widetilde m$, so that the iteration $\widetilde m+1$ is well defined. By recursion, all the iterations are well defined and the above results are valid for all $m \ge 1$. The bound \eqref{eq:decrease_KL} is obtained by a simple combination of the above inequalities.

\medskip

Another consequence of (\ref{eq:decrease}) is that $\left( S(\bm \rho^{(m)} | \bm \rho^{0,(m)}) \right)_{m\geq 0}$ is a non-increasing sequence of nonnegative real numbers, hence converges to a nonnegative real number as $m$ tends to infinity. The bound  (\ref{eq:decrease})  also implies that
$$
\sum_{m\geq 1} \sum_{a=1}^M \left\| \rho_a^{(m)} - \rho_a^{(m-1)} \right\|_{L^2}^2 < +\infty,
$$
which, in turn, implies (\ref{eq:diffiter}). 

\medskip

 Let us finally prove the last assertion of Theorem~\ref{th:convergence}. We infer from Lemma~\ref{lem:topology_KrhoR} that there exist $\bm\rho^* \in\bcK_{\rho,\bR}$ and a subsequence $(\bm\rho^{(m_l)})_{l \in \N}$ of $(\bm\rho^{(m)})_{m \in \N}$ converging to $\bm\rho^*$ for the weak-* topologies of $\cM_{\rm b}(\R^3)^M$ and $L^\infty(\R^3)^M$, and for the weak topology of $L^p(\R^3)^M$ for any $1 < p < \infty$, and such that
$$
\forall 1 \le a \le M, \quad N_a^{(m_l)}:=\int_{\R^3} \rho_a^{(m_l)}(\br) \, dr \mathop{\longrightarrow}_{l \to \infty} \int_{\R^3} \rho_a^*(\br) \, dr =:N_a^*.
$$
In addition, we can prove that there exists $\bm \rho^{0,*}\in \bcK^0$, such that, up to extraction of a subsequence,
$(\bm\rho^{0,(m_l)})_{l \in \N}$ converges to $\bm\rho^{0,*}$ for the weak-* topologies of $\cM_{\rm b}(\R^3)^M$ and $L^\infty(\R^3)^M$, and for the weak topology of $L^p(\R^3)^M$ for any $1 < p < \infty$. 
Using similar arguments as in the proof of Theorem~\ref{thm:WPcontPb}, we obtain that 
$$
\bm \rho^{0,*} = \bm G^0_{\bcK^0_{\rm L-ISA}}(\bm \rho^*). 
$$
Moreover, since $\bcK^0_{\rm L-ISA}\subset {\rm Span}\left\{ g_{z_a,k}, 1\leq a \leq M, \; 1 \leq k \leq m_{z_a}\right\}$, which is a finite-dimensional vector space, the convergence of the sequence $(\bm\rho^{0,(m_l)})_{l \in \N}$ to $\bm \rho^{0,*}$ holds strongly in any $L^p(\mathbb{R}^3)$ for any $1\leq p \leq +\infty$. 

Using the fact that
$$
\rho_a^{(m_l+1)}(\bm r) = \frac{\rho_a^{0,(m_l)}(\bm r) }{\sum_{b=1}\rho_b^{0,(m_l)}(\bm r- \bR_b + \bR_a)} \rho(\bm r + \bR_a)
$$
together with $\eqref{eq:diffiter}$ yields that
$$
\rho_a^*(\bm r) =  \frac{\rho_a^{0,*}(\bm r) }{\sum_{b=1}\rho_b^{0,*}(\bm r- \bR_b + \bR_a)} \rho(\bm r + \bR_a). 
$$
This relationship, together with the fact that $\bm \rho^{0,*} = \bm G^0_{\bcK^0_{\rm L-ISA}}(\bm \rho^*)$ implies that $(\bm \rho^*, \bm \rho^{0,*})$ is the unique minimizer of (\ref{eq:ISAopt}). Hence the desired result.

\subsection{Proof of Proposition~\ref{prop:spheric}}\label{sec:sphericproof}

 If $\rho(\cdot + \bR_2) = \langle \rho(\cdot + \bR_2) \rangle_s$, it is clear that $\rho^{\rm opt}_1 = 0$ and $\rho^{\rm opt}_2 = \langle \rho^{\rm opt}\rangle_s$. 
 
 Conversely, let us assume that $\rho^{\rm opt}_1= 0$ (and therefore $\rho_2^{\rm opt}=\rho(\cdot + \bR_2) > 0$ on $\R^3$, which implies that $\langle \rho_2^{\rm opt} \rangle_s = \langle\rho(\cdot + \bR_2) \rangle_s  > 0$ on $\R^3$), and prove that necessarily $\rho_2^{\rm opt}= \langle \rho(\cdot + \bR_2) \rangle_s$. 
 For all $\bm \rho \in \bcK_{\rho, \bR}$, we denote by $\mathcal J(\bm \rho):= S(\bm \rho | \langle \bm \rho \rangle_s)$. 
 Let us consider radially symmetric functions $h_1 \in X^r$ and $h_2 \in X^r$ such that $\langle \rho_2^{\rm opt} \rangle_s + \epsilon h_2 > 0$ a.e. for $\epsilon >0$ small enough, and 
 consider the perturbed AIM densities
 $$
 \rho_a^\epsilon := \frac{\langle \rho_a^{\rm opt}\rangle_s + \epsilon h_a}{\sum_b (\langle \rho_b^{\rm opt}\rangle_s+\epsilon h_b)(\cdot -\bR_b + \bR_a)} \rho(\cdot + \bR_a).
 $$
  To the first order in $\epsilon$, we have, uniformly in $L^\infty_{\rm loc}(\R^3)$,
 \begin{align*}
 \rho_1^\epsilon &= \epsilon  \frac{h_1}{\langle \rho(\cdot + \bR_2) \rangle_s(\cdot+\bR_1)} \rho(\cdot + \bR_1) + o(\epsilon), \\
 \rho_2^\epsilon  &=  \rho(\cdot + \bR_2) - \epsilon  \frac{h_1(\cdot-\bR_1+\bR_2)}{\langle \rho(\cdot + \bR_2) \rangle_s} \rho(\cdot + \bR_2) + o(\epsilon),  \end{align*}
 and thus
 \begin{align*}
 \mathcal J(\bm\rho^\epsilon) - \mathcal J(\bm\rho^{\rm opt})  & = - \epsilon \int_{\mathbb{R}^3} \frac{\rho(\cdot +\bR_2)}{\langle \rho(\cdot +\bR_2)\rangle_s}\log\left( \frac{\rho(\cdot +\bR_2)}{\langle \rho(\cdot +\bR_2)\rangle_s} \right)h_1(\cdot -\bR_1 + \bR_2) + o(\epsilon).\\
 \end{align*} 
 Let us denote by 
 $$
 \rho^{s,2}(\bm r):= \langle \rho(\cdot +\bR_2)\rangle_s\left( \bm r - \bR_2\right). 
 $$
 It then holds that
 \begin{align*}
 \mathcal J(\bm\rho^\epsilon) - \mathcal J(\bm\rho^{\rm opt}) &  = - \epsilon \int_{\mathbb{R}^3} \frac{\rho(\cdot +\bR_1)}{ \rho^{s,2}(\cdot +\bR_1)}\log\left( \frac{\rho(\cdot +\bR_1)}{ \rho^{s,2}(\cdot +\bR_1)} \right)h_1 + o(\epsilon)\\
 & = - \epsilon \int_{\mathbb{R}^3} \left\langle \frac{\rho(\cdot +\bR_1)}{ \rho^{s,2}(\cdot +\bR_1)}\log\left( \frac{\rho(\cdot +\bR_1)}{ \rho^{s,2}(\cdot +\bR_1)} \right) \right\rangle_s h_1 + o(\epsilon)\\
 \end{align*}
 First, we have
 \begin{align*}
\int_{\mathbb{R}^3} \left\langle \frac{\rho(\cdot +\bR_1)}{ \rho^{s,2}(\cdot +\bR_1)}\log\left( \frac{\rho(\cdot +\bR_1)}{ \rho^{s,2}(\cdot +\bR_1)} \right) \right\rangle_s & = \int_{\mathbb{R}^3}\frac{\rho(\cdot +\bR_1)}{ \rho^{s,2}(\cdot +\bR_1)}\log\left( \frac{\rho(\cdot +\bR_1)}{ \rho^{s,2}(\cdot +\bR_1)} \right)\\
 & = \int_{\mathbb{R}^3} \frac{\rho}{\rho^{s,2}}\log\left( \frac{\rho}{\rho^{s,2}} \right)\\
 & = \int_{\mathbb{R}^3}\left( \frac{\rho}{\rho^{s,2}}\log\left( \frac{\rho}{\rho^{s,2}} \right)\right)^{s,2}\\
  & = \int_{\mathbb{R}^3}\frac{\rho(\cdot +\bR_2)}{\langle \rho(\cdot +\bR_2)\rangle_s}\log\left( \frac{\rho(\cdot +\bR_2)}{\langle \rho(\cdot +\bR_2)\rangle_s} \right)\\
    & = \int_{\mathbb{R}^3}\left\langle \frac{\rho(\cdot +\bR_2)}{\langle \rho(\cdot +\bR_2)\rangle_s}\log\left( \frac{\rho(\cdot +\bR_2)}{\langle \rho(\cdot +\bR_2)\rangle_s} \right)\right\rangle_s\\
  & \geq \int_{\mathbb{R}^3}\frac{\langle\rho(\cdot +\bR_2)\rangle_s}{\langle \rho(\cdot +\bR_2)\rangle_s}\log\left( \frac{\langle \rho(\cdot +\bR_2)\rangle_s}{\langle \rho(\cdot +\bR_2)\rangle_s} \right) = 0,
 \end{align*}
 the last inequality being a consequence of Jensen's inequality. Let us reason by contradiction and assume that
$$
 \int_{\mathbb{R}^3} \left\langle \frac{\rho(\cdot +\bR_1)}{ \rho^{s,2}(\cdot +\bR_1)}\log\left( \frac{\rho(\cdot +\bR_1)}{ \rho^{s,2}(\cdot +\bR_1)} \right) \right\rangle_s >0. 
$$
There would then exist a bounded Borel subset $A$ of $[0, +\infty)$ with positive Lebesgue measure such that $w(r) > 0$ for almost all $r\in A$ where $w\in Y_+$ is the function such that
$$
\forall r>0, \; \forall \bm\sigma \in \S^2, \quad w(r):= \left\langle \frac{\rho(\cdot +\bR_1)}{ \rho^{s,2}(\cdot +\bR_1)}\log\left( \frac{\rho(\cdot +\bR_1)}{ \rho^{s,2}(\cdot +\bR_1)} \right) \right\rangle_s(r\bm\sigma).
$$
Choosing a particular function $h_1$ such that $h_1(r\bm\sigma) = g_1(r)$ with $g_1 >0$ on $A$ and $h_1 = 0$ on $[0, +\infty) \setminus A$ would imply that 
$$
\int_{\mathbb{R}^3} \left\langle \frac{\rho(\cdot +\bR_1)}{ \rho^{s,2}(\cdot +\bR_1)}\log\left( \frac{\rho(\cdot +\bR_1)}{ \rho^{s,2}(\cdot +\bR_1)} \right) \right\rangle_s h_1  = 4\pi \int_0^{+\infty} w(r) g_1(r)r^2\,dr >0.
$$
 Letting $\epsilon$ go to $0$, we reach a contradiction since $(\bm \rho^{\rm opt}, \langle \bm \rho^{\rm opt} \rangle_s)$ is a minimizer of (\ref{eq:ISAopt}). 
 
 Hence, it necessarily holds that 
 $$
 \int_{\mathbb{R}^3} \left\langle \frac{\rho(\cdot +\bR_1)}{ \rho^{s,2}(\cdot +\bR_1)}\log\left( \frac{\rho(\cdot +\bR_1)}{ \rho^{s,2}(\cdot +\bR_1)} \right) \right\rangle_s  = \int_{\mathbb{R}^3}\left\langle \frac{\rho(\cdot +\bR_2)}{\langle \rho(\cdot +\bR_2)\rangle_s}\log\left( \frac{\rho(\cdot +\bR_2)}{\langle \rho(\cdot +\bR_2)\rangle_s} \right)\right\rangle_s =0.
 $$
 Since $\left\langle \frac{\rho(\cdot +\bR_2)}{\langle \rho(\cdot +\bR_2)\rangle_s}\log\left( \frac{\rho(\cdot +\bR_2)}{\langle \rho(\cdot +\bR_2)\rangle_s} \right)\right\rangle_s\geq 0$ almost everywhere from Jensen's inequality, we obtain
 $$
\left\langle \frac{\rho(\cdot +\bR_2)}{\langle \rho(\cdot +\bR_2)\rangle_s}\log\left( \frac{\rho(\cdot +\bR_2)}{\langle \rho(\cdot +\bR_2)\rangle_s} \right)\right\rangle_s = 0.
 $$
 This equality implies that 
 $$
 \left\langle \rho(\cdot +\bR_2) \log \rho(\cdot +\bR_2)\right\rangle_s = \langle \rho(\cdot +\bR_2)\rangle_s\log \langle \rho(\cdot +\bR_2)\rangle_s \mbox{ a.e.}
 $$
 Using again Jensen's inequality and the strict convexity of the function $x \mapsto x\log x$, we finally obtain $\rho(\cdot +\bR_2)= \langle \rho(\cdot +\bR_2)\rangle_s$. Hence the desired result.

\subsection{Proof of Proposition~\ref{prop:continuity}} \label{sec:continuity}

It is clear from their definition that the functions $w_a$ are nonnegative and bounded by $M_\rho$. Taking spherical averages in the optimality condition \eqref{eq:EL1}, and using \eqref{eq:EL11}, we see that for all $1 \le a \le 2$,
\begin{align*}
I_a(r) &=  \fint_{\S^2} \frac{\rho(\bR_a+r \bm\sigma)}{w_a(r)+w_b(|\bR_a+r\bm\sigma-\bR_b|)} \, d\bm\sigma = 1, \quad \mbox{or} \quad w_a(r)=0,
\end{align*}
where $b = 2$ if $a=1$ and $b=1$ if $a=2$. For given and fixed $w_b$, the
function $w_a$ is therefore defined implicitly on $(0,+\infty)$ by
$$
\mbox{for almost all } r \in (0,+\infty), \quad F_a(r,w_a(r))=0 \quad \mbox{or} \quad w_a(r)=0,
$$
where the function $F_a: (0,+\infty) \times \R_+ \to [0,+\infty]$ is given by
$$
F_a(r,y)=\fint_{\S^2} \frac{\rho(\bR_a+r \bm\sigma)}{y+ w_b(|\bR_a+r\bm\sigma-\bR_b|)} \, d\bm\sigma - 1.
$$
Note that the function $\S^2 \ni \bm\sigma \mapsto \rho(\bR_a+r \bm\sigma)$ is continuous on $\S^2$, nonnegative, and bounded by~$M_\rho$. Consequently, $F_a$ is well-defined on $(0,+\infty) \times (0,+\infty)$, and for each $r > 0$, the function $(0,+\infty) \ni y \mapsto F_a(r,y) \in \R$ is decreasing and converges to $-1$ at infinity, so that for each $r \in (0,+\infty)$, there exists at most one $y \in \R_+$ such that $F_a(r,y)=0$. We denote by $w_a^+(r)$ the unique solution in $\mathbb{R}_+$ to $F_a(r,w_a^+(r))=0$ if it exists, and we set $w_a^+(r)=0$ otherwise. We therefore have $w_a(r) \in \{0,w_a^+(r)\}$ for almost all $r \in \R_+$.

\medskip
We have for all $r,y,z \in (0,+\infty)$,
\begin{align} \label{eq:pre_IFT}
\left| F_a(r,y)-F_a(r,z) \right| & = \left| 
\fint_{\S^2} \rho(\bR_a+r \bm\sigma) \left( \frac{1}{y+w_b(|\bR_a+r\bm\sigma-\bR_b|)}- \frac{1}{z+ w_b(|\bR_a+r\bm\sigma-\bR_b|)} \right) \, d\bm\sigma \right| \nonumber \\
&=  K_a(r,y,z) \, |y-z|,
\end{align}
with
$$
K_a(r,y,z):= \fint_{\S^2}  \frac{\rho(\bR_a+r \bm\sigma) }{(y+ w_b(|\bR_a+r\bm\sigma-\bR_b|))(z+ w_b(|\bR_a+r\bm\sigma-\bR_b|))} \, d\bm\sigma.
$$
Using the fact that $\rho$ is continuous and positive everywhere on $\R^3$, hence bounded away from zero on any compact subset of $\R^3$, we get that for all $(r,y,z) \in (0,+\infty) \times [0,M_\rho]^2$,
\begin{align*}
K_a(r,y,z) & \ge   \frac{m_a(r)}{2M_\rho^2} > 0
\end{align*}
where
$$
m_a(r):=\min_{\overline{B}(0,r+|\bR_a|)} \rho
$$
is a continuous, positive, decreasing function of $r$.

\medskip

For $r_1,r_2 \in \R_+$ such that $r_1+r_2 \ge R=|\bR_a-\bR_b|$, we denote by $\theta_1(r_1,r_2)$ and $\theta_2(r_1,r_2)$, the unique real numbers in $[0,\pi]$ such that
$$
r_1^2+R^2-2r_1R\cos(\theta_1(r_1,r_2))=r_2^2 \quad \mbox{and} \quad 
r_2^2+R^2-2r_2R\cos(\theta_2(r_1,r_2))=r_1^2,
$$
and by $\widetilde\rho_a(r,\theta,\phi)$ the representation of the total density $\rho$ in spherical coordinates relative to $\bR_a$, with the $z$-axis aligned with $\be:= \frac{\bR_b - \bR_a}{R}$, i.e.
$$
\widetilde\rho_a(r,\theta,\phi):=\rho(\bR_a+r(\sin\theta\cos\phi \, \be_x+\sin \theta\sin\phi \, \be_y + \cos\theta \,  \be)),
$$
where $(\be_x,\be_y,\be)$ forms an orthonormal basis of $\R^3$.
Using the change of variables $r_2=(r_1^2+R^2-2r_1R\cos\theta)^{1/2}$, for which $r_1R\sin\theta \, d\theta = r_2 \, dr_2$, we have
\begin{align*}
I_1(r_1) &= \frac{1}{4\pi} \int_0^\pi d\theta \int_0^{2\pi} d\phi \, \frac{\widetilde \rho_1(r_1,\theta,\phi)}{w_1(r_1)+w_2((r_1^2+R^2-2r_1R\cos\theta)^{1/2})} \, \sin \theta \\
&= \frac{1}{4\pi} \int_{|R-r_1|}^{R+r_1} dr_2 \int_0^{2\pi} d\phi \, \frac{\widetilde \rho_1(r_1,\theta_1(r_1,r_2),\phi)}{w_1(r_1)+w_2(r_2)} \, \frac{r_2}{r_1R} \\
&= \int_{|R-r_1|}^{R+r_1} \frac{h_1(r_1,r_2)}{w_1(r_1)+w_2(r_2)} \,  dr_2, 
\end{align*}
where the function $h_1:D_1\to \R_+$ is defined by 
$$
D_1:=\{(r_1,r_2), \; r_1 \in (0,+\infty), \; r_2 \in [|R-r_1|,R+r_1]\}, \quad h_1(r_1,r_2) := \frac{r_2}{4\pi r_1R} \int_0^{2\pi} \widetilde\rho_1(r_1,\theta_1(r_1,r_2),\phi) \, d\phi.
$$
The function $w_1$ is therefore defined implicitly on $(0,+\infty)$ by 
$$
F_1(r_1,w_1(r_1))=0 \quad \mbox{or} \quad w_1(r_1)=0,
$$
where the function $F_1: (0,+\infty) \times \R_+ \to [0,+\infty]$ is given by
$$
F_1(r_1,y)=\int_{|R-r_1|}^{R+r_1} \frac{h_1(r_1,r_2)}{y+w_2(r_2)} \,  dr_2 - 1.
$$
Note that the function $h_1$ is continuous on $D_1$ and that we have for all $(r_1,r_2) \in D_1$,
\begin{align*}
    &|h_1(r_1,r_2)| \le \frac{M_\rho r_2}{2r_1R} .
\end{align*}
By dominated convergence, $F_1$ is therefore locally bounded and continuous on $(0,+\infty) \times (0,+\infty)$. 

\medskip

Recall that $0 \le w_a(r_a) \le M_\rho$ for a.a. $r_a \in (0,+\infty)$. We therefore have for all $r_1,r_1',y,z \in (0,+\infty)$,
\begin{align*}
    \left| F_1(r_1,y)-F_1(r_1',z) \right| & \le \left| F_1(r_1,y)-F_1(r_1',y) \right| +\left| F_1(r_1',y)-F_1(r_1',z) \right| \\
    &\le \frac{2M_\rho}{\min(y,z)} |r_1-r_1'| + \frac{M_\rho}{yz} |y-z|,
\end{align*}
which proves that $F_1$ is locally Lipschitz on the open set  $(0,+\infty) \times (0,+\infty)$. It follows from \eqref{eq:pre_IFT} and the implicit function theorem for Lipschitz functions (see Theorem~\ref{thm:IFT_Lip} in Appendix B) that if $w_1^+(r_{1*}) > 0$ for some $r_{1*} > 0$, then $w_1^+$ is Lipschitz in the neighborhood of $r_{1*}$. Let $r_{1*} \in (0,+\infty)$ be such that $w_1^+(r_{1*})=0$, and $(r_{1,n}) \in (0,+\infty)^\N$ a sequence converging to $r_{1*}$. We have for all $n \in \N$
$$
F_1(r_{1,n},w_1^+(r_{1,n}))=\int_{|R-r_{1,n}|}^{R+r_{1,n}} \frac{h_1(r_{1,n},r_2)}{w_1^+(r_{1,n})+w_2(r_2)} \,  dr_2 - 1 = 0.
$$
Assume that we can extract from $(r_{1,n})_{n \in \N}$ a subsequence $(r_{1,n_k})_{k \in \N}$ with positive limit, i.e., such that $w_1^+(r_{1,n_k}) \to y_* > 0$ as $k$ tends to infinity. Passing to the limit by the dominated convergence theorem, we get
$$
F_1(r_{1*},y_*)=\int_{|R-r_{1*}|}^{R+r_{1*}} \frac{h_1(r_{1*},r_2)}{y_*+w_2(r_2)} \,  dr_2 - 1=0 . 
$$
Hence, $y_*=w_1^+(r_{1*})=0$. We reach a contradiction. 
This proves that $w_1^+$ is continuous on $(0,+\infty)$. The same holds for $w_2^+$ by symmetry. This completes the proof of the first statement.

\medskip

Let us now establish the second one. If the $w_a$'s are bounded away from zero on every compact subset of $[0,+\infty)$, then $w_a=w_a^+$ almost everywhere on $[0,+\infty)$, which, in view of the first statement, implies that the $w_a$'s are Lipschitz on $(0,+\infty)$ and that for all $r_1,r_2 > 0$,
$$
F_1(r_1,w_1(r_1))=0 \quad \mbox{and} \quad F_2(r_2,w_2(r_2))=0
$$
where
\begin{align*}
F_1(r_1,y) &= \frac{1}{4\pi} \int_{\S^2} \frac{\rho(\bR_1+r_1 \bm\sigma)}{y+w_2(|r_1\bm\sigma-R\be|)} \, d\bm\sigma - 1,   \\
F_2(r_2,y) &= \frac{1}{4\pi} \int_{\S^2} \frac{\rho(\bR_2+r_2 \bm\sigma)}{w_1(|r_2\bm\sigma+R\be|)+y} \, d\bm\sigma - 1.
\end{align*}
By a simple continuity argument, we deduce from the fact that the $w_j$'s are bounded away from zero on $[0,1]$ that $w_j$ is Lipschitz on the whole interval $[0,+\infty)$.
If $\rho$ is $C^1$ away from the centers $\bR_j$, then the functions $F_j:[0,+\infty) \times (0,+\infty) \to \R$ are $C^1$ on $((0,R)\cup (R,+\infty)) \times (0,+\infty)$. By the implicit function theorem for $C^1$ functions, the functions $w_j$ are $C^1$ on $(0,R)\cup (R,+\infty)$
(we use \eqref{eq:pre_IFT} and its analogue for $w_2$ to show that the assumptions of the implicit function theorem are satisfied). By a simple bootstrap argument, if $\rho$ is $C^k$ away from the centers $\bR_j$, then the functions $w_j$ are $C^k$ on $(0,R)\cup (R,+\infty)$.

\section*{Ackowledgements}

R.B. thanks Michael Herbst, Emmanuel Giner and Laurent Vidal for useful discussions and technical help. The authors are also very grateful to Antoine Levitt and Alston Misquitta for stimulating discussions. This publication is part of a project that has received funding from the European Research Council (ERC) under the European Union’s Horizon 2020 Research and Innovation Programme (Grant Agreement $n^{\circ}$ 810367).

\section*{Appendix A: Non-uniqueness of GISA fixed-points}
\label{appendix:GISA_dependence_initial_guess}

Since GISA is covered by the framework~\eqref{eq:ISAopt}, uniqueness of a minimizer, as stated in Theorem~\ref{thm:WPcontPb} for ISA and L-ISA, can not be guaranteed. 
We have therefore investigated the uniqueness of minimizers and have observed numerically a sensitivity of the GISA solution (obtained by the fixed-point iterations described in Section \ref{sec:GISA}) to the initial guess that we report in this Appendix. 
More specifically, for a (non-dimensional) test density sum of two normalized Gaussian functions:
\begin{equation}
\rho(\mathbf{r})= \left(\frac{\alpha_1}{\pi}\right)^{\frac{3}{2}} e^{-\alpha_1 |\mathbf{r}-\mathbf{R_1}|^2} + \left(\frac{\alpha_2}{\pi}\right)^{\frac{3}{2}} e^{-\alpha_2 |\mathbf{r}-\mathbf{R_2}|^2}
\label{eq:test_density_GISA_sum_2_gaussians}
\end{equation} 
with $\alpha_1=0.1$, $\alpha_2=0.5$, $|\bR_1-\bR_2|=1.131$, using six shells on each atom in the GISA pro-atomic densities variational space, and Gaussian exponents 
$$\alpha_{1,k} \in \left\lbrace 0.01,0.1,1,2,5,10 \right\rbrace, \quad  \alpha_{2,k} \in \left\lbrace0.05,0.5,2,4,10,50 \right\rbrace, 
$$
we found two local minima $\rho_a^{0,*}$ and $\rho_a^{0,**}$, whose profiles $r \mapsto \log\left(4 \pi r^2 \rho_a^0(r) \right)$ are represented in Figure \ref{fig:GISA_local_global_minimum}. The expected solution, corresponding to one unit charge on each atom was obtained e.g. with the initial guess:
\begin{equation}\label{eq:initial_guess_1}
\left(c_{1,k}^{(0),*}\right)_{k=1..6} = ( 0,0,0,1,0,0 ), \quad \left( c_{2,k}^{(0),*}\right)_{k=1..6} = (0,0,0,0,1,0 ),
\end{equation}
(\textit{i.e.} with weights initially on more compact Gaussians than the expected solutions), while a more balanced initial guess:
\begin{equation}\label{eq:initial_guess_2}
\left(c_{1,k}^{(0),**}\right)_{k=1..6} = \left( \frac{1}{6},\frac{1}{6},\frac{1}{6},\frac{1}{6},\frac{1}{6},\frac{1}{6} \right), \quad  \left( c_{2,k}^{(0),**}\right)_{k=1..6} = \left(\frac{1}{6},\frac{1}{6},\frac{1}{6},\frac{1}{6},\frac{1}{6},\frac{1}{6} \right)
\end{equation}
leads to a fixed-point associated to slightly different pro-atomic densities (see Figure \ref{fig:GISA_local_global_minimum}, right panel) and slightly distorted atomic charges and dipoles, see Table \ref{tab:local_global_minimum_GISA_atomic_charges_dipoles}.

\FloatBarrier
\begin{table}[!htp]
    \centering
\begin{tabular}{|>{\centering}m{3.cm}|>{\centering}m{4.cm}|>{\centering}m{4.cm}|}
    \hline
     & Fixed point 1 & Fixed point 2 \tabularnewline
     & (expected solution) & 
     \tabularnewline
     \hline
     \hline
    Charge $q_1$ & 1.000 &  0.977 \tabularnewline
    \hline
    Charge $q_2$ & 1.000 &  1.023 \tabularnewline
    \hline
    Dipole $d_z^{1}$ & 0.000 &  0.020 \tabularnewline
    \hline
    Dipole $d_z^{2}$ & 0.000 &  0.006 \tabularnewline
    \hline
       \end{tabular}
    \caption{Local charges and dipoles (component along the $z$ axis) on the two atoms computed by GISA from two different initial guesses \eqref{eq:initial_guess_1} and \eqref{eq:initial_guess_2}.}
    \label{tab:local_global_minimum_GISA_atomic_charges_dipoles}
\end{table}
\FloatBarrier

\FloatBarrier
\begin{figure}[htp]
  \centering
  \begin{tabular}{cc}
    \includegraphics[width=70mm]{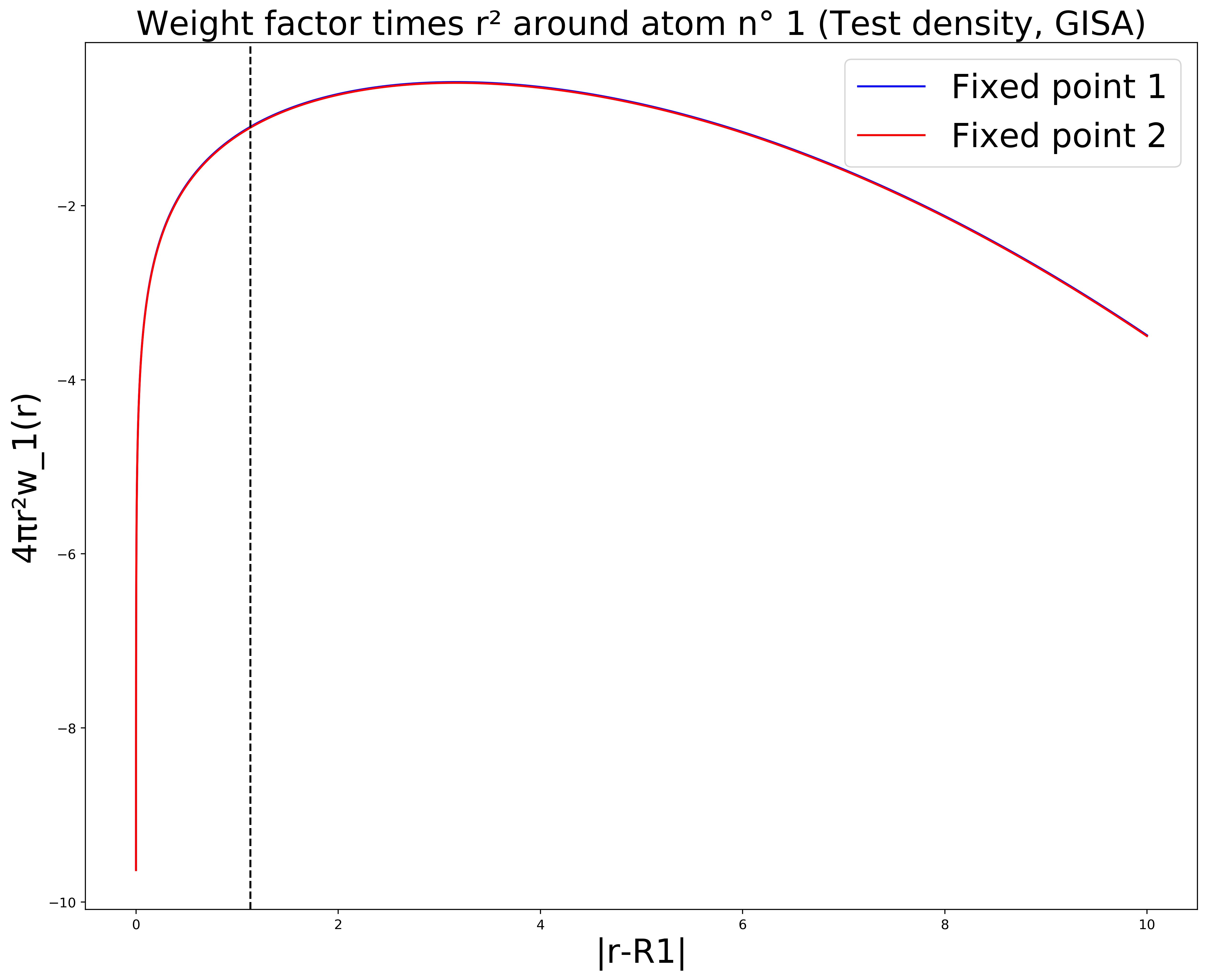}&
    \includegraphics[width=70mm]{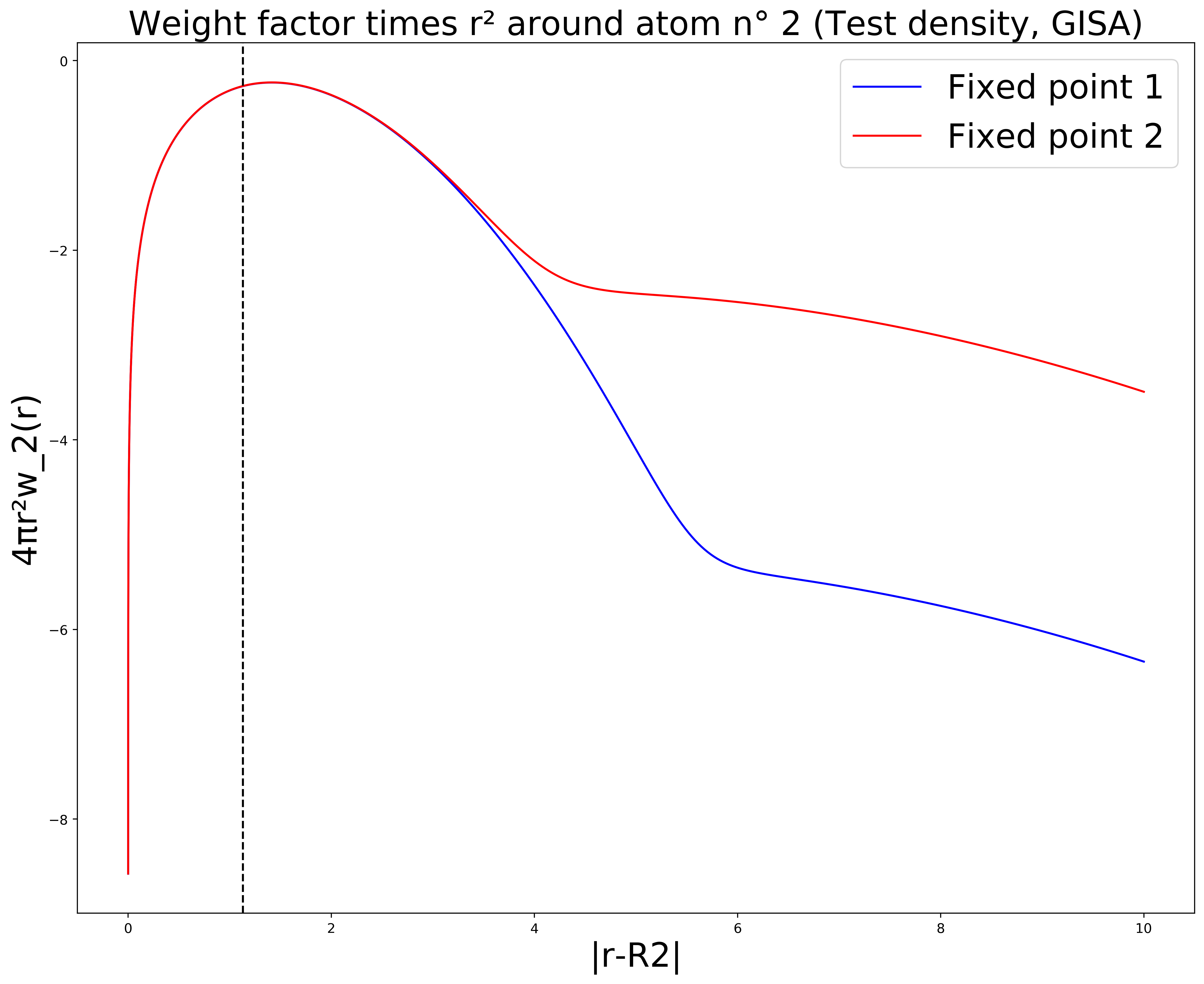}\\
  \end{tabular}
  \caption{Pro-atomic density profiles $r \mapsto \log\left(4 \pi r^2 \rho_a^{0,*}(r) \right)$ and $r \longmapsto \log\left(4 \pi r^2 \rho_a^{0,**}(r) \right)$ associated to the two unveiled fixed points (the expected one, and another one) of the GISA algorithm.}
  \label{fig:GISA_local_global_minimum}
\end{figure}
\FloatBarrier

\section*{Appendix B. Implicit function theorem for Lipschitz functions}

\begin{theorem}\label{thm:IFT_Lip}[\cite{wuertz2008implicit}] Let $U$ and $V$ open subsets of $\R^m$ and $\R^n$ respectively, $F \in C^{0,1}(U \times V;\R^n)$, and $(a,b) \in U \times V$ such that $F(a,b)=0$. Assume that there exists $K > 0$ such that
$$
\forall (x,(y_1,y_2)) \in U \times (V \times V), \quad |F(x,y_1)-F(x,y_2)| \ge K |y_1-y_2|.
$$
Then, there exists an open neighborhood $\widetilde U$ of $a$ in $U$ and a Lipschitz function $\phi:\widetilde U \to  V$ such that $\phi(a)=b$ and 
$$
\{(x,y) \in \widetilde U \times V \; | \; F(x,y)=0\} = \{(x,\phi(x)), \, x \in \widetilde U\}.
$$
In particular, $F(x,\phi(x))=0$ for all $x \in \widetilde U$.
\end{theorem}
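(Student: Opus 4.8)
The plan is to reduce the statement to Brouwer's invariance of domain theorem, applied not to $F$ directly but to the augmented map $G : U \times V \to \R^m \times \R^n$, $G(x,y) := (x, F(x,y))$. First I would extract the consequences of the coercivity hypothesis: for each fixed $x$, the bound $|F(x,y_1) - F(x,y_2)| \ge K |y_1 - y_2|$ shows that $y \mapsto F(x,y)$ is injective, and hence $G$ is injective on all of $U \times V$ (if $G(x_1,y_1) = G(x_2,y_2)$ then $x_1 = x_2$, and then $y_1 = y_2$ by the bound). Since $U \times V$ is open in $\R^{m+n}$ and $G$ is continuous and injective, invariance of domain gives that $G(U \times V)$ is open in $\R^{m+n}$ and that $G$ is a homeomorphism onto its image.

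Next, since $G(a,b) = (a,0)$ lies in the open set $G(U\times V)$, there is $\delta > 0$ with $\{x : |x-a| < \delta\} \times \{0\} \subset G(U\times V)$. For each such $x$ there is therefore a point $(x,y) \in U \times V$ with $F(x,y) = 0$, unique by injectivity of $G$; I define $\phi(x)$ to be this $y$, equivalently the second component of $G^{-1}(x,0)$, on the neighbourhood $B_\delta(a)$ of $a$. Continuity of $\phi$ follows from continuity of $G^{-1}$; one has $\phi(a) = b$ because $b$ is the unique zero of $F(a,\cdot)$; and the graph identity $\{(x,y) \in \widetilde U \times V : F(x,y) = 0\} = \{(x,\phi(x)) : x \in \widetilde U\}$ is once more a restatement of injectivity of $G$, valid for any ball $\widetilde U \subset B_\delta(a)$ on which $\phi$ is defined.

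It remains to upgrade $\phi$ to a Lipschitz function after possibly shrinking $\widetilde U$. Using $\phi(a) = b$ and continuity, I would pass to $\widetilde U := B_{\delta'}(a)$ small enough that $\overline{B_{\delta'}(a)} \subset U$ and $\phi(\widetilde U) \subset \overline{B_r(b)} \subset V$; on the compact convex set $\overline{B_{\delta'}(a)} \times \overline{B_r(b)}$ the locally Lipschitz map $F$ admits a single Lipschitz constant $L$. Then for $x_1, x_2 \in \widetilde U$ the coercivity bound together with $F(x_i,\phi(x_i)) = 0$ gives
$$
K\,|\phi(x_1) - \phi(x_2)| \le |F(x_1,\phi(x_1)) - F(x_1,\phi(x_2))| = |F(x_1,\phi(x_2)) - F(x_2,\phi(x_2))| \le L\,|x_1 - x_2|,
$$
so $\phi$ is Lipschitz with constant $L/K$, which completes the argument. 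The one genuinely non-elementary ingredient — and the main obstacle — is the \emph{existence} of a zero $y$ of $F(x,\cdot)$ for $x$ near $a$, i.e.\ a quantitative open-mapping property of $F$ with no differentiability available; this is exactly what invariance of domain supplies once one works with the augmented map $G$. (As an alternative route to existence, one can use Brouwer degree: the coercivity bound keeps $0$ off $F(a,\cdot)$ on the sphere $\partial B_r(b)$ and, since $F(a,\cdot)$ is injective, forces $\deg(F(a,\cdot), B_r(b), 0) = \pm 1$; uniform continuity of $F$ in $x$ plus homotopy invariance of the degree then transfers a nonzero degree, hence a solution, to all nearby $x$.) Everything else — uniqueness, continuity, and the Lipschitz bound — follows directly from the coercivity inequality and the local Lipschitz regularity of $F$.
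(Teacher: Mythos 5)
Your proof is correct. Note, however, that the paper itself gives no proof of this statement: Theorem~\ref{thm:IFT_Lip} is stated in Appendix~B purely as a citation to the literature (the reference \cite{wuertz2008implicit}), so there is nothing in the paper to compare your argument against; it must be judged on its own. On those terms it holds up. The decomposition into (i) injectivity of the augmented map $G(x,y)=(x,F(x,y))$ from the coercivity bound, (ii) invariance of domain to get that $G$ is an open map and a homeomorphism onto its image, hence existence, uniqueness and continuity of $\phi(x)=\pi_2(G^{-1}(x,0))$ near $a$, and (iii) the elementary chain $K\,|\phi(x_1)-\phi(x_2)|\le |F(x_1,\phi(x_2))-F(x_2,\phi(x_2))|\le L\,|x_1-x_2|$ on a compact product neighbourhood to upgrade continuity to the Lipschitz bound with constant $L/K$, is exactly the right architecture. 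You correctly identify that the only genuinely non-elementary step is the existence of a zero of $F(x,\cdot)$ for $x$ near $a$, which in the absence of differentiability requires a topological input (invariance of domain, or equivalently the degree argument you sketch); the order of your steps also matters and is right: you establish continuity of $\phi$ from $G^{-1}$ \emph{before} using it to confine $\phi(\widetilde U)$ to a compact subset of $V$ on which $F$ has a uniform Lipschitz constant, avoiding the circularity that would arise if one tried to get continuity from the Lipschitz estimate directly. The only cosmetic caveat is the meaning of $C^{0,1}$: if it is read as globally Lipschitz on $U\times V$ the compactness step is unnecessary, and if read as locally Lipschitz your covering argument on the compact convex set supplies the constant $L$; either reading is covered by what you wrote.
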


 \bibliographystyle{plain}
\bibliography{biblio}

\end{document}